\newcommand{\cT}{{\mathfrak T}}
\newcommand{\reff}[1]{(\ref{#1})}
\theoremstyle{plain}
\newtheorem{theo}{Theorem}[section]
\newtheorem{cor}[theo]{Corollary}
\newtheorem{prop}[theo]{Proposition}
 \newtheorem{proposition}[theo]{Proposition}
\newtheorem{lem}[theo]{Lemma}
 \newtheorem{lemma}[theo]{Lemma}
\theoremstyle{remark}
\newtheorem{rem}[theo]{Remark}
 \newtheorem{remark}[theo]{Remark}
 \newtheorem{exple}[theo]{Example}
\newcommand{\cb}{{\mathcal B}}
\newcommand{\cf}{{\mathcal F}}
\newcommand{\cg}{{\mathcal G}}
\newcommand{\ch}{{\mathcal H}}
\newcommand{\ck}{{\mathcal K}}
\newcommand{\cn}{{\mathcal N}}
\newcommand{\cm}{{\mathcal M}}
\newcommand{\cp}{{\mathcal P}}
\newcommand{\cq}{{\mathcal Q}}
\newcommand{\cs}{{\mathcal S}}
\newcommand{\ct}{{\mathcal T}}
\newcommand{\cx}{{\mathcal X}}
\newcommand{\cz}{{\mathcal Z}}
\newcommand{\DD}{{\mathbb D}}
\newcommand{\E}{{\mathbb E}}
\newcommand{\F}{{\mathbb F}}
\newcommand{\N}{{\mathbb N}}
\renewcommand{\P}{{\mathbb P}}
\newcommand{\R}{{\mathbb R}}
\newcommand{\T}{{\mathbb T}}
\newcommand{\Z}{{\mathbb Z}}
\newcommand{\rP}{{\rm P}}
\newcommand{\rE}{{\rm E}}
\newcommand{\bm}{\mathbf m}
\newcommand{\bN}{{\mathbf N}}
\newcommand{\ind}{{\bf 1}}
\newcommand{\Br}{{\rm Br}}
\newcommand{\Lf}{{\rm Lf}}
\newcommand{\Sk}{{\rm Sk}}
\newcommand{\Card}{{\rm Card}\;}
\newcommand{\inv}[1]{\mathop{\frac{1}{ #1}}\nolimits}
\newcommand{\expp}[1]{\mathop {\mathrm{e}^{ #1}}}
 \def\beqlb{\begin{eqnarray}}\def\eeqlb{\end{eqnarray}}
 \def\beqnn{\begin{eqnarray*}}\def\eeqnn{\end{eqnarray*}}
 \def\mbb{\mathbb}
\newcommand{\bcen}{\begin{center}}
\newcommand{\ecen}{\end{center}}
\newcommand{\bgeqn}{\begin{equation}}
\newcommand{\edeqn}{\end{equation}}
\def\dz{\delta}
\def\lz{\lambda}
\def\F{{\mathcal F}}
\def\G{{\mathcal G}}
\def\P{{\mathcal  P}}
\def\cn{{\mathcal  N}}
\def\ct{{\mathcal  T}}
\def\Z{{\mathcal  Z}}
\def\mN{{\mbb  N}}
\def\N{{\mbb  N}}
\def\E{{\mbb  E}}
\def\P{{\mbb  P}}
\def\R{{\mbb  R}}
\def\rar{\rightarrow}
\def\llb{\llbracket}
\def\rrb{\rrbracket}
\newcommand{\lb}{\llbracket}
\newcommand{\rb}{\rrbracket}
\begin{document}

\title[CRT-sub-trees]{Pruning of CRT-sub-trees}
\date{\today}

\author{Romain Abraham}

\address{
Romain Abraham,
MAPMO, CNRS UMR 6628,
F\'ed\'eration Denis Poisson FR 2964,
Universit\'e d'Orl\'eans,
B.P. 6759,
45067 Orl\'eans cedex 2
FRANCE.
}

\email{romain.abraham@univ-orleans.fr}

\author{Jean-Fran\c cois Delmas}

\address{
Jean-Fran\c cois Delmas,
CERMICS,  Universit\'e Paris-Est, 6-8
av. Blaise Pascal,
  Champs-sur-Marne, 77455 Marne La Vall\'ee, FRANCE.}

\email{delmas@cermics.enpc.fr}

\author{Hui He}

\address{
Hui He, Laboratory of Mathematics and Complex Systems, School of Mathematical
Sciences, Beijing Normal University, Beijing 100875, P.R.CHINA. }

\email{hehui@bnu.edu.cn}

\thanks{This work is partially supported by the ``Agence Nationale de la
  Recherche'',  ANR-BLAN06-3-146282  and ANR-08-BLAN-0190, and by NSFC
  (No. 11126037, 11201030).}

\begin{abstract}
We  study  the  pruning  process  developed  by Abraham and Delmas (2012)
on the discrete Galton-Watson sub-trees of the L\'{e}vy tree which are  obtained by
considering the  minimal sub-tree connecting  the root and  leaves chosen
uniformly at rate $\lambda$,  see  Duquesne  and  Le  Gall (2002). 
The  tree-valued process, as  $\lambda$ increases, has
been  studied by  Duquesne and  Winkel (2007). 
  Notice that we have  a tree-valued  process indexed  by two
parameters the pruning parameter $\theta$  and the intensity $\lambda$.
Our  main  results are:  construction and
marginals   of   the   pruning   process, 
representation  of the  pruning  process  (forward in  time  that is  as
$\theta$ increases) and description of the growing process (backward in time
that  is as  $\theta$  decreases) and distribution of the ascension time
(or explosion time of the backward process) as well as  the tree
at the ascension time. A by-product of our result is that the
super-critical L\'{e}vy trees independently introduced by Abraham and Delmas (2012)
and Duquesne and  Winkel (2007) coincide. This work is also related to
the pruning of discrete Galton-Watson trees studied by Abraham, Delmas
and He (2012).
\end{abstract}

\keywords{Pruning, branching process,
Galton-Watson process, random tree, CRT, tree-valued process, Girsanov transformation}

\subjclass[2000]{05C05, 
60J80, 60J27}

\maketitle

\section{Introduction}

The study of pruning of Galton-Watson trees has been initiated by Aldous
and Pitman  \cite{ap:tvmcdgwp}. Roughly  speaking, it corresponds  to the
percolation on edges:  an edge is uniformly chosen at random in the Galton-Watson
tree and it  is removed and only the  connected component containing the
root remains.   This procedure  is then iterated.   This process  can be
extended  backward in  time.  It  corresponds then  to  a non-decreasing
tree-valued process. The  ascension time $A$, is then  the first time at
which   this  tree-valued   process   reaches  an   unbounded  tree.    In
\cite{ap:tvmcdgwp}, the  authors give the  joint distribution of  $A$ as
well as the tree just before the ascension time (in backward time).  The
limits of Galton-Watson trees are  the so called continuum L\'{e}vy trees, see
\cite{a:crt1,a:crt3,lglj:bplpep,dlg:pfalt};  they are characterized  by a
branching mechanism $\psi$ which is also a L\'{e}vy exponent. The result for
the pruning process on Galton-Watson  trees was then extended by Abraham
and  Delmas  \cite{ad:ctvmp} to  a process indexed by time $\theta$
whose marginals are continuum L\'{e}vy  trees. In  the
setting of  the Brownian continuum  random tree, which corresponds  to a
quadratic branching  mechanism, the pruning procedure is  uniform on the
skeleton, see  also Aldous and Pitman \cite{ap:sac}  for a fragmentation
point   of   view   in   this   case.    This   is   the   analogue   of
\cite{ap:tvmcdgwp}. However  in the general  L\'{e}vy case, one has  to take
into account the  pruning of nodes with a rate given  by its ``size'' or
``mass'',  which is  defined as  the  asymptotic number  of small  trees
attached to the node.  This result in the continuous setting motivated a
new pruning  procedure on  the nodes of  Galton-Watson trees,  which was
developed by Abraham, Delmas  and He \cite{adh:pgwttvmp}.  In this case,
the pruning  happens on the nodes  with rate depending on  the degree of
the nodes.

In  the  present  work,  we  study  the  pruning  process  developed  in
\cite{ad:ctvmp} on the discrete Galton-Watson sub-tree of the L\'{e}vy tree.
The discrete  Galton-Watson sub-trees of  the L\'{e}vy trees are  obtained by
considering the  minimal sub-tree connecting  the root and  leaves chosen
uniformly  with  rate  $\lambda\geq   0$,  see  Duquesne  and  Le  Gall
\cite{dlg:pfalt}. The  tree-valued process, as  $\lambda$ increases, has
been  studied by  Duquesne and  Winkel \cite{dw:glt},  in  particular to
construct  super-critical L\'{e}vy  trees. Notice  that  super-critical L\'{e}vy
trees have also  been defined in \cite{ad:ctvmp}. One  of the by-product
of  our  results is  that  the  two  definitions coincide,  see  Section
\ref{SecsubCov}.   Notice that we have  a tree-valued  process indexed  by two
parameters $\theta$  (as in \cite{ap:tvmcdgwp,  ad:ctvmp}) and $\lambda$
(as  in \cite{dw:glt}).  The  other main  results are:  construction and
marginals   of   the   pruning   process  in   Section   \ref{SecsubMG},
representation  of the  pruning  process  (forward in  time  that is  as
$\theta$ increases) and description of the growing process (backward in time
that  is as  $\theta$  decreases) in  Section \ref{sec:subgrowth},  some
remarks  on martingales  related  to  the number  of  leaves in  Section
\ref{sec:leaves}, distributions of the ascension time and of the tree
at the ascension time in Section \ref{sec:ascension}.

Now, we present more precisely our results. Let $\psi$ be a branching
mechanism satisfying some regularity conditions (see (H1-3) in Section
\ref{sec:exc-meas}). We define $\psi_\theta$ by:
\[
\psi_\theta(q)=\psi(q+\theta) -\psi(\theta) \quad \text{for all $q\geq 0$},
\]
and set  $\Theta^\psi$ the  set of $\theta$  for which  $\psi_\theta$ is
well  defined.   We   consider  the  tree-valued  process  $(\ct_\theta,
\theta\in \Theta^\psi)$ introduced  in \cite{ad:ctvmp}, corresponding to
a uniform  pruning on the skeleton and  to a pruning at  nodes with rate
depending on its  size. We recall that $\ct_\theta$ is  a L\'{e}vy tree with
branching  mechanism $\psi_\theta$. Let  $\bm^{\ct_\theta}$ be  its mass
measure, which is a uniform measure on the set of leaves. Let $\tau_0(\lambda)$
be  the minimal sub-tree  of $\ct_0$  generated by  the root  and leaves
chosen  before  time $\lambda$  according  to  a  Poisson point  measure
$\cp^0$ on  $\R_+\times \ct_0$ with intensity  $dt \, \bm^{\ct_\theta}$.
Let     $M_\lambda$    be     the    number     of     chosen    leaves:
$M_\lambda=\cp^0([0,\lambda]\times \ct_0)$, so that $\tau_0(\lambda)$ is
well     defined    for     $M_\lambda\geq    1$.      And     we    set
$\tau_\theta(\lambda)=\ct_\theta     \bigcap     \tau_0(\lambda)$    for
$\theta\geq  0$.    So  we  get  a  two-parameter   family  of  sub-trees
$(\ct_\theta(\lambda),  \lambda\geq 0,  \theta\geq  0)$.  Let  $\P^{\psi,
  \lambda}$   be   the    conditional probability  given   the   event
$\{M_\lambda\geq 1\}$.  We will be interested in the process
$\lambda\mapsto \tau_\theta(\lambda)$ which was studied in
\cite{dw:glt} and in the pruning process $\theta\mapsto
\tau_\theta(\lambda)$, which for $\lambda=+\infty $ was studied in
\cite{ad:ctvmp}.

Notice  that the leaves of $\tau_\theta(\lambda)$
correspond to marked  leaves belonging to $\ct_\theta$ as  well as roots
of  sub-trees of $\ct_0$  with marked  leaves which  are removed  to get
$\ct_\theta$. If one is interested only in $\hat \tau_\theta(\lambda)$,
the minimal sub-tree containing the root and the marked leaves
belonging to $\ct_\theta$, then one would get a process such that
$\hat \tau_\theta(\lambda)$ has the same distribution as
$\tau_\theta(\lambda_\theta)$ with $\lambda_\theta=\psi(\psi_\theta^{-1}
(\lambda)))$. This would lead to another natural process index by the
level-set of the function $(\theta,\lambda)\mapsto
\psi_\theta(\psi^{-1}(\lambda))$.

Theorem  3.2.1  in  \cite{dlg:rtlpsbp}  in  the  sub-critical  case  and
Corollary \ref{cor:tau-law} in this paper  in the general case gives  that the sub-tree
$\tau_{\theta}(\lz)$ is distributed as a Galton-Watson real tree; its
reproduction    law    has    generating   function    $g_{(\psi_\theta,
  \psi_\theta(\eta))}$,    see     definition    \reff{eq:def-g},    with
$\eta=\psi^{-1}(\lambda)$  and   exponential  individual  lifetime  with
parameter $ \psi_\theta'(\eta)$.  If we endow $\ct_\theta$ with its mass
measure and $\tau_\theta(\lambda)$ with  a discrete mass measure defined
by
\[
\bm^{\tau_\theta(\lambda)}=\inv{\psi_\theta(\eta)} \sum_{\text{$x$ a
    leaf of $\tau_\theta(\lambda)$}} \delta_x,
\]
then we have in  Theorem \ref{theo:cv-dGHPc} the
convergence  for  the   Gromov-Hausdorff-Prohorov  distance  defined  in
\cite{adh:ghptwms} of $\tau_{\theta}(\lz)$  to $\ct_\theta$ as $\lambda$
goes to infinity. This result  was already in \cite{dw:glt} (with the Gromov-Hausdorff distance instead of the Gromov-Hausdorff-Prohorov distance),  and this  insures  that  in  the
super-critical  case the  L\'{e}vy trees  introduced in  \cite{dw:glt}  and in
\cite{ad:ctvmp} are the same. We give in Theorems \ref{theo:pruning} and
\ref{theo:growth}    a    precise    description    of    the    process
$(\tau_\theta(\lambda),   \theta\geq   0)$   in   forward   (decreasing
tree-valued process) and backward (increasing tree-valued process) time.
By considering the backward process, we see it is possible to extend the
process up to $\theta_\lambda$ backward in time, with $\theta_\lambda$
defined roughly by $\psi_\theta(\psi^{-1}(\lambda))=0$ (see \reff{eq:def-q-l} for a
precise definition). Usually $\theta_\lambda$ is not the lower bound of
$\Theta^\psi$. Intuitively, when $\theta$ decreases, the tree grows and
in order to keep the right number of leaves, the intensity for choosing
them has to decrease; this can be done up to the lower bound
$\theta_\lambda$. 

By   considering   $L_\theta(\lambda)$   the   number   of   leaves   of
$\tau_\theta(\lambda)$,  we get   that
$\psi'(\theta)   L_\theta(\lambda)/    \psi_q(\psi^{-1}(\lambda))$   is   a   backward
martingale, see  Proposition \ref{prop:mart-l}. By taking the limit as $\lambda$ goes to infinity, and since
the    total    mass    of   $\bm^{\tau_\theta(\lambda)}$,    that    is
$L_\theta(\lambda)/  \psi_q(\psi^{-1}(\lambda))$,  converges  to  the  total  mass  of
$\bm^{\ct_\theta}$, say $\sigma_\theta$, we get in Proposition
\ref{prop:mart-s} that $\psi'(\theta)\sigma_\theta$ is also a backward
martingale. 

Then we consider the process $(\tau_\theta(\lambda),
\theta>\theta_\lambda)$ backward in time and consider its ascension time
 $A_\lambda$ defined in \reff{eq:def-Al} as the first time at which the
 tree $\tau_\theta(\lambda)$ is unbounded. Of course, this corresponds to
 the ascension time of $(\ct_\theta, \theta\in \Theta^\psi)$ when it is
 larger than $\theta_\lambda$. We give in Proposition \ref{prop:S-SL}
 the distribution of  $(\tau_\theta(\lambda), \theta\geq
 A_\lambda)$ and identify it in Proposition \ref{disequ} using the
 pruning of a tree $\ct^*_0(\lambda)$ with an infinite spine defined in Sections
 \ref{sec:infinite-CRT} and \ref{sec:dist-infinite-CRT}. We also prove
 the convergence, as $\lambda$ goes to infinity of the  tree $\ct^*_0(\lambda)$ 
toward the CRT $\ct^*_0$ with infinite spine introduced in
\cite{ad:ctvmp}. The latter can be seen as a sub-tree of L\'{e}vy trees
with immigration, see \cite{cw:ggwt} for further work in this direction.

\section{L\'evy trees and the forest obtained by pruning}
\label{sec:levy}
\subsection{Notations}

Let $(E,d)$ be a metric  Polish space.  We denote by $\cm_{f}(E)$ (resp.
$\cm_f^{\text{loc}}(E)$) the space of all finite (resp.  locally finite)
Borel measures  on $E$.  For $x\in E$,  let $\delta_x$ denote  the Dirac
measure  at point  $x$.  For $\mu\in \cm_f^{\text{loc}}(E)$  and $f$  a
non-negative measurable function, we set $\langle \mu,f \rangle =\int
f(x) \, \mu(dx)= \mu(f)$. 

\subsection{Real trees}

We refer to \cite{e:prt} or  \cite{lg:rta} for a general presentation of
random  real trees.  Informally,  real trees  are metric  spaces without
loops, locally  isometric to  the real line.   More precisely,  a metric
space $(T,d)$ is a real tree if the following properties are satisfied:
\begin{enumerate}
	\item For every $s,t\in T$, there is a unique isometric map $f_{s,t}$
from $[0,d(s,t)]$ to $T$ such that $f_{s,t}(0)=s$ and $f_{s,t}(d(s,t))=t$.
	\item For every $s,t\in T$, if $q$ is a continuous injective map from
$[0,1]$ to $T$ such that $q(0)=s$ and $q(1)=t$, then
$q([0,1])=f_{s,t}([0,d(s,t)])$.
\end{enumerate}
If $s,t\in T$, we will note $\llbracket s,t \rrbracket$
the range of the
isometric map  $f_{s,t}$ described above and  $\llbracket s,t
\llbracket$  for $\llbracket s,t \rrbracket \backslash \{t\}$. 

We say that $(T,d,\emptyset)$ is a rooted real tree with root
$\emptyset$ if $(T,d)$ is a real tree and $\emptyset\in T$ is a
distinguished vertex.

Let $(T,d,\emptyset)$ be a rooted real tree.  The degree $n(x)$ of $x\in
T$ is  the number of  connected components of $T\setminus\{x\}$  and the
number of children of $x\neq  \emptyset$ is $\kappa_x=n(x)-1$ and of the
root is  $\kappa_\emptyset=n(\emptyset)$.  We shall consider  the set of
leaves $ \Lf (T)=\{x\in T,\,  \kappa_x=0\}$, the set of branching points
$\Br(T)=\{x\in T, \, \kappa_x\geq 2\}$ and the set of infinite branching
points $\Br_\infty(T) =  \{ x\in T,\, \kappa_x = \infty \}  $.  We say
that a
tree is  discrete if $\{x\in \Lf(T)\cup  \Br(T); d(\emptyset,x)\leq a\}$
is finite for all  $a$. The skeleton of $T$ is the  set of points in the
tree that aren't leaves: $\Sk(T)=T\backslash \Lf (T)$.  The trace of the
Borel $\sigma$-field of  $T$ restricted to $\Sk(T)$ is  generated by the
sets  $\llbracket  s,s'  \rrbracket$;  $s,s' \in  \Sk(T)$.   One
defines  uniquely a  $\sigma$-finite  Borel measure  $\ell^{T}$ on  $T$,
called the length measure of $T$, such that:
\[
\ell^{T}(\Lf (T)) = 0
\quad\text{and}\quad
 \ell^{T}(\llbracket s,s'
\rrbracket)=d(s,s').
\]

For every $x\in T$, $\lb\emptyset ,x\rb$ is
interpreted as the ancestral line of vertex $x$ in the tree. We define
a partial order on $T$ by setting $x\preccurlyeq y$ ($x$ is an
ancestor of $y$) if $x\in\lb \emptyset,y\rb$.
If $x,y\in T$, there exists a unique $z\in T$, called the Most Recent
Common Ancestor (MRCA) of $x$ and $y$,  such that $\lb
\emptyset,x\rb\cap\lb\emptyset,y\rb=\lb\emptyset,z\rb$, and we write
$z=x\wedge y$.

\subsection{Measured rooted real trees}
According to \cite{adh:ghptwms}, one can define a
Gromov-Hausdorff-Prohorov metric on the space of rooted measured metric
space as follows.

\par Let $(X,d)$ be a Polish metric space. For $A,B\in \cb(X)$, we set:
\[
 d_\text{H}(A,B)= \inf \{ \varepsilon >0,\ A\subset B^\varepsilon\
 \mathrm{and}\ B\subset
A^\varepsilon \},
\]
the Hausdorff distance between $A$ and $B$, where $A^\varepsilon = \{ x\in X,
\inf_{y\in A} d(x,y) < \varepsilon\}$ is the $\varepsilon$-halo set of $A$.
If $\mu,\nu \in \cm_f(X)$, we set:
\[
 d_\text{P}(\mu,\nu) = \inf \{ \varepsilon >0,\ \mu(A)\le \nu(A^\varepsilon) +
 \varepsilon
\text{ and }
\nu(A)\le \mu(A^\varepsilon)+\varepsilon\
\text{ for all closed set } A \},
\]
the Prohorov distance between $\mu$ and $\nu$.

A rooted measured  metric space $\cx = (X,d,  \emptyset,\mu)$ is a
metric space $(X , d)$ with a distinguished element $\emptyset\in X$
and  a locally  finite Borel  measure  $\mu\in \cm_f^{\text{loc}}(E)$.
Two  rooted measured  metric spaces  $\cx=(X,d,\emptyset,\mu)$ and
$\cx'=(X',d',\emptyset',\mu')$  are  said  GHP-isometric  if
there exists  an isometric bijection  $\Phi:X \rightarrow X'$  such that
$\Phi(\emptyset)= \emptyset'$ and $\Phi_* \mu = \mu'$, where
$\Phi_* \mu$ is the measure $\mu$ transported by $\Phi$.

Let $\cx=(X,d,\emptyset,\mu)$ and $\cx'=(X',d',\emptyset',\mu')$ be two compact
rooted measured metric spaces, and define:
\[
 d_{\text{GHP}}^c(\cx,\cx') = \inf_{\Phi,\Phi',Z} \left(
 d_\text{H}^Z(\Phi(X),\Phi'(X')) +
d^Z(\Phi(\emptyset),\Phi'(\emptyset')) + d_\text{P}^Z(\Phi_* \mu,\Phi_*'
\mu') \right),
\]
where the infimum is taken over all isometric embeddings $\Phi:X\hookrightarrow
Z$ and $\Phi':X'\hookrightarrow Z$ into some common Polish metric space
$(Z,d^Z)$.

If $\cx=(X,d,\emptyset,\mu)$ is a rooted measured metric space, then for
$r\geq 0$  we will consider  its restriction to  the ball of  radius $r$
centered   at  $\emptyset$,  $\cx^{(r)}=(X^{(r)},   d^{(r)},  \emptyset,
\mu^{(r)})$, where
\[
X^{(r)}=\{x\in X; d(\emptyset,x)\leq r\},
\]
the metric $d^{(r)}$ is the restriction of $d$ to $X^{(r)}$, and the measure
$\mu^{(r)}(dx)=\ind_{X^{(r)}} (x)\; \mu(dx)$ is the restriction of $\mu$ to
$X^{(r)}$.

We will  denote by  $\T$ the set  of (GHP-isometry classes  of) measured
rooted real trees $(T, d, \emptyset, \bm)$ where $(T,d, \emptyset)$ is a
locally compact  rooted real tree and $  \bm\in\cm_f^\text{loc}(T)$ is a
locally  finite measure  on $T$.   Sometimes,  we will  write $(T,  d^T,
\emptyset^T,  \bm^T)$ for  $(T,  d, \emptyset,  \bm)$  to stress  the
dependence in $T$. Sometimes, when there is no confusion, we will simply
write  $T$ for  $(T, d,  \emptyset,  \bm)$ and  $\tilde T$  for $(T,  d,
\emptyset)$.   We define  the following  function on  $\T^2$,  for $T_1,
T_2\in \T$:
\[
 d_{\text{GHP}}(T_1,T_2) = \int_0^\infty \expp{-r} \left(1 \wedge
d^c_{\text{GHP}}\left(T_1^{(r)},T_2^{(r)}\right)
\right) \ dr.
\]
According   to  Corollary  2.8   in  \cite{adh:etiltvp},   the  function
$d_{\text{GHP}}$ is well defined  and $(\T, d_{\text{GHP}})$ is a Polish
metric space.

\subsection{Grafting procedure}
\label{sec:graft}
We  will define  in this  section a  procedure by  which we  add (graft)
measured  rooted  real  trees   on  an  existing  measured  rooted  real
trees. More  precisely, let $T$ be  a measured rooted real  tree and let
$((T_i,x_i),i\in  I)$ be  a finite  or countable  family of  elements of
$\T\times T$.   We define the real  tree obtained by  grafting the trees
$T_i$  on $T$  at  point  $x_i$. We  set  $\hat {T}  =  T \sqcup  \left(
  \bigsqcup_{i\in  I} T_i\backslash\{\emptyset^{T_i}\}  \right)  $ where
the symbol $\sqcup$  means that we choose for  the sets $(T_i)_{i\in I}$
representatives  of  GHP-isometry classes  in  $\T$  which are  disjoint
subsets of some common set and that we perform the disjoint union of all
these sets. We set $\emptyset^{\hat T}=\emptyset^T$. The set $\hat T$ is
endowed with the following metric $d^{\hat T}$: if $s,t\in \hat T$,
\begin{equation*}
d^{\hat  T} (s,t) =
\begin{cases}
d^T(s,t)\ & \text{if}\ s,t\in T, \\
d^T(s,x_i)+d^{T_i}(\emptyset^{T_i},t)\ & \text{if}\
s\in T,\ t\in T_i\backslash\{\emptyset^{T_i}\} , \\
d^{T_i}(s,t)\ & \text{if}\ s,t\in T_i\backslash\{\emptyset^{T_i}\} ,\\
d^T(x_i,x_j)+d^{T_j}(\emptyset^{T_j},s)+d^{T_i}
(\emptyset^{T_i},t)\
& \text{if}\ i\neq j \ \text{and}\ s\in T_j\backslash\{\emptyset^{T_j}\} ,\ t\in
T_i\backslash\{\emptyset^{T_i}\} .
\end{cases}
\end{equation*}
We define the mass measure on $\hat  T$ by:
\[
\mathbf{m}^{\hat  T}=\mathbf{m}^T+\sum_{i\in I}\left(\ind_{
 T_i\backslash\{\emptyset^{T_i}\}} \mathbf{m}^{T_i}+
\mathbf{m}^{T_i}(\{\emptyset^{T_i}\}) \delta_{x_i}\right).
\]
It is clear that the rooted metric
space $(\hat {T},d^{\hat  T},\emptyset^{\hat  T})$ is still a rooted complete
real tree. (Notice that it is not always true that $\hat {T}$ remains locally
compact  or that
$\mathbf{m}^{\hat  T}$ defines a
locally finite measure on $\hat  T$).
 We will use the following notation for the grafted tree:
\begin{equation}
T \circledast_{i\in I}(T_i,x_i) = (\hat  T,d^{\hat  T},\emptyset^{\hat  T},
\mathbf{m}^{\hat  T} ) ,
\label{def:gref}
\end{equation}
where we make the convention that $T \circledast_{i\in I}(T_i,x_i)=T$
for $I=\emptyset$. If $\varphi$ is an isometry from $T$ onto $T'$, then
$T \circledast_{i\in I}(T_i,x_i) $ and $T' \circledast_{i\in
  I}(T_i,\varphi(x_i)) $ are also isometric. Therefore, the grafting
procedure is well defined on $\T$.

In Section \ref{sec:GW}, we shall use the grafting procedure for rooted
real trees but without mass measure. Recall $\tilde T=(T, \emptyset^T,
d^T)$. We shall use the following
notation:
\begin{equation}
\tilde T \tilde  \circledast_{i\in
  I}(\tilde T_i,x_i) =(\hat  T,d^{\hat
  T},\emptyset^{\hat  T} ) , 
\label{def:gref-m}
\end{equation}
where we also make the convention that $\tilde T \tilde  \circledast_{i\in
  I}(\tilde T_i,x_i) =\tilde T  $ for $I=\emptyset$.

\subsection{Sub-trees above a given level}
 For $T\in \T$ we set $H_{\text{max}}(T)=\sup_{x\in T}
 d^T(\emptyset^T,x)$ the height of $T$ and for
$a\geq 0$:
\[
T^{(a)}=\{x\in T, \, d(\emptyset,x)\le a\}
\quad\mbox{and}\quad T(a)=\{x\in T, \, d(\emptyset,x)=a\}
\]
the restriction of the tree $T$ under level $a$ and the set of
vertices of $T$ at level $a$ respectively.   We denote by  $(T^{i,\circ},i\in I)$ the connected components of
$T\setminus  T^{(a)}$.   Let  $\emptyset_i$  be  the MRCA  of  all  the
vertices    of    $T^{i,\circ}$.    We    consider    the   real    tree
$T^i=T^{i,\circ}\cup\{\emptyset _i\}$ rooted at point $\emptyset_i$ with
mass  measure  $\bm^{T^i}$ defined  as  the  restriction  of $\bm^T$  to
$T^{i,\circ}$. Notice that $T=T^{(a)} \circledast_{i\in I}(T_i,\emptyset_i) $.
We will consider the point measure on $T\times \T$:
\begin{equation}
   \label{eq:def-cna}
\cn_a^T=\sum_{i\in I}\delta_{(\emptyset_i,T^i)}.
\end{equation}

\subsection{Excursion measure of a L\'evy tree}
\label{sec:exc-meas}
Let $\alpha\in \R$, $\beta\ge 0$ and $\pi$ be  a $\sigma$-finite measure
on      $(0,+\infty)$     such      that     $\int_{(0,+\infty)}(r\wedge
r^2)\pi(dr)<+\infty$.
The   branching mechanism $\psi$ with characteristic $(\alpha,\beta,
\pi)$ is
defined by:
\begin{equation}
   \label{eq:psi}
\psi(\lambda)=\alpha\lambda+\beta\lambda^2
+\int_{(0,+\infty)}\left(\expp{-\lambda
  r}-1+\lambda r\right)\pi(dr).
\end{equation}

We assume the following assumptions:
\begin{enumerate}
\item[(H1)]  The   branching  mechanism   $\psi$  is  conservative:   for  all
  $\varepsilon>0$,
\[
\int_0^\varepsilon \frac{d\lambda}{|\psi(\lambda)|}=+\infty.
\]
The  conservative assumption  is  equivalent to  the  finiteness of  the
corresponding CSBP at all time.
\item[(H2)] The Grey condition holds:
\begin{equation}
\int^{+\infty}\frac{d\lambda}{\psi(\lambda)}<+\infty.
\end{equation}
The  Grey  condition  is  equivalent  to the  a.s.   finiteness  of  the
extinction time  of the corresponding  CSBP. This assumption is  used to
ensure that the  corresponding L\'{e}vy tree is compact.
\item[(H3)] $\beta>0$ or $\int_{(0,1)} \ell \pi(d\ell)=+\infty $. This
  condition is equivalent to the fact that the L\'{e}vy process with index
  $\psi$ is of infinite variation (and the L\'{e}vy tree is not discrete).
 \end{enumerate}

Let $v$ be
the unique non-negative solution of the equation:
\[
\int_{v(a)}^{+\infty}\frac{d\lambda}{\psi(\lambda)}=a.
\]

Results  from  \cite{dlg:pfalt} in  the  (sub)critical  case, using  the
coding of compact real trees by  height function, can be extended to the
super-critical case,  see \cite{adh:etiltvp}. They can be  stated in the
following form.  There  exists a $\sigma$-finite measure $\N^\psi[d\ct]$
on $\T$,  or excursion  measure of a  L\'{e}vy tree, with  the following
properties.
\begin{enumerate}
   \item[(i)] \textbf{Height.} For all $a>0$,
     $\N^\psi[H_{\text{max}}(\ct)>a]=v(a)$.

\item[(ii)] \textbf{Mass measure.} The mass measure $\bm^\ct$ is supported by
$\Lf (\ct)$, $\N^\psi[d\ct]$-a.e.

   \item[(iii)] \textbf{Local time.}
There exists a $\ct$-measure valued process $(\ell^a, a\geq 0)$
c\`adl\`ag for the weak topology on finite measure on $\ct$ such that
$\N^\psi[d\ct]$-a.e.:
\begin{equation}
\label{eq:int-la}
\mathbf{m}^{\ct}(dx) = \int_0^\infty \ell^a(dx) \, da,
\end{equation}
$\ell^0=0$, $\inf\{a > 0 ; \ell^a = 0\}=\sup\{a \geq 0 ; \ell^a\neq
0\}=H_{\text{max}}(\ct)$ and for every fixed $a\ge 0$,
$\N^\psi[d\ct]$-a.e.:
 \begin{itemize}
 \item The measure $\ell^a$ is supported on
$\ct(a)$.
 \item We have for every bounded
continuous function $\phi$ on $\ct$:
\begin{align*}
\langle\ell^a,\phi \rangle
& = \lim_{\varepsilon \downarrow 0}
\frac{1}{v(\varepsilon)} \int \phi(x) \ind_{\{H_{\text{max}}(\ct')\ge
\varepsilon\}} \cn_a^{\ct}(dx, d\ct') \\
 & = \lim_{\varepsilon \downarrow 0} \frac{1}{v(\varepsilon)} \int \phi(x)
\ind_{\{H_{\text{max}}(\ct')\ge \varepsilon\}}
\cn_{a-\varepsilon}^{\ct}(dx, d\ct'),\ \text{if}\ a>0.
\end{align*}
 \end{itemize}
 Under $\N^\psi$,  the real  valued process $(\langle\ell^a,1  \rangle ,
 a\geq  0)$ is  distributed as  a CSBP  with branching  mechanism $\psi$
 under its canonical measure.
\item[(iv)]  \textbf{Branching property.}
For every $a>0$, the conditional
distribution of the point measure $\cn_a^{\ct}(dx,d\ct')$ under
$\N^\psi[d\ct|H_{\text{max}}(\ct)>a]$, given $\ct^{(a)}$, is that of a Poisson
point measure on $\ct(a)\times \T$ with intensity
$\ell^a(dx)\N^\psi[d\ct']$.
\item[(v)] \textbf{Branching points.}
\begin{itemize}
\item $\N^\psi[d\ct]$-a.e., the branching points of $\ct$ have 2
  children or an
  infinity number of children.
\item The set of binary branching points (i.e. with 2 children) is empty
  $\N^\psi$ a.e if $\beta=0$ and is a countable dense subset of $\ct$
  if $\beta>0$.
\item The set $\Br_\infty(\ct)$  of infinite branching points is
  nonempty with $\N^\psi$-positive measure if and only if $\pi\ne 0$. If
  $\langle  \pi,1\rangle=+\infty$, the set  $\Br_\infty(\ct)$ is
  $\N^\psi$-a.e. a countable dense subset of $\ct$.
\end{itemize}
\item[(vi)] \textbf{Mass of the nodes.}
The set $\{ d(\emptyset,x),\ x\in \Br_\infty(\ct) \}$ coincides
$\N^\psi$-a.e. with the set of discontinuity times of the mapping $a\mapsto
\ell^a$. Moreover, $\N^\psi$-a.e., for every such discontinuity time $b$, there
is a unique $x_b\in \Br_\infty(\ct)\cap\ct(b)$ and
$\Delta_b>0$, such that:
\[
\ell^b = \ell^{b-} + \Delta_b \delta_{x_b},
\]
where $\Delta_b>0$ is called the mass of the node $x_b$. Furthermore
$\Delta_b$ can  be obtained
by the approximation:
\begin{equation}
\Delta_b = \lim_{\varepsilon \rightarrow 0}
\frac{1}{v(\varepsilon)}
n(x_b,\varepsilon),
\label{DefMas}
\end{equation}
where $n(x_b,\varepsilon)=\int \ind_{\{x_b\}}(x)\ind_{\{H_{\text{max}}(\ct') >
\varepsilon\}} \cn_b^\ct(dx,d\ct')$ is the number of sub-trees with  MRCA
$x_b$ and height larger than $\varepsilon$.
\end{enumerate}

In order to stress the dependence in $\ct$, we may write $\ell^{a, \ct}$
for $\ell^a$.
We set $\sigma^\ct$  or simply $\sigma$ when there  is no confusion, for
the
total mass of the mass measure on $\ct$:
\begin{equation}
\label{eq:s=la}
\sigma=\bm^{\ct}(\ct).
\end{equation}
Notice that \reff{eq:int-la} readily implies that $\bm^\ct(\{x\})=0$ for
all $x\in \ct$.

\subsection{Related measure on L\'evy trees}
\label{sec:meas-LT}

We  define a  probability  measure on  $\T$  as follow.   Let $r>0$  and
$\sum_{k\in\ck}\delta_{\ct^k}$ be  a Poisson point measure  on $\T$ with
intensity  $r\N^\psi$.   Consider $\emptyset$  as  the trivial  measured
rooted  real tree reduced  to the  root with  null mass  measure. Define
$\ct=\emptyset   \circledast_{k\in  \ck}(  \ct^k,   \emptyset)$.   Using
Property (i) as well as \reff{eq:N1-es} below, one easily get that $\ct$
is  a measured locally  compact rooted  real tree,  and thus  belongs to
$\T$. We denote by $\P^\psi_r$ its distribution. Its corresponding local
time  and mass  measure are  respectively defined  by $\ell^a=\sum_{k\in
  \ck}  \ell^{a, \ct^k}$  for  $a\geq 0$,  and $\bm^\ct=\sum_{k\in  \ck}
\bm^{\ct^k}$.    Furthermore,   its   total    mass   is    defined   by
$\sigma=\sum_{k\in  \ck}  \sigma^{\ct^k}$.   By  construction,  we  have
$\P^\psi_r(d\ct)$-a.s.          $\emptyset\in          \Br_\infty(\ct)$,
$\Delta_\emptyset=r$  (see  definition  \reff{DefMas}  with  $b=0$)  and
$\ell^0=r\delta_\emptyset$.   Under $\P^\psi_r$  or under  $\N^\psi$, we
define the process $\cz=(\cz_a,a\geq 0)$ by:
\[
\cz_a=\langle\ell^a,1 \rangle .
\]
According to Property (iii), under $\P^\psi_r$ (resp. $\N^\psi$), the real
valued process $\cz$ is distributed as a
CSBP with branching mechanism $\psi$ with initial value $r$ (resp. under
its canonical measure). Notice
that (under $\N$ or $\P_r^\psi$):
\begin{equation}
   \label{eq:sigma=intZ}
\sigma=\int_0^{+\infty } \cz_a \; da =\bm^\ct(\ct).
\end{equation}
In particular, as $\sigma$ is distributed as the total mass of a CSBP
under its canonical measure, we have that $\N^\psi$-a.s. $\sigma>0$ and
for $q> 0$ such that $\psi(q)>0$:
\begin{equation}
   \label{eq:N1-es}
\N^\psi\left[1-\expp{-\psi(q) \sigma} \right]=q
\quad \text{and}\quad
 \N^\psi\left[\sigma\expp{-\psi(q)\sigma}\right]=\frac{1}{\psi'(q)} .
\end{equation}
The last equation holds for $q=0$ if $\psi'(0)>0$.

We will consider the following measures on $\T$:
\begin{equation}
   \label{eq:cn}
\cn_\theta^\psi
[d\ct]=2\beta\theta\N^{\psi}[d\ct]
 +\int_{(0,+\infty )}  {\pi}(dr)(1-\expp{-\theta r})\P_r^{\psi}(d\ct)
\end{equation}
and
\begin{equation}
\label{eq:def-bN}
\bN^\psi[d\ct]=\frac{\partial}{\partial\theta} \; \cn_\theta^\psi
[d\ct]_{|_{\theta=0}}=2\beta \N^\psi[d\ct]+\int_0^{+\infty}r\pi(dr)\,\P_r^\psi(d\ct).
\end{equation}

Elementary computations yield for $q>0$ such that $\psi(q)>0$:
\begin{equation}
   \label{eq:bN-moment}
\cn_\theta^\psi\left[1- \expp{-\psi(q)
    \sigma}\right]
=\psi(\theta+q)- \psi(\theta) -\psi(q)
\quad\text{and}\quad
\bN^\psi\left[1-\expp{-\psi(q) \sigma}\right]=\psi'(q)
- \psi'(0).
\end{equation}

\subsection{Girsanov transformation}
\label{sec:Girsanov}
For   $\theta\in   \R$,    we   set   $\pi_\theta(dr)=\expp{-\theta   r}
\pi(dr)$.  Let  $\Theta'$ be  the  set of  $\theta\in  \R$  such that  $
\int_{(1,+\infty)}  \pi_\theta   (dr)  <  +\infty$.   If  $\pi=0$,  then
$\Theta'=\R$. We also set $\theta_\infty  = \inf \Theta'$. It is obvious
that $[0,+\infty  )\subset \Theta'$,  $\theta_\infty \leq 0$  and either
$\Theta'=[\theta_\infty  ,  +\infty   )$  or  $\Theta'=(\theta_\infty  ,
+\infty   )$.    We  introduce   the   following  branching   mechanisms
$\psi_\theta$ for $\theta\in \Theta'$:
\begin{equation}
 \label{eq:def-psi-q}
\psi_\theta(\lambda)=\psi(\lambda+\theta)-\psi(\theta), \quad
\lambda+\theta\in \Theta',
\end{equation}
with characteristic:
\begin{equation}
   \label{eq:characteristic}
\left(\psi'(\theta), \beta, \pi_\theta\right).
\end{equation}

Let $\Theta^\psi$ be the set of $\theta\in \Theta'$ such that $\psi_\theta$ is
conservative. Obviously, we have:
\[
[0,+\infty )\subset \Theta^\psi \subset \Theta' \subset \left(\Theta^\psi
\cup \{\theta_\infty\}\right).
\]
Let  $\theta^*$   be  the  unique   positive  root  of  $\psi'$   if  it
exists.  Notice  that  $\theta^*=0$  if  $\psi$  is  critical  and  that
$\theta^*$  exists  and is  positive  if  $\psi$  is super-critical.  If
$\theta^*$  exists, then  the branching  mechanism  $\psi_{\theta^*}$ is
critical. We set $\Theta_*^\psi$ for $[\theta^*,+\infty )$ if $\theta^*$
exists and $\Theta_*^\psi=\Theta^\psi$ otherwise. The function $\psi$ is
a one-to-one mapping from $\Theta_*^\psi$ onto $\psi(\Theta^\psi_*)$. We
write  $\psi^{-1}$  for  the   inverse  of  the  previous  mapping.
In particular, if $\psi_\theta$ is (sub)critical then  we have
$\psi^{-1}(\psi(\theta))=\theta$;  and if $\psi_\theta$ is super-critical then we have
$\theta<\theta^*< \psi^{-1}(\psi(\theta))$.
We set:
\begin{equation}
   \label{eq:q_0}
q_0=\psi^{-1}(0).
\end{equation}

Note that if $\psi$ is super-critical, then $q_0>0$ and, thanks to
\reff{eq:N1-es},
$\N^{\psi}[\sigma=+\infty]=\psi^{-1}(0)>0$.

We recall  the Girsanov transformation from  \cite{ad:ctvmp}, which sums
up the  situation for any branching  mechanism $\psi$. Let  $\psi$ be a
branching mechanism  satisfying (H1-3). Let  $\theta\in \Theta^\psi$ and
$a>0$. We set:
\[
M_a^{\psi, \theta}=\exp\left\{\theta \cz_0-
  \theta\cz_a-\psi(\theta)\int_0^a\cz_sds\right\}.
\]
Recall that $\cz_0=0$ under $\N^\psi$.
For any non-negative measurable functional $F$ defined on $\T$, we have
for $\theta\in \Theta^\psi$ and $a\geq 0$:
 \begin{equation}
 \label{eq:GPx}
 \E_r^{\psi_\theta}[F(\ct^{(a)})]
= \E_r^{\psi} \left[F(\ct^{(a)})
M_a^{\psi,\theta} \right]
\quad\text{and}\quad
 \N^{\psi_\theta}[F(\ct^{(a)})]
= \N^{\psi} \left[F(\ct^{(a)})
M_a^{\psi,\theta}  \right].
 \end{equation}
Furthermore, if $\theta\geq \theta^*$, then we have:
 \begin{align}
 \label{eq:GPr}
 \E_r^{\psi_\theta}[F(\ct)]
&= \E_r^{\psi} \left[F(\ct)\expp{\theta r - \psi(\theta) \sigma}
\ind_{\{\sigma<+\infty \}}  \right],\\
\label{eq:GN}
 \N^{\psi_\theta}[F(\ct)]
&= \N^{\psi} \left[F(\ct)\expp{- \psi(\theta) \sigma}
\ind_{\{\sigma<+\infty \}}\right],\\
\label{eq:Girsanov-bN}
\bN^{\psi_\theta}[F(\ct)]&=\bN^\psi
\left[F(\ct)\expp{-\psi(\theta)\sigma}\ind_{\{\sigma<+\infty \}}\right].
 \end{align}

 We   have   that   under   $\P_r^{\psi}(d\ct)$,  the   random   measure
 ${\cn}_0^{\ct}(dx, d\ct')$, defined by \reff{eq:def-cna} with $a=0$, is
 a  Poisson  point  measure  on $\{\emptyset\}\times\T$  with  intensity
 $r\delta_{\emptyset}(dx)\mN^{\psi}[d\ct']$.   Then,   using  the  first
 equality   in    \reff{eq:GPx}   with   $F=1$,   we    get   that   for
 $\theta\geq\theta^*$ and $a>0$,
\begin{equation}
   \label{eq:Nmq=-q}
 \N^{\psi_{\theta}}\left[1-\exp\left\{\theta\Z_{a}+\psi(\theta)
     \int_0^a\Z_sds\right\}\right]=-\theta.
\end{equation}

\subsection{Pruning L\'evy trees and CRT-valued Processes}
\label{sec:pruning}

A   general   pruning  of   a   L\'evy   tree   has  been   defined   in
\cite{adv:plcrt}. Under  $\N^\psi[d\ct]$ and conditionally  on $\ct$, we
consider  a mark  process $M^\ct(d\theta,dy)$  on  the tree  which is  a
Poisson point measure on $\R_+\times \ct$ with intensity:
\[
\ind_{[0,+\infty)}(\theta)d\theta\left(2\beta \ell^\ct(dy)+\sum_{x\in
   \Br_\infty(\ct)}\Delta_x\delta_x(dy)\right).
\]
The atoms $(\theta_i,y_i)_{i\in I}$ of this measure can be seen as
marks that arrive on the tree, $y_i$ being the location of the mark
and $\theta_i$ the ``time'' at which it appears. There are two kinds
of marks: some are ``uniformly'' distributed on the skeleton of the
tree (they correspond to the term $2\beta \ell^\ct$ in the intensity)
whereas the others lay on the infinite branching points of the tree: an infinite
branching point $y$ being first marked after an exponential time with
parameter $\Delta_y$.

For every $x\in\ct$, we set:
\[
\theta(x)=\inf\{\theta>0,\ M^\ct([0,\theta]\times \lb\emptyset,x\rb)>0\},
\]
which  is  called  the  record   process  on  the  tree  as  defined  in
\cite{ad:rpcrt}.  This corresponds  to the  first  time at  which a  mark
arrives on  $\lb\emptyset,x\rb$.  Using  this record process,  we define
the pruned tree at time $q$ as:
\[
\ct_q=\{x\in\ct,\ \theta(x)\ge q\}
\]
with  the  induced  metric,   root  $\emptyset$  and  mass  measure  the
restriction of the mass measure $\bm^\ct$.  If one cuts the tree $\ct$ at
time $\theta_i$  at point  $y_i$, then $\ct_q$  corresponds to the
resulting sub-tree  of $\ct$
containing the root at time $q$. According to \cite{adv:plcrt}, Theorem
1.1, for fixed $q>0$, the distribution of $\ct_q$ under $\N^\psi$
is $\N^{\psi_q}$. We set:
\[
\sigma_{q}=\bm^{\ct_q}(\ct_q).
\]

Because of  the pruning procedure,  we have $\ct_{\theta}\subset\ct_{q}$
for $0\leq q\leq\theta$.  The  tree-valued process $(\ct_q,q\ge 0)$ is a
Markov  process  under  $\N^\psi$,  see  \cite{ad:ctvmp}.   The  process
$(\ct_q,q\ge  0)$ is  a  non-increasing process  (for  the inclusion  of
trees), and is c\`adl\`ag.  We recall the transition probabilities for
the time reversed process which
are given by the so-called special Markov property (see \cite{adv:plcrt}
Theorem 4.2  or \cite{ad:ctvmp}  Theorem 5.6).
\begin{theo}
   \label{theo:markov-special}
Let $\psi$ be a branching mechanism satisfying (H1-3). Let  $0\leq q\leq\theta$
and   $\ct_\theta$    distributed   according   to   $\N^{\psi_\theta}$.
Conditionally on  $\ct_\theta$, let $\sum_{i\in I^{\theta,q}}\delta_{(x_i,
  \ct^{i}_q)}$ be  a Poisson point  measure on $\ct_{\theta}\times\mbb
T$ with intensity:
\[
\bm^{\ct_\theta}(dx) \cn^{\psi_q}_{\theta-q}[d\ct].
\]
Then, under $\N^\psi$, $(\ct_\theta, \ct_q)$ is distributed as:
\[
\left(\ct_\theta,\, \ct_{\theta}\circledast_{i\in I^{\theta,q}}(
  \ct^{i}_q, x_i)
\right).
\]
\end{theo}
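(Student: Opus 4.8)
The statement is the backward transition probability of the pruning process, i.e.\ the special Markov property of \cite{adv:plcrt} (Theorem 4.2) in the reparametrisation of \cite{ad:ctvmp}; the plan is to recover it from the Poissonian structure of the marks together with the branching property and the Girsanov formulas \reff{eq:GPx}--\reff{eq:GN}. First I would reduce to the case $q=0$. Since the marks on $\ct$ restrict to the marks on $\ct_q$ and $(\psi_q)_{\theta-q}=\psi_\theta$, the consistency of the pruning procedure (see \cite{ad:ctvmp}) shows that pruning $\ct_q\sim\N^{\psi_q}$ by the additional parameter $\theta-q$ yields $\ct_\theta\sim\N^{\psi_\theta}$. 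Replacing $\psi$ by $\psi_q$ and $\theta$ by $\theta-q$, it therefore suffices to reconstruct the full tree $\ct=\ct_0$ from its pruned tree $\ct_\theta$ and to show that, conditionally on $\ct_\theta$, the detached subtrees form a Poisson point measure on $\ct_\theta\times\T$ with intensity $\bm^{\ct_\theta}(dx)\,\cn_\theta^\psi[d\ct']$.

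Since a Poisson point measure is determined by its Laplace functional, I would fix a nonnegative test function and compare the two sides of this conditional law through the corresponding exponential functional. The form of the excursion measure $\cn_\theta^\psi$ in \reff{eq:cn} reflects exactly the two types of marks. A subtree detached by a skeleton mark is, by the branching property (iv) applied at a regular point, an $\N^\psi$-excursion; such marks occur with intensity $2\beta\,\ell^\ct(dy)\,d\theta'$, so over the window $\theta'\in[0,\theta]$ they contribute the term $2\beta\theta\,\N^\psi[d\ct']$. A subtree detached by a node mark at an infinite branching point $x$ of mass $\Delta_x=r$ is, by the branching property together with the approximation \reff{DefMas} and the construction of $\P_r^\psi$, distributed as $\P_r^\psi$; this node fires before time $\theta$ with probability $1-\expp{-\theta r}$, and integrating over the masses of the nodes against $\pi(dr)$ yields the term $\int_{(0,+\infty)}\pi(dr)(1-\expp{-\theta r})\P_r^\psi[d\ct']$. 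Adding the two contributions reproduces $\cn_\theta^\psi$.

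The delicate point --- and the step I expect to be the main obstacle --- is that the attachment points must be shown to be governed by the mass measure $\bm^{\ct_\theta}$ of the \emph{pruned} tree, whereas the marks themselves live against the length measure $\ell^\ct$ (skeleton marks) and at the nodes (node marks) of the \emph{full} tree $\ct$. I would establish this level by level: conditionally on $\ct_\theta^{(a)}$, the subtrees of $\ct_\theta$ above level $a$ form, by the branching property (iv), a Poisson point measure directed by the local time $\ell^{a,\ct_\theta}$, and the Girsanov identity \reff{eq:GPx} controls the change of branching mechanism from $\psi$ to $\psi_\theta$ below level $a$. Integrating in $a$ and using $\bm^{\ct_\theta}=\int_0^\infty\ell^{a,\ct_\theta}\,da$ from \reff{eq:int-la} turns the local time into the mass measure, and the multiplicative constants are pinned down by the moment formula \reff{eq:bN-moment}. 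Consistency of the resulting joint law with the marginal Girsanov formula \reff{eq:GN} for $\sigma$ provides an independent check.

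Finally I would handle the super-critical case, where the total mass $\sigma$ may be infinite. Here one works with the truncated trees $\ct^{(a)}$, for which \reff{eq:GPx} is valid for every $\theta\in\Theta^\psi$, proves the reconstruction identity at each level $a$, and lets $a\to\infty$; the indicators $\ind_{\{\sigma<+\infty\}}$ in \reff{eq:GPr}--\reff{eq:GN} absorb the infinite-mass contributions. An application of the Poisson characterisation (or a monotone class argument) on the set of test functionals then upgrades the Laplace-functional identity to the claimed equality in distribution of $(\ct_\theta,\ct_0)$, which after undoing the reduction gives the stated law of $(\ct_\theta,\ct_q)$.
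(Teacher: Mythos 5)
This theorem is not proved in the paper at all: it is recalled verbatim from the literature, with explicit pointers to Theorem~4.2 of \cite{adv:plcrt} and Theorem~5.6 of \cite{ad:ctvmp}, so there is no internal proof to compare your attempt against. Judged on its own terms, your sketch correctly identifies the combinatorial origin of the intensity $\cn_\theta^\psi$ in \reff{eq:cn} (skeleton marks contributing $2\beta\theta\,\N^\psi$, node marks of mass $r$ firing before time $\theta$ with probability $1-\expp{-\theta r}$ and detaching a $\P_r^\psi$-tree), and the reduction to $q=0$ via the consistency of the pruning and $(\psi_q)_{\theta-q}=\psi_\theta$ is the standard and legitimate first step.

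However, the step you yourself flag as the main obstacle is left genuinely unresolved, and it is the whole content of the theorem. Your level-by-level argument invokes the branching property (iv) for the subtrees of $\ct_\theta$ above level $a$, which are directed by $\ell^{a,\ct_\theta}$; but the objects you need to control are the connected components of $\ct\setminus\ct_\theta$ and their attachment points on $\ct_\theta$, which are not the objects appearing in (iv). Saying that integrating $\ell^{a,\ct_\theta}$ in $a$ via \reff{eq:int-la} ``turns the local time into the mass measure'' does not explain why the roots of the detached components are governed by $\bm^{\ct_\theta}$ rather than by the length measure or the node masses of $\ct$ along which the marks actually live; note also that $\bm^{\ct_\theta}$ is carried by the leaves of $\ct_\theta$, so the claim is far from a formal bookkeeping of intensities. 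Likewise, the assertion that a subtree detached at a skeleton mark is ``an $\N^\psi$-excursion by the branching property applied at a regular point'' requires a Palm/spinal decomposition, not property (iv), which only applies at a fixed height. The actual proofs in \cite{adv:plcrt} go through the exploration-process (Lévy snake) coding, an excursion-theoretic decomposition of the coding process above the pruned part, and a Laplace-functional computation identifying the local time of the pruned exploration process with $\bm^{\ct_\theta}$; none of that machinery is replaced by anything in your outline. As written, your text is a correct heuristic for why the formula should hold, not a proof.
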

According to  \reff{eq:characteristic}, the intensity $\cn^{\psi_q}_{\theta-q}$
is  given  by  \reff{eq:cn}  with  $\psi$  replaced  by  $\psi_q$  and
$\pi(dr)$ replaced by $\expp{-qr}\pi(dr)$, that is:
\begin{equation}
   \label{eq:cn-q}
\cn_{\theta-q}^{\psi_q}
[d\ct]=2\beta(\theta-q)\N^{\psi_q}[d\ct]
 +\int_{(0,+\infty )}  \expp{-qr} {\pi}(dr)(1-\expp{-(\theta-q)
   r})\P_r^{\psi_r}(d\ct).
\end{equation}
The time-reversed process is a Markov process and its
infinitesimal transitions are described in \cite{adh:etiltvp}.

\section{Sub-tree Processes}
\subsection{Sub-tree of the L\'{e}vy tree}
\label{sec:subtree}

Following \cite{dw:glt}, we define a sub-tree process  obtained from pruned CRTs
and Poissonian  selection of leaves.  Let $\psi$ be a branching
mechanism satisfying (H1-3). We set:
\begin{equation}
   \label{eq:def-eta}
\eta=\psi^{-1}(\lambda)\quad\text{for}\quad \lambda\geq 0.
\end{equation}
Notice that $\psi(\lambda)=\eta$ and, with $q_0$ defined by
\reff{eq:q_0},  $\eta>q_0$  if $\lambda>0$.

Conditionally on the tree-valued process $(\ct_\theta,
\theta\in  \Theta^\psi)$,  we consider the point measure of marked
leaves of $\ct_0$:
\begin{equation}
   \label{eq:def-P0}
\cp^0  (dt,dx)=\sum_{i\in  I_0}\dz_{(t_i,
  x_i)}
\end{equation}
defined as a Poisson point measure on $\R_+\times \ct_0 $ with intensity
measure $ dt\, {\bf m}^{\ct_0  }(dx)$. We set:
\[
M_\lambda=\cp^0([0,\lambda]\times\ct_0)
\]
the number of marked leaves in $\ct_0$. We shall be working on
$\{M_\lambda\geq 1\}$ and consider the probability measure:
\begin{equation}
   \label{eq:def-P-psi-l}
\P^{\psi,\lambda}(d\ct)=\N^\psi[d\ct\,|\, M_\lambda\geq 1].
\end{equation}
Notice that $\eta=\N^\psi[M_\lambda\geq 1]$. 
We might  write $\cp^\theta
(dt,dx)=\sum_{i\in  I_\theta}  \delta_{(t_i,  x_i)}(dt,  dx) $  for  the
restriction  of $\cp^0$  to $\R_+\times  \ct_\theta$ for  $\theta\geq 0$.
On $\{M_\lambda\geq 1\}$, for  $\theta\geq0$ ,   we  define  the  pruned  sub-tree
$\tau_\theta(\lambda)$  containing the  root  and all  the ancestors  in
$\ct_\theta$ of the marked leaves of $\ct_0$:
\begin{equation}
   \label{subtree}
\tau_{0}(\lambda)=\bigcup_{i\in I_0, t_i\leq  \lambda} \llb\emptyset,  x_i\rrb
\quad\text{and}\quad 
\tau_{\theta}(\lambda)=\tau_{0}(\lambda)\bigcap \ct_\theta
\end{equation}
if $\lambda>0$, and if $\lambda=0$, we set:
\[
\tau_{\theta}(0)=\bigcap_{\lambda>0} \tau_{\theta}(\lambda).
\]
Notice  that  $\tau_\theta(0)=0$ if  $\ct_0$  has  finite mass measure,  whereas
$\tau_\theta(0)  \neq \emptyset$  (and $\tau_\theta(0)$  has  no  leaf)
if $\ct_0$  has infinite  mass.  By construction, we have a.s. that
$\tau_\theta(\lambda)$ is compact if and only if $\ct_\theta$ is compact
(that is $\ct_\theta$ has finite mass measure). The sub-tree  $\tau_{\theta}(\lz)$  of
$\ct_{\theta}$ and thus  of $\ct_0$ is endowed with  the obvious metric.
We shall consider the following mass measure on $\tau_\theta(\lambda)$:
\begin{equation}
   \label{eq:mass-tql}
\bm^{\tau_\theta(\lambda)}=\inv{\psi_\theta(\eta)} \sum_{x\in
  \Lf (\tau_\theta(\lambda))} \delta_x.
\end{equation}
As  $\theta$ varies, we  obtain a
sub-tree         process         with        parameter         $\lambda$:
$\tau(\lambda)=(\tau_{\theta}(\lz),\theta\geq   0    )$   which   is   a
non-decreasing tree-valued  stochastic process, that  is for $q<\theta$,
$\tau_{\theta}(\lz)\subset\tau_{q}(\lz)$.

\subsection{Reconstruction of the L\'{e}vy tree}
\label{sec:GW}


Let $g$ be  a generating function of a  distribution $p=(p(n), n\in \N)$
such that $g'(0)=0$ (\text{i.e.} $p(1)=0$) and let $c>0$. We shall define by
recursion a  Galton-Watson real tree with  reproduction distribution $p$
and branch length distributed according to an exponential random variable
with mean $1/c$.

Recall Notation \reff{def:gref-m} for the grafting procedure of trees
without mass measure.
We say that a discrete rooted real tree $\cg$ is
a $(g,c)$-Galton-Watson real tree if $\cg$ is distributed as:
\[
\lb
\emptyset,x\rb
\tilde \circledast_{1\leq k\leq K}(\cg_k,x) ,
\]
with:
\begin{itemize}
   \item $\lb
\emptyset,x\rb$  a real tree rooted at $\emptyset$ with no branching
point and such that $E_\emptyset=d(\emptyset,x)$ is
a random exponential variable with parameter $c$,
   \item  $K$ has generating function $g$ and is independent of
     $E_\emptyset$,
\item $(\cg_k, k\in \N^*)$ is a sequence of independent rooted real trees
which have the same distribution as $\cg$ and are independent of
$E_\emptyset$ and $K$.
 \end{itemize}

 Let   $\lambda\geq   0$   and   $\eta=\psi^{-1}(\lambda)$   such   that
 $\eta>0$. We consider the following generating function:
\begin{equation}
   \label{eq:def-g}
g_{(\psi,\lambda)}(r)=r+\frac{\psi((1-r)\psi^{-1}(\lambda))}
{\psi^{-1}(\lambda)\psi'(\psi^{-1}(\lambda))}= r
+\frac{\psi((1-r)\eta)}{\eta \psi'(\eta)}\cdot
\end{equation}
Notice that:
\begin{equation}
   \label{eq:g-01}
g'_{(\psi,\lambda)}(0)=0
\quad\text{and}\quad
g'_{(\psi,\lambda)}(1)=1 -\frac{\psi'(0)}{\psi'(\eta)}\cdot
 \end{equation} 

We  write $\cg(\psi,\lambda)$ for the $(g_{\psi,\lambda},
\psi'(\eta))$-Galton-Watson real tree. 
According to  Theorem 3.2.1
 in \cite{dlg:rtlpsbp}, if $\psi$ is (sub)critical, then the
discrete tree $\tau_0(\lambda)$  under $\P^{\psi,\lambda} $ is
distributed as a Galton-Watson tree $\cg(\psi,\lambda)$  with mass
measure given by \reff{eq:mass-tql}.
Furthermore,   we   can   reconstruct   the   L\'{e}vy   tree   $\ct$   from
$\tau_0(\lambda)$, thanks  to \cite{dw:glt}. For  this, recall Definition
\reff{eq:def-bN}  of $\bN$ and  define the  following probability measure on
$\R_+$:
\begin{equation}
   \label{eq:def-Gamma}
\Gamma_{d,\lambda}^{\psi}(dr)=\ind_{\{d=2\}}\frac{2\beta
}{\psi''(\eta )}\delta_0(dr)
+\frac{r^{d}\expp{-r\eta}}
{|\psi^{(d)}(\eta)|}\pi(dr)   .
\end{equation}

\begin{theo}[Theorem 5.6 of \cite{dw:glt}]
\label{theo:decomp-tree}
Assume that $\psi$ is (sub)critical and (H1-3) hold.  Let $\lambda>
0$ and $\eta=\psi^{-1}(\lambda)$. Under $\P^{\psi, \lambda}$ and conditionally on
$\tau_0(\lambda)$,
 $\ct_0$ is distributed as:
\[
\tilde \tau_0(\lambda)
\circledast_{i\in I}(\ct_i, x_i)
\circledast_{x\in \Br(\tau_0(\lambda))} (\ct'_x, x),
\]
with:
\begin{itemize}
   \item $\tilde \tau_0(\lambda)$ as $\tau_0(\lambda)$ but with 0 as mass
measure,
\item 
$\sum_{i\in I} \delta_{(x_i, \ct_i)}$ is a Poisson point measure
on $\ct_0(\lambda)\times \T$ with intensity
$\ell^{\tau_0(\lambda)}(dx)\, \bN^{\psi_\eta}[d\ct]$,
\item conditionally on 
$\sum_{i\in I} \delta_{(x_i, \ct_i)}$, the trees $\left(\ct'_x, x\in
  \Br(\tau_0(\lambda) )\right)$ are independent
  with $\ct'_x$ is distributed as 
\[
\int \Gamma^\psi_{\kappa(x),\lambda} (dr) \;
  \P_r^{\psi_\eta}[d\ct].
\]
 \end{itemize}
\end{theo}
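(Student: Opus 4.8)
The plan is to decompose the Lévy tree $\ct_0$ according to the structure of its sub-tree $\tau_0(\lambda)$, using the reconstruction principle: $\tau_0(\lambda)$ is the minimal sub-tree spanning the root and the marked leaves, and everything in $\ct_0$ that is \emph{not} on this skeleton hangs off it as a family of bushes. The key observation is that there are two distinct types of such bushes: those grafted at ordinary (non-branching) skeleton points, and those grafted at the branching points $\Br(\tau_0(\lambda))$. The former should assemble into a Poisson point measure along the length measure $\ell^{\tau_0(\lambda)}$, while the latter are special because a branching point of $\tau_0(\lambda)$ of degree $d=\kappa(x)+1$ corresponds either to a binary branching point of $\ct_0$ (the $\beta$-term, forcing $d=2$) or to an infinite branching point of mass $r$ (the $\pi$-term), conditioned on the number of marked leaves it sends into distinct sub-trees.

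\textbf{Step 1: Palm/spinal decomposition of the Poisson selection.} First I would condition on $\tau_0(\lambda)$ and analyze the law of $\ct_0$ by disintegrating the Poisson point measure $\cp^0$ of marked leaves. Since the marked leaves are chosen according to $dt\,\bm^{\ct_0}(dx)$ and $\bm^{\ct_0}$ is carried by $\Lf(\ct_0)$, spanning the root and these leaves produces a skeleton whose branch points encode where the mass measure splits. The essential tool is the branching property (Property (iv) of the excursion measure) together with the local-time/mass-measure relation $\bm^\ct(dx)=\int_0^\infty \ell^a(dx)\,da$ from \reff{eq:int-la}, which lets me translate "Poissonian leaf selection" into a decomposition along the tree.

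\textbf{Step 2: The sub-critical marginal and the shift to $\psi_\eta$.} By the already-cited Theorem 3.2.1 of \cite{dlg:rtlpsbp}, $\tau_0(\lambda)$ under $\P^{\psi,\lambda}$ is the Galton--Watson real tree $\cg(\psi,\lambda)$; its branch points have degrees governed by $g_{(\psi,\lambda)}$. The next step is to identify the law of each detached sub-tree. Along the skeleton the correct intensity is $\bN^{\psi_\eta}$ rather than $\bN^\psi$: this is exactly the Girsanov shift \reff{eq:Girsanov-bN} applied with $\theta$ such that $\psi(\theta)=\lambda$, i.e. $\theta=\eta=\psi^{-1}(\lambda)$, reflecting the conditioning that these bushes carry \emph{no} further marked leaf (each such leaf would have been on the skeleton). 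The factor $\expp{-\psi(\eta)\sigma}=\expp{-\lambda\sigma}$ is precisely the probability that a sub-tree of mass $\sigma$ receives no Poisson mark before time $\lambda$.

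\textbf{Step 3: The branch-point contributions and the measure $\Gamma^\psi_{d,\lambda}$.} Here I would compute, for a branch point of prescribed degree $d$ in $\tau_0(\lambda)$, the conditional law of the attached sub-tree $\ct'_x$. A node of $\ct_0$ becomes a degree-$d$ vertex of $\tau_0(\lambda)$ precisely when $d$ of its sub-trees each capture at least one marked leaf. For an infinite branching point of mass $r$, the emanating sub-trees form (under $\P^{\psi_\eta}_r$) the configuration of \reff{eq:cn}; the number landing marked leaves is governed by $\expp{-r\eta}$ factors, and extracting the degree-$d$ event produces the density $r^d\expp{-r\eta}\pi(dr)/|\psi^{(d)}(\eta)|$, matching \reff{eq:def-Gamma}. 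The binary ($\beta$) term appears only for $d=2$, with weight $2\beta/\psi''(\eta)$; the residual (unmarked) sub-tree at such a node is then distributed as $\P^{\psi_\eta}_r$ after integrating against $\Gamma^\psi_{d,\lambda}$.

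\textbf{The main obstacle} will be Step 3: correctly bookkeeping the combinatorics of which sub-trees at an infinite branching point receive marks versus which remain unmarked, and verifying that the normalizing constants $|\psi^{(d)}(\eta)|$ emerge cleanly from differentiating \reff{eq:N1-es}/\reff{eq:bN-moment} in $q$. One must be careful that the single residual bush $\ct'_x$ absorbs \emph{all} the unmarked sub-trees hanging at $x$ (so it is itself a $\P^{\psi_\eta}_r$-tree, not a single $\bN^{\psi_\eta}$-excursion), and that the marked sub-trees continuing the skeleton are accounted for separately in $\tau_0(\lambda)$ rather than double-counted. Once this disintegration at a generic node is established, the global statement follows by applying the special Markov property and the Poissonian structure conditionally on $\tau_0(\lambda)$, since distinct skeleton points and branch points decouple. $\square$
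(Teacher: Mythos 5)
First, a point of comparison: the paper does not prove this statement at all — Theorem \ref{theo:decomp-tree} is imported verbatim as Theorem 5.6 of \cite{dw:glt}, so there is no internal proof to measure yours against. Judged on its own terms, your outline identifies the correct ingredients, and your two key local identifications do check out: a sub-tree of mass $\sigma$ hanging off the skeleton stays unmarked with probability $\expp{-\lambda\sigma}=\expp{-\psi(\eta)\sigma}$, and tilting $\bN^\psi$ by this factor is exactly \reff{eq:Girsanov-bN}, yielding $\bN^{\psi_\eta}$; likewise, at a node of mass $r$ the number of marked emanating sub-trees is Poisson with parameter $r\eta=r\N^\psi[1-\expp{-\lambda\sigma}]$, so requiring exactly $d$ of them size-biases $\pi(dr)$ to $r^d\expp{-r\eta}\pi(dr)/|\psi^{(d)}(\eta)|$ (with the $2\beta/\psi''(\eta)$ atom at $d=2$) and leaves the unmarked residue distributed as $\P_r^{\psi_\eta}$, which is precisely \reff{eq:def-Gamma}.

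The gap is that these are marginal computations at one skeleton point or one node, whereas the substance of the theorem is the global disintegration of $\ct_0$ along the \emph{random} reduced tree $\tau_0(\lambda)$: that, conditionally on $\tau_0(\lambda)$, the bushes at distinct skeleton points form a Poisson point measure with intensity $\ell^{\tau_0(\lambda)}(dx)\,\bN^{\psi_\eta}[d\ct]$ which is independent of the node decorations, themselves independent across $\Br(\tau_0(\lambda))$. Your Step 1 names this (``Palm/spinal decomposition'') but never executes it. One must extend the Duquesne--Le Gall decomposition along a single ancestral line to a decomposition along a finite random discrete tree — working edge by edge, applying the branching property at each branch point, and checking that conditioning on the leaf-marking does not couple the pieces or alter the law of the skeleton lengths. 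That is exactly what \cite{dw:glt} carries out via the exploration/height process, and you flag it as ``the main obstacle'' rather than resolving it. So the proposal is a correct plan with the right local computations, but the central decoupling argument is missing, and as written it is not yet a proof.
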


\begin{remark}
  In fact, in Theorem 5.6 of \cite{dw:glt}, $\psi$ can be super-critical
  and $\lambda\geq  0$ with $\eta=\psi^{-1}(\lambda)>0$.  But  it is not
  obvious that in this  case the super-critical L\'evy tree distribution
  defined   in  \cite{dw:glt}   and  the   super-critical   L\'evy  tree
  distribution defined  in \cite{ad:ctvmp} and recalled  here in Section
  \ref{sec:exc-meas},  are in  fact the  same. However,  we  deduce from
  Remark \ref{rem:decomp-super} that this equality indeed holds.
\end{remark}

\section{Marginal distributions}
\label{SecsubMG}
The  main  goal  of  this   section  is  to  study  the  one-dimensional
distribution of  the sub-tree process $\tau(\lambda)=(\tau_{\theta}(\lz),
\theta\geq 0)$.

We first give an application of the special Markov property.

\begin{proposition}
\label{prop:marginal}
Let $\psi$ be a branching mechanism satisfying (H1-3).
Let $\lambda\geq 0$ and $\eta=\psi^{-1}(\lambda)$.
Under $\N^\psi$, the couple of trees $(\ct_\theta,
\tau_{\theta}(\lambda))$ on $\{M_\lambda\geq 1\}$ is distributed
as  $(\ct_0, \tau_0(\psi_{\theta}(\eta)))$ under $\N^{\psi_{\theta}}$ on $\{M_{\psi_{\theta}(\eta)}\geq 1\}$.
\end{proposition}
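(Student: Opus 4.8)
The plan is to compute the joint law of $(\ct_\theta, \tau_\theta(\lambda))$ under $\N^\psi$ by combining two pruning mechanisms that operate on disjoint ``resources'' of the tree: the marks governing the pruning process $(\ct_\theta)$, and the Poisson selection of leaves governing $\tau_\theta(\lambda)$. The key observation is that conditionally on $\ct_0$, the two Poisson devices---the mark process $M^{\ct_0}(d\theta,dy)$ defining $(\ct_\theta)$ and the leaf-selection measure $\cp^0(dt,dx)$ with intensity $dt\,\bm^{\ct_0}(dx)$---are independent. So I would first condition on $\ct_0$ and describe the event $\{M_\lambda\geq 1\}$ together with $\tau_\theta(\lambda)$ purely in terms of these two Poisson measures and the record process $\theta(x)$.

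First I would fix $\theta\geq 0$ and characterize $\tau_\theta(\lambda)$ as the minimal subtree of $\ct_0$ spanned by the root and those marked leaves $x_i$ (with $t_i\leq\lambda$) whose entire ancestral line $\lb\emptyset,x_i\rb$ survives the pruning up to time $\theta$, i.e.\ $\theta(x_i)\geq\theta$; equivalently, the marked leaves that lie in $\ct_\theta$. Given $\ct_\theta=\N^{\psi_\theta}$ (by Theorem~\ref{theo:markov-special} applied with $q=0$, which says $\ct_\theta$ has law $\N^{\psi_\theta}$ and $\ct_0$ is reconstructed by grafting Poisson subtrees on $\ct_\theta$), the restriction $\cp^\theta$ of $\cp^0$ to $\R_+\times\ct_\theta$ is, conditionally on $\ct_\theta$, a Poisson point measure with intensity $dt\,\bm^{\ct_\theta}(dx)$, by the restriction/thinning property of Poisson measures on the independent resources. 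The essential point to verify is that the leaves of $\ct_\theta$ carry exactly the mass measure $\bm^{\ct_\theta}$, so that the selection of leaves inside $\ct_\theta$ at rate $\lambda$ reproduces, under $\N^{\psi_\theta}$, precisely the definition \reff{eq:def-P0} of the selection process but with the parameter rescaled.

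The rescaling is the heart of the matter. Under $\N^{\psi_\theta}$, the analogue of the leaf-selection parameter is driven by $\eta'=\psi_\theta^{-1}(\cdot)$, and by the definition \reff{eq:def-eta} one has $\psi_\theta(\eta)=\psi(\eta+\theta)-\psi(\theta)$ with $\eta=\psi^{-1}(\lambda)$. So I would show that selecting leaves of $\ct_\theta$ at rate $\lambda$ from the ambient $\ct_0$ corresponds, intrinsically on $\ct_\theta\sim\N^{\psi_\theta}$, to selecting at rate $\psi_\theta(\eta)$: the mass measure $\bm^{\ct_\theta}$ and the parameter $\psi_\theta(\eta)$ are the correct intrinsic data because $\psi_\theta^{-1}(\psi_\theta(\eta))=\eta$ recovers the same height scale. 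Concretely, I would verify that $\N^{\psi_\theta}[M_{\psi_\theta(\eta)}\geq 1]=\psi_\theta^{-1}(\psi_\theta(\eta))=\eta$ matches $\N^\psi[M_\lambda\geq 1]=\eta$ on the event side, using the identity $\eta=\N^\psi[M_\lambda\geq 1]$ recorded just after \reff{eq:def-P-psi-l}. This equality of the two normalizing probabilities is what lets the conditioned laws coincide.

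I expect the main obstacle to be the careful bookkeeping of the mass measures under the grafting/pruning decomposition: one must confirm that the leaves of $\ct_0$ falling inside $\ct_\theta$ are exactly the leaves of $\ct_\theta$ (with matching mass), and that no mass from the pruned-away Poisson subtrees of Theorem~\ref{theo:markov-special} contributes marked leaves to $\tau_\theta(\lambda)$ --- indeed such leaves are precisely those excluded by the intersection with $\ct_\theta$ in \reff{subtree}. Once the two Poisson resources are disentangled and the intensity of $\cp^\theta$ is identified as $dt\,\bm^{\ct_\theta}(dx)$ on $\ct_\theta\sim\N^{\psi_\theta}$, the claimed equality in distribution of $(\ct_\theta,\tau_\theta(\lambda))$ under $\N^\psi$ with $(\ct_0,\tau_0(\psi_\theta(\eta)))$ under $\N^{\psi_\theta}$ follows by reading off the definitions \reff{eq:def-P0}--\reff{subtree} with $\psi$ replaced by $\psi_\theta$ and $\lambda$ replaced by $\psi_\theta(\eta)$. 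The final step is then to restrict both sides to their respective events $\{M_\lambda\geq 1\}$ and $\{M_{\psi_\theta(\eta)}\geq 1\}$, whose probabilities agree by the computation above.
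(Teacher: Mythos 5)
There is a genuine gap, and it sits exactly at the point your proof leans on hardest. You characterize $\tau_\theta(\lambda)$ as the subtree of $\ct_\theta$ spanned by the root and the marked leaves $x_i$ that themselves survive the pruning (i.e.\ lie in $\ct_\theta$), and you assert that the pruned-away subtrees contribute nothing because their leaves are "excluded by the intersection with $\ct_\theta$". That is not what \reff{subtree} says. Since $\tau_\theta(\lambda)=\tau_0(\lambda)\cap\ct_\theta$ and $\tau_0(\lambda)$ is the union of the ancestral lines of \emph{all} marked leaves of $\ct_0$, a marked leaf $x_i$ sitting in a subtree $\ct_0^j$ grafted at $y_j\in\ct_\theta$ (in the decomposition of the special Markov property) still contributes $\llb\emptyset,x_i\rrb\cap\ct_\theta=\llb\emptyset,y_j\rrb$, so the graft point $y_j$ becomes a leaf of $\tau_\theta(\lambda)$. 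The object you describe is the different tree $\hat\tau_\theta(\lambda)$ that the introduction explicitly distinguishes from $\tau_\theta(\lambda)$ (and which has the law of $\tau_0$ at a \emph{different} parameter $\lambda_\theta=\psi(\psi_\theta^{-1}(\lambda))$).

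This mischaracterization is why your argument has to resort to a vague "rescaling" from rate $\lambda$ to rate $\psi_\theta(\eta)$: with only the directly surviving marked leaves, the selection on $\ct_\theta$ has intensity $\lambda\,\bm^{\ct_\theta}$, and no appeal to $\psi_\theta^{-1}(\psi_\theta(\eta))=\eta$ or to the matching of $\N^\psi[M_\lambda\geq 1]=\N^{\psi_\theta}[M_{\psi_\theta(\eta)}\geq 1]=\eta$ (a consistency check, not a proof) turns $\lambda$ into $\psi_\theta(\eta)$. The missing ingredient is the second, independent Poisson point measure formed by the graft points $y_j$ of those removed subtrees that contain at least one marked leaf before time $\lambda$; conditionally on $\ct_\theta$ its intensity is $\cn_\theta^\psi\bigl[1-\expp{-\lambda\sigma}\bigr]\bm^{\ct_\theta}(dx)=\bigl[\psi(\theta+\eta)-\psi(\theta)-\psi(\eta)\bigr]\bm^{\ct_\theta}(dx)$ by \reff{eq:bN-moment}. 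Superposing it with the rate-$\lambda$ selection of surviving marked leaves gives total intensity $\psi_\theta(\eta)\bm^{\ct_\theta}(dx)$, which is precisely where the parameter $\psi_\theta(\eta)$ in the statement comes from. Your overall strategy (condition on $\ct_\theta$, use the special Markov property, identify a Poisson intensity on $\ct_\theta$, then replace $\N^\psi$-law of $\ct_\theta$ by $\N^{\psi_\theta}$) is the right one and is the paper's, but without this second point process the computation yields the wrong tree and the wrong rate.
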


\begin{proof}
  We  first assume  $\lambda>0$.  From the  special  Markov property  of
  Theorem   \ref{theo:markov-special}   for   the  process   $({\mathcal
    T}(\theta), \theta\geq0)$ under $\mN^{\psi}$, we get:
\begin{equation}
   \label{eq:grafting-ADH}
\ct_0=\ct_{\theta}\circledast_{j\in J^{\theta,0}}(\ct^j_0, y_j),
\end{equation}
where    $\sum_{j\in   J^{\theta,0}}{\dz}_{(y_j,    \ct^j_0)}$   is,
conditionally   on   $\ct_\theta$,    a   Poisson   point   measure   on
$\ct_{\theta}\times\T$           with           intensity
$\bm^{\ct_{\theta}}(dy)\cn_\theta^\psi[d\ct].$

Recall  $\cp^\theta$  is  the  restriction to  $\R_+\times
\ct_\theta$  of  $\cp^0$  defined by \reff{eq:def-P0},  and thus  $\cp_1^\theta=\sum_{i\in  I_\theta} \delta_{x_i}
\ind_{\{t_i\leq \lambda\}} $ is a Poisson point measure on $\ct_\theta$ with
intensity $\lambda \bm^{\ct_\theta}(dx)$.

For $j\in J^{\theta,0}$, let  $s_j=\inf \{t_i; x_i\in \ct_0^j \text{ for
  $i\in I_0$}\}$.  Notice that  conditionally on $\ct_0^j$, $s_j$ has an
exponential       distribution       with       parameter       $\lambda
\bm^{\ct_0}(\ct_0^j)$.  We deduce  that, conditionally  on $\ct_\theta$,
$\cp_2^\theta=\sum_{j\in            J^{\theta,0}}           \delta_{y_j}
\ind_{\{s_j\leq \lambda\}}$    is    a     Poisson    point    measure    on
$\ct_{\theta}\times\R_+$ with intensity:
\[
\bm^{\ct_{\theta}}(dx)\cn_\theta^\psi\left[1- \expp{-\lambda
    \sigma}\right]
=\left[\psi(\theta+\eta)-\psi(\theta)-\psi(\eta)\right] \,
\bm^{\ct_\theta}(dx),
\]
where we use \reff{eq:bN-moment}
to get  the equality.
By construction $\cp^\theta_1$
and $\cp^\theta_2$ are independent Poisson point measures. Therefore,
 $\cp^\theta_1+ \cp^\theta_2$ is a Poisson point measure with intensity:
\[
\bm^{\ct_{\theta}}(dx)\left[ \lambda + \psi(\theta+\eta)- \psi(\theta)
  -\psi(\eta) \right]
= \psi_\theta(\eta) \bm^{\ct_{\theta}}(dx).
\]
To conclude, notice that $\tau_\theta(\lambda)$ is the sub-tree generated
by  the marked  leaves before  time $\lambda$  of $\ct_\theta$,  which are
given by the  atoms of $\cp^\theta_1$, and the roots  $x_j$ of the trees
$\ct^j_0$ having marked leaves before  time $\lambda$, that is the atoms
of  $\cp^\theta_2$.  Then  use  that  $\ct_\theta$  under  $\N^\psi$  is
distributed as $\ct_0$ under $\N^{\psi_\theta}$ to conclude.

For $\lambda=0$, we have  $ \cp_1^\theta=0$ and $\ct^j_0$ contributes to
$\tau_\theta(0)$  if and  only  if it  has  infinite mass.   So, in  the
previous  argument, one  has  to replace  $\cp^\theta_2$ by  $\sum_{j\in
  J^{\theta,0}} \delta_{y_j} \ind_{\{\sigma^{\ct^j_0}=+\infty \}}$ which
is a Poisson point measure with intensity:
\[
\bm^{\ct_{\theta}}(dx)\cn_\theta^\psi\left[\sigma=+\infty \right]
=\psi_\theta(\eta)  \,
\bm^{\ct_\theta}(dx).
\]
Hence the conclusion follows.
\end{proof}

\begin{rem}
   \label{rem:k-marks}
   Assume $\lambda>0$.   Using the notation  from the
   previous proof, for $k\in \N^*$, we let:
\[
Y_k=\Card  \{j\in  J^{\theta,0};\;\; \Card
\left(\Lf (\tau_0(\lambda)) \cap \ct^j_0 \right) =k\}
\]
be the number of trees grafted on $\ct_\theta$ having exactly $k$ leaves
marked at  time $\lambda$  and $Y_0=\langle \cp_1^\theta,  \ind \rangle=
\Card  \left(\Lf (\tau_\theta(\lambda))  \cap \Lf  (\ct)\right)$  be the
number of  marked leaves on  $\ct_\theta$. We get that  conditionally on
$\ct_\theta$, the random variables  $(Y_k, k\in \N)$ are independent,
$Y_0$ is  Poisson with parameter $\lambda \sigma_\theta$,  and for $k\in
\N^*$,  $Y_k$ is Poisson  with parameter  $\sigma_\theta \cn_\theta^\psi
\left[(\lambda\sigma)^k \expp{-\lambda \sigma}\right]/k!$.
\end{rem}

Using  the Girsanov transformation  from Section  \ref{sec:Girsanov}, we
will give a Girsanov transformation for $\tau(\lambda)$.

For $\ct\in \T$, let $L(\ct)=\Card \Lf (\ct)$ be the number of leaves of
the   tree   $\ct$   and  \beqlb\label{defL}L(a,\ct)=L(a,\ct^{(a)})=\Card   \{x\in   \ct;
d^\ct(\emptyset,x)=a\}\eeqlb be  the number of elements of  $\ct$ at distance
$a$ from the root.  Note that:
\begin{equation}
   \label{eq:psi-eta-q0}
\eta=\psi^{-1}(\lambda)=q_0+\psi^{-1}_{q_0}(\lambda),\quad\text{and}\quad
\psi'(\eta)=
\psi'(\psi^{-1}(\lambda))=\psi'_{q_0}(\psi_{q_0}^{-1}(\lambda)).
\end{equation}

We first state a preliminary Lemma. Let $\rP^{\psi,\lambda}(d\G)$ denote the
distribution of the Galton-Watson tree $\G(\psi,\lambda)$ defined in
Section \ref{sec:GW}. 

\begin{lemma}
\label{lem:super}
Let $\psi$ be a branching mechanism satisfying (H1-3). Let $\lambda\geq
0$ and $\eta=\psi^{-1}(\lambda)>0$. 
For any non-negative measurable function $F$ on $\T$ and $a\geq0$, we have:
\[
\rE^{\psi,\lambda}[F(\G^{(a)})]=
\rE^{\psi_{q_0},\lambda}\left[F(\G^{(a)})
\left(\frac{\eta}{\eta-q_0}\right)^{L(a,\G)-1}\right].
\]
\end{lemma}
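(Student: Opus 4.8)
The plan is to realize $\G^{(a)}$ as the genealogical tree, truncated at level $a$, of the continuous-time Markov branching process counting the $L(s,\G)$ points of $\G$ at distance $s$ from the root: a \emph{line} runs up the tree until, after an $\mathrm{Exp}(c)$ waiting time, it is replaced by $\kappa$ offspring lines, with $\kappa$ of generating function $g$ and $c$ the branch rate. The crucial point is that $\G(\psi,\lambda)$ and $\G(\psi_{q_0},\lambda)$ share the \emph{same} branch rate: $c=\psi'(\eta)$ for the former and, by \reff{eq:psi-eta-q0}, $c=\psi_{q_0}'(\eta-q_0)=\psi'(\eta)$ for the latter. Consequently, restricted to $\sigma(\G^{(a)})$, the laws $\rP^{\psi,\lambda}$ and $\rP^{\psi_{q_0},\lambda}$ are mutually absolutely continuous and, since all the edge-length (exponential) factors cancel, the Radon--Nikodym derivative reduces to a product of offspring likelihood ratios over the branching and death events strictly below level $a$:
\[
\frac{d\rP^{\psi,\lambda}}{d\rP^{\psi_{q_0},\lambda}}\Big|_{\sigma(\G^{(a)})}
=\prod_{\substack{v\in(\Br(\G)\cup\Lf(\G))\\ d(\emptyset,v)<a}}\frac{p_1(\kappa_v)}{p_2(\kappa_v)},
\]
where $p_1,p_2$ are the offspring laws with generating functions $g_{(\psi,\lambda)}$ and $g_{(\psi_{q_0},\lambda)}$.

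Next I would compute these ratios from \reff{eq:def-g}. By \reff{eq:g-01} the value $\kappa_v=1$ never occurs, so only $\kappa_v=0$ and $\kappa_v\ge 2$ matter. A short differentiation, using $\psi_{q_0}^{(n)}(\eta-q_0)=\psi^{(n)}(\eta)$ for $n\ge1$ and $\psi(\eta)=\psi_{q_0}(\eta-q_0)=\lambda$, gives for $n\ge2$
\[
p_1(n)=\frac{(-1)^n\eta^{\,n-1}\psi^{(n)}(\eta)}{n!\,\psi'(\eta)},\qquad
p_2(n)=\frac{(-1)^n(\eta-q_0)^{\,n-1}\psi^{(n)}(\eta)}{n!\,\psi'(\eta)},
\]
and $p_1(0)=\lambda/(\eta\psi'(\eta))$, $p_2(0)=\lambda/((\eta-q_0)\psi'(\eta))$. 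In every admissible case this collapses to the single formula $p_1(\kappa_v)/p_2(\kappa_v)=\big(\eta/(\eta-q_0)\big)^{\kappa_v-1}$. Since the common factor $(-1)^n\psi^{(n)}(\eta)\ge0$ vanishes for the same $n$, both offspring laws have the same support, which legitimizes the change of measure.

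It then remains to collect the exponents by a telescoping identity. An event at $v$ replaces one line by $\kappa_v$, changing the number of live lines by $\kappa_v-1$; starting from the single line at the root (which has $\kappa_\emptyset=1$ and is excluded) and summing over all events below level $a$ yields
\[
L(a,\G)=1+\sum_{\substack{v\in(\Br(\G)\cup\Lf(\G))\\ d(\emptyset,v)<a}}(\kappa_v-1).
\]
Hence the derivative above equals $\big(\eta/(\eta-q_0)\big)^{L(a,\G)-1}$, and multiplying by $F(\G^{(a)})$ and taking $\rP^{\psi_{q_0},\lambda}$-expectation gives the stated identity.

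The main obstacle is making the first step watertight, i.e. equipping the space of finite marked trees coding $\G^{(a)}$ (combinatorial shape, branch-point heights, offspring marks, and marks recording which lines survive to level $a$) with a common dominating measure and checking that the $c$-dependent factors cancel exactly because the two branch rates coincide. I expect the cleanest way to avoid path-space measure theory is to argue by induction on the (a.s. finite, since the offspring mean $g'_{(\psi,\lambda)}(1)<\infty$) number of events of $\G$ below level $a$, using the recursive definition $\G\overset{d}{=}\lb\emptyset,x\rb\,\tilde\circledast_{1\le k\le K}(\G_k,x)$: conditioning on $(E_\emptyset,K)$, the edge contributes the factor $1$, the branching at $x$ contributes $\big(\eta/(\eta-q_0)\big)^{K-1}$, and the subtrees contribute the corresponding factors by the induction hypothesis, which recombine through the same telescoping to $\big(\eta/(\eta-q_0)\big)^{L(a,\G)-1}$. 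Throughout one has $\eta>q_0$ (i.e. $\lambda>0$); the degenerate case $\lambda=0$ in the super-critical setting, where $\eta-q_0=0$ and $\G(\psi_{q_0},\lambda)$ is undefined, is excluded, while for $q_0=0$ the weight is $1$ and the statement is trivial.
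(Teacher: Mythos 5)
Your proposal is correct and follows essentially the same route as the paper's proof: both exploit that the two Galton--Watson real trees share the same exponential lifetime parameter $\psi'(\eta)=\psi'_{q_0}(\eta-q_0)$, so the Radon--Nikodym derivative on $\sigma(\G^{(a)})$ reduces to the product of offspring likelihood ratios $p_{(\psi,\lambda)}(\kappa_v)/p_{(\psi_{q_0},\lambda)}(\kappa_v)=\bigl(\eta/(\eta-q_0)\bigr)^{\kappa_v-1}$, collapsed by the telescoping identity $L(a,\G)=1+\sum_v(\kappa_v-1)$ (the paper phrases this via $1+\sum_{x\in\Br(T)}(\kappa_x-1)=L(T)$ applied to $\G^{(a)}$, which is the same count). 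Your extra care about the dominating measure and the induction on the number of events below level $a$ is a reasonable way to make the change of measure rigorous, but it does not change the substance of the argument.
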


\begin{proof}
  Let  $(p_{(\psi,\lz)}(n),   n\in  \N)$  be   the  probability  measure
  determined  by   $g_{(\psi,\lz)}$  defined  by   \reff{eq:def-g}.  Then  $
  p_{(\psi,\lz)}(1)=0$ and for $n\neq 1$, we have:
\begin{equation}
   \label{eq:p-psi-lambda}
p_{(\psi,\lambda)}(n)
=\frac{g_{(\psi,\lambda)}^{(n)}(0)}{n!}
=\frac{|\psi^{(n)}(\eta)| \, \eta^{n-1}}
{\psi'(\eta)n!}\cdot
\end{equation}
Thanks   to    \reff{eq:psi-eta-q0}, we have
$\psi^{-1}_{q_0}(\lambda)=\eta-q_0$ and for    $n\geq 0$,
$\psi^{(n)}(\eta)           =\psi_{q_0}^{(n)}(\eta-q_0)$.            Set
$u=(\eta-q_0)/\eta$. Then, we have for $n\in \N$:
\begin{multline}
   \label{eq:p_0=up}
p_{(\psi_{q_0},\lambda)}(n)
=\frac{ |\psi_{q_0}^{(n)} (\psi_{q_0}^{-1}(\lambda))|
\,  (\psi^{-1}_{q_0}(\lambda))^{n-1}}{ \psi'_{q_0}
  (\psi_{q_0}^{-1}(\lambda))n!} \,\ind_{\{n\neq 1\}}
\\=u^{n-1} \, \frac{|\psi^{(n)}(\eta)| \, \eta^{n-1}}
{\psi'(\eta)n!} \,\ind_{\{n\neq 1\}}
=u ^{n-1}p_{(\psi,\lambda)}(n).
\end{multline}
The  number  of  leaves  of  $\G^{(a)}$  which are  leaves  of  $\G$  is
$\rP^{\psi_{q_0},\lambda}(d\G)$-a.s.   given,    for   fixed   $a$,   by
$L(\G^{(a)})  -  L(a,  \G)   $.   Thanks  to  \reff{eq:psi-eta-q0},  the
individual      lifetimes       under      $\rP^{\psi,\lambda}$      and
$\rP^{\psi_{q_0},\lambda}$ have the same distribution. Recall $\kappa_x$
is the number of children of $x$. Therefore, we have:
\begin{align*}
\rE^{\psi,\lambda}[F(\G^{(a)})]
&=\rE^{\psi_{q_0},\lambda}\left[
F(\G^{(a)}) \left(\frac{p_{(\psi,\lz)}(0)}
{p_{(\psi_{q_0},\lz)}(0)}\right)^{L(\G^{(a)})- L(a, \G)}
\prod_{x\in \Br (\G^{(a)})}\frac{p_{(\psi,\lz)}(\kappa_{x})}
{p_{(\psi_{q_0},\lz)}(\kappa_{x})}\right]\\
&=\rE^{\psi_{q_0},\lambda}\left[
F(\G^{(a)})
u^{L(\G^{(a)}) - L(a, \G) - \sum_{x\in
    \Br(\G^{(a)})}(\kappa_{x}-1)}\right]\\
&=\rE^{\psi_{q_0},\lambda}\left[
F(\G^{(a)})
u^{1-L(a,\G) }\right],
\end{align*}
where the last equality is a consequence of the following fact for
finite discrete trees $\G$:
\[
1+\sum_{x\in \Br(\G)}(\kappa_{x}-1)=
L(\G).
\]
\end{proof}
Recall (\ref{defL}).
We shall consider the following whole families of sub-trees and leaves:
\[
\tau^{(a)}_{\theta,\lambda}=\{ \tau^{(a)}_\theta(z), z\geq \lambda\},\quad
L(a, \tau_{\theta,\lambda})=L(a, \tau_{\theta,\lambda}^{(a)})=\{L(a, \tau_{\theta}^{(a)}(z)), z\geq\lambda\}.
\]
We have the following Girsanov theorem.

\begin{theo}
\label{thesuper}
Let $\psi$ be a branching mechanism satisfying (H1-3).  Let $\lambda>
0$ and $\eta=\psi^{-1}(\lambda)$. If  $\psi$ is super-critical, then for
any non-negative measurable functional $H$ on the Skorokhod space
$\DD([\lambda, +\infty ), \T)$, we have:
\begin{equation}
   \label{eq:tau-girsanov}
\N^{\psi}\left[H(\tau_{0,\lambda}^{(a)})\ind_{\{M_\lambda\geq
    1\}}\right]=\N^{\psi_{q_0}}\left[ 
\left(\frac{\eta}{\eta- q_0}\right)^{L({a},\tau_0(\lambda))}
H(\tau_{0,\lambda}^{(a)}) \ind_{\{M_\lambda\geq
    1\}}\right].
\end{equation}
\end{theo}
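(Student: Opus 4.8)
The plan is to condition on the truncated tree $\ct_0^{(a)}$, apply the Girsanov transformation \reff{eq:GPx}, and control the marked leaves through the Poissonian structure of the sub-trees above level $a$. Since $\psi$ is super-critical we have $q_0=\psi^{-1}(0)>0$, $\psi(q_0)=0$, the mechanism $\psi_{q_0}$ is sub-critical, and $\psi_{q_0}^{-1}(\lambda)=\eta-q_0$ by \reff{eq:psi-eta-q0}. Write $\cz_a=\langle\ell^a,1\rangle$ and decompose $\ct_0=\ct_0^{(a)}\circledast_{i\in I}(\ct^i,\emptyset_i)$ with $\cn_a^{\ct_0}=\sum_{i\in I}\delta_{(\emptyset_i,\ct^i)}$, as in \reff{eq:def-cna}. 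The marks of $\cp^0$ then split into internal marks (those lying in $\ct_0^{(a)}$) and, for each $i$, the first-mark-time $s_i=\inf\{t_j;\ x_j\in\ct^i\}$, which conditionally on $\ct^i$ is exponential with parameter $\sigma^{\ct^i}$. Since $\tau_0^{(a)}(z)=\tau_0(z)\cap\ct_0^{(a)}$ and $\emptyset_i\in\tau_0^{(a)}(z)$ if and only if $s_i\le z$, the level-$a$ points of $\tau_0(z)$ are exactly the $\emptyset_i$ with $s_i\le z$; in particular $L(a,\tau_0(\lambda))=\Card\{i;\ s_i\le\lambda\}$, and for $a>0$ one has $\{M_\lambda\ge1\}=\{\tau_0^{(a)}(\lambda)\neq\{\emptyset\}\}$, a functional of $\tau_{0,\lambda}^{(a)}$ (the degenerate case $a=0$ reduces to the normalisations $\N^\psi[M_\lambda\ge1]=\eta$ and $\N^{\psi_{q_0}}[M_\lambda\ge1]=\eta-q_0$).

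The crucial point is that, as the family $\tau_{0,\lambda}^{(a)}=(\tau_0^{(a)}(z),z\ge\lambda)$ is observed only for $z\ge\lambda$, the functional $H(\tau_{0,\lambda}^{(a)})\ind_{\{M_\lambda\ge1\}}$ sees the external first-mark-times only through the point measure $\sum_{s_i\le\lambda}\delta_{\emptyset_i}$ on $\ct_0(a)$ (the initial level-$a$ points, recorded by location but not arrival time) together with the later arrivals $\sum_{s_i>\lambda}\delta_{(\emptyset_i,s_i)}$; the values $s_i\in[0,\lambda]$ are forgotten. By the branching property (iv), conditionally on $\ct_0^{(a)}$ the measure $\cn_a^{\ct_0}$ is Poisson with intensity $\ell^a(dx)\,\N^{\psi'}[d\ct']$ for the ambient mechanism $\psi'\in\{\psi,\psi_{q_0}\}$, so after marginalising the exponential times $\sum_i\delta_{(\emptyset_i,s_i)}$ is Poisson with intensity $\ell^a(dx)\,\mu^{\psi'}(ds)$, where by \reff{eq:N1-es} $\mu^{\psi'}([0,z])=\N^{\psi'}[1-\expp{-z\sigma}]=(\psi')^{-1}(z)$. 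Hence $\mu^\psi$ and $\mu^{\psi_{q_0}}$ agree on $(0,+\infty)$ and differ only by the atom $\mu^\psi(\{0\})=q_0$ (the infinite-mass sub-trees, present from time $0$), while $\mu^{\psi_{q_0}}(\{0\})=0$. Consequently the initial level-$a$ points form, given $\ct_0^{(a)}$, a Poisson measure on $\ct_0(a)$ with intensity $\eta\,\ell^a$ under $\N^\psi$ and $(\eta-q_0)\,\ell^a$ under $\N^{\psi_{q_0}}$, whereas the later arrivals and all internal marks (which depend only on the measured tree $\ct_0^{(a)}$) have the same conditional law under both measures.

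It remains to assemble two densities. Conditionally on $\ct_0^{(a)}$, the Radon--Nikodym formula for Poisson measures with intensities $\eta\ell^a$ and $(\eta-q_0)\ell^a$ gives, with $\Phi=H(\tau_{0,\lambda}^{(a)})\ind_{\{M_\lambda\ge1\}}$ and no density coming from the later arrivals or internal marks,
\[
\N^\psi[\Phi\mid \ct_0^{(a)}]=\N^{\psi_{q_0}}\big[\Phi\,(\eta/(\eta-q_0))^{L(a,\tau_0(\lambda))}\,\expp{-q_0\cz_a}\mid\ct_0^{(a)}\big],
\]
$\cz_a=\langle\ell^a,1\rangle$ being the total intensity mass. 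The right-hand conditional expectation is a functional $G(\ct_0^{(a)})$ of $\ct_0^{(a)}$ alone, so the Girsanov transformation \reff{eq:GPx} applies: taking $\theta=q_0$ and using $\cz_0=0$, $\psi(q_0)=0$, the weight is $M_a^{\psi,q_0}=\expp{-q_0\cz_a}$, whence $\N^\psi[G(\ct_0^{(a)})]=\N^{\psi_{q_0}}[G(\ct_0^{(a)})\expp{q_0\cz_a}]$. Integrating the displayed identity against $\N^\psi$, applying this Girsanov identity, and cancelling the $\ct_0^{(a)}$-measurable factor $\expp{\pm q_0\cz_a}$ yields $\N^\psi[\Phi]=\N^{\psi_{q_0}}[(\eta/(\eta-q_0))^{L(a,\tau_0(\lambda))}\,\Phi]$, which is \reff{eq:tau-girsanov}.

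The main obstacle is precisely this last absolute-continuity step. The intensities $\mu^\psi$ and $\mu^{\psi_{q_0}}$ are mutually singular, the former charging $\{0\}$ and the latter not, so no Radon--Nikodym derivative exists on the full configuration of first-mark-times; what rescues the argument is that $H(\tau_{0,\lambda}^{(a)})\ind_{\{M_\lambda\ge1\}}$ sees only the number and the locations of the initial level-$a$ points, and never their arrival times in $[0,\lambda]$. On this projected information the two laws are genuinely absolutely continuous, and the singular atom at $s=0$ is harmlessly absorbed into the count $L(a,\tau_0(\lambda))$. Making this reduction rigorous — the measurability of the functional with respect to the projected data, and the integrability needed to invoke \reff{eq:GPx}, all computations taking place on $\{M_\lambda\ge1\}$, which carries finite $\N^\psi$-mass — is where the care lies.
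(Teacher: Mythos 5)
Your proof is correct and follows essentially the same route as the paper's: conditioning on $\ct^{(a)}$, identifying the level-$a$ points of $\tau_0(\lambda)$ as a Poisson configuration with intensity $\eta\,\ell^a$ under $\N^{\psi}$ versus $(\eta-q_0)\,\ell^a$ under $\N^{\psi_{q_0}}$, noting that the post-$\lambda$ arrivals and the internal marks have identical conditional laws under both measures, and invoking the Girsanov transformation \reff{eq:GPx} whose weight $\expp{-q_0\cz_a}$ exactly cancels the exponential normalisation in the Poisson density. Your packaging via the marked point process $\sum_i\delta_{(\emptyset_i,s_i)}$ with intensity $\ell^a(dx)\,\mu^{\psi'}(ds)$ and the explicit identification of the singular atom $q_0\delta_0$ (the infinite-mass sub-trees) is a slightly more structural rendering of what the paper does with its auxiliary process $\cp^\psi$ and the explicit Poisson resummation, but the mechanism is the same.
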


\begin{proof}
  Recall the random measure  $\cn_a^\ct$ defined in \reff{eq:def-cna} is
  according to  the branching property (iv), conditionally  on $\ct^{(a)}$, a
  Poisson  point  measure  with intensity  $\ell^a(dx)\N^\psi[dT]$.   We
  deduce      that,       conditionally      on      $\ct^{(a)}$,      $L(a,
  \tau_0(\lambda))=L(a,\tau_0^{(a)}(\lz))$ is a  Poisson random variable with
  parameter:
\[
\N^\psi\left[1-\expp{-\lambda\sigma}\right]\cz_a= \eta \cz_a.
\]
Let $\cp^\psi$ be, conditionally on $\ct^{(a)}$ and $\tau^{(a)}_0(\lambda)$, a
Poisson point measure on $[\lambda, +\infty )$ with intensity $
\cz_a \left(\psi^{-1}\right)'(z) \, dz$.
We consider the family of random variables:
\[
\cp^\psi_\lambda=\{ \cp^\psi([\lambda,z]), z\geq \lambda\}.
\]
Using again  the branching property  (iv), we get that,  under $\N^\psi$
and conditionally on $\ct^{(a)}$, $L\left(a, \tau_{0,\lambda}\right)$ is
distributed as $L\left(a,  \tau_0 (\lambda) \right) + \cp^\psi_\lambda:=\{ L\left(a,  \tau_0 (\lambda) \right) +\cp^\psi([\lambda,z]), z\geq \lambda\}$.
Then notice that the first equality of \reff{eq:psi-eta-q0} implies that
$\cp^\psi_\lambda$  under $\N^\psi\left[\,\cdot\,  |\ct^{(a)}\right]$ is
distributed          as         $\cp^{\psi_{q_0}}_\lambda$         under
$\N^{\psi_{q_0}}\left[\,\cdot\, |\ct^{(a)} \right]$.  We set:
\[
F(\ct^{(a)},
L(a,\tau_{0,\lambda}^{(a)}))=\N^{\psi}\left[H(\tau_{0,\lambda}^{(a)}) \ind_{\{M_\lambda\geq
    1\}} \, | \, 
\; \ct^{(a)}, L(a,\tau_{0,\lambda}^{(a)})\right].
\]
We deduce that:
\begin{align*}
 \N^{\psi}\left[H(\tau_{0,\lambda}^{(a)})\ind_{\{M_\lambda\geq
    1\}}\right] 
&= \N^{\psi}\left[F(\ct^{(a)},
L(a,\tau_{0,\lambda}^{(a)}))\right]  \\
&= \N^{\psi}\left[F(\ct^{(a)},
L(a,\tau_0^{(a)}(\lambda))+ \cp^\psi_\lambda)\right]  \\
&= \N^{\psi}\left[\sum_{k=0}^\infty F(\ct^{(a)}, k+ \cp^\psi_\lambda)
  \frac{(\eta \cz_a)^k}{k!}\expp{- \eta \cz_a}\right]  \\
&= \N^{\psi_{q_0}}\left[\sum_{k=0}^\infty F(\ct^{(a)}, k+
  \cp^{\psi_{q_0}}_\lambda)
  \frac{(\eta \cz_a)^k}{k!}\expp{- (\eta- q_0) \cz_a}\right] ,
\end{align*}
where   we  used   the  conditional   independence  of   $\cp^\psi$  and
$\tau_0^{(a)}(\lambda)$  given $\ct^{(a)}$ for  the third  equality, the
Girsanov transformation  \reff{eq:GPx} for  the last equality  (and that
$\psi(q_0)=0$).   Using  $\psi_{q_0}^{-1}(\lambda)=\eta-q_0$, we  notice
that $L(a, \tau_0(\lambda))$  is under $\N^{\psi_{q_0}}\left[\, \cdot\,|
  \ct^{(a)} \right]$ a Poisson random variable with parameter:
\[
\N^{\psi_{q_0}}\left[1-\expp{-\lambda\sigma}\right]\cz_a= (\eta-q_0)
\cz_a.
\]
Therefore, we obtain:
\begin{align*}
 \N^{\psi}\left[H(\tau_{0,\lambda}^{(a)})\ind_{\{M_\lambda\geq
    1\}}\right]
&= \N^{\psi_{q_0}}\left[\sum_{k=0}^\infty
  \left(\frac{\eta}{\eta-q_0}\right)^k F(\ct^{(a)}, k+
  \cp^{\psi_{q_0}}_\lambda)
  \frac{((\eta-q_0) \cz_a)^k}{k!}\expp{- (\eta- q_0) \cz_a}\right] \\
&= \N^{\psi_{q_0}}\left[
  \left(\frac{\eta}{\eta-q_0}\right)^{L(a, \tau_0(\lambda))}
  F(\ct^{(a)},L(a, \tau_0(\lambda))+ \cp^{\psi_{q_0}}_\lambda)
  \right] \\
&= \N^{\psi_{q_0}}\left[
  \left(\frac{\eta}{\eta-q_0}\right)^{L(a, \tau_0(\lambda))}
  F(\ct^{(a)},L(a, \tau_{0,\lambda}^{(0)}))
  \right] ,
\end{align*}
where we  used   for the
last equality that  under $\N^{\psi_{q_0}}$ and  conditionally  on $\ct^{(a)}$,
$L\left(a, \tau_{0, \lambda}^{(a)}\right)$ is distributed as $L\left(a,
  \tau_0 (\lambda)  \right)  +  \cp^{\psi_{q_0}}_\lambda$.

By construction,  the  distribution  of
$\tau_{0,\lambda}^{(a)}$    conditionally     on     $\ct^{(a)}$    and
$L(a,\tau_{0,\lambda}^{(a)})$  is   the  same   under   $\N^\psi$  and
$\N^{\psi_\theta}$   for   any   $\theta>0$   and  in   particular   for
$\theta=q_0$. We deduce \reff{eq:tau-girsanov}.
 \end{proof}

We  immediately deduce  the following  Corollary.

\begin{cor}
   \label{cor:tau-law}
Let $\psi$ be a branching mechanism satisfying (H1-3). Let $\lambda>0$
and $\eta=\psi^{-1}(\lambda)$. 
Under $\P^{\psi,\lambda}$, for each $\theta\geq0$,
the sub-tree $\tau_{\theta}(\lz)$ is distributed as the  Galton-Watson real
 tree $\cg(\psi_\theta, \psi_\theta(\eta))$ with  mass
 measure 
 given by \reff{eq:mass-tql}.
\end{cor}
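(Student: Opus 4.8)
The plan is to establish the case $\theta=0$ for an arbitrary branching mechanism satisfying (H1-3), and then to deduce the general case $\theta\geq 0$ from Proposition \ref{prop:marginal}. Throughout, note that the mass measure \reff{eq:mass-tql} is a deterministic functional of the underlying tree (it is $\psi_\theta(\eta)^{-1}$ times the counting measure on the leaves), so it transfers automatically once the laws of the tree structures are matched.

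For $\theta=0$ and $\psi$ (sub)critical, the assertion that $\tau_0(\lambda)$ under $\P^{\psi,\lambda}$ is distributed as $\cg(\psi,\lambda)$ is precisely Theorem 3.2.1 of \cite{dlg:rtlpsbp}. For $\theta=0$ and $\psi$ super-critical, I would reduce to this case through $\psi_{q_0}$. Since $\lambda>0$ gives $\eta>q_0>\theta^*$, one has $\psi_{q_0}'(0)=\psi'(q_0)>0$, so $\psi_{q_0}$ is subcritical and, by \reff{eq:psi-eta-q0}, $\psi_{q_0}^{-1}(\lambda)=\eta-q_0>0$; the (sub)critical case therefore gives that $\tau_0(\lambda)$ under $\P^{\psi_{q_0},\lambda}$ is distributed as $\cg(\psi_{q_0},\lambda)$. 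The key observation is that the tree-level Girsanov formula \reff{eq:tau-girsanov} and the discrete formula of Lemma \ref{lem:super} carry the \emph{same} reweighting factor. Indeed, taking $H$ to depend only on the value at $z=\lambda$, dividing \reff{eq:tau-girsanov} by the total masses $\N^\psi[M_\lambda\geq1]=\eta$ and $\N^{\psi_{q_0}}[M_\lambda\geq1]=\eta-q_0$, and writing $\E^{\psi,\lambda}$, $\E^{\psi_{q_0},\lambda}$ for the expectations under the conditioned probabilities, we obtain
\[
\E^{\psi,\lambda}\!\left[H(\tau_0^{(a)}(\lambda))\right]
=\E^{\psi_{q_0},\lambda}\!\left[\left(\frac{\eta}{\eta-q_0}\right)^{L(a,\tau_0(\lambda))-1}H(\tau_0^{(a)}(\lambda))\right],
\]
which coincides term by term with Lemma \ref{lem:super} applied to $\rP^{\psi,\lambda}$ versus $\rP^{\psi_{q_0},\lambda}$. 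Since $\tau_0(\lambda)$ under $\P^{\psi_{q_0},\lambda}$ equals $\cg(\psi_{q_0},\lambda)$ in law and the reweightings are identical on both sides, the laws of $\tau_0^{(a)}(\lambda)$ under $\P^{\psi,\lambda}$ and of $\cg(\psi,\lambda)^{(a)}$ agree for every $a\geq0$. As a tree is recovered from the consistent family of its restrictions $(T^{(a)})_{a\geq 0}$, this identifies the full law of $\tau_0(\lambda)$ with that of $\cg(\psi,\lambda)$.

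For general $\theta\geq0$, I would invoke Proposition \ref{prop:marginal}: under $\N^\psi$ on $\{M_\lambda\geq1\}$ the tree $\tau_\theta(\lambda)$ is distributed as $\tau_0(\psi_\theta(\eta))$ under $\N^{\psi_\theta}$ on $\{M_{\psi_\theta(\eta)}\geq1\}$. The two events have equal total mass, since $\N^\psi[M_\lambda\geq1]=\eta$ and $\N^{\psi_\theta}[M_{\psi_\theta(\eta)}\geq1]=\psi_\theta^{-1}(\psi_\theta(\eta))=\eta$; here $\eta\geq\theta^*$ (with $\theta^*$ the root of $\psi'$ when it exists) together with $\theta\geq0$ ensures $\eta$ lies in the range on which $\psi_\theta^{-1}\circ\psi_\theta$ is the identity. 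Dividing both restricted laws by $\eta$ shows that $\tau_\theta(\lambda)$ under $\P^{\psi,\lambda}$ is distributed as $\tau_0(\psi_\theta(\eta))$ under $\P^{\psi_\theta,\psi_\theta(\eta)}$, which (noting that $\psi_\theta$ again satisfies (H1-3) and has selection parameter $\lambda'=\psi_\theta(\eta)$ with $\psi_\theta^{-1}(\lambda')=\eta$) is $\cg(\psi_\theta,\psi_\theta(\eta))$ by the $\theta=0$ case. I expect the main obstacle to be the super-critical $\theta=0$ step: one must verify that the normalizations $\eta$ and $\eta-q_0$ are exactly the factors turning the tree-level weight $(\eta/(\eta-q_0))^{L(a,\tau_0(\lambda))}$ into the discrete weight $(\eta/(\eta-q_0))^{L(a,\cdot)-1}$ of Lemma \ref{lem:super}, and that the agreement of the reweighted level-$a$ laws for all $a$ genuinely pins down the full tree law rather than only its level-$a$ marginals.
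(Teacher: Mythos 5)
Your proposal is correct and follows essentially the same route as the paper: Theorem 3.2.1 of \cite{dlg:rtlpsbp} for the (sub)critical case, reduction of the super-critical case to $\psi_{q_0}$ by matching the Girsanov weight of Theorem \ref{thesuper} (normalized by $\eta$ and $\eta-q_0$) against the identical discrete weight of Lemma \ref{lem:super}, and Proposition \ref{prop:marginal} to pass from $\theta=0$ to general $\theta\geq 0$. The only organizational difference is that you settle $\theta=0$ for all $\psi$ before invoking Proposition \ref{prop:marginal}, whereas the paper interleaves the two steps; the substance is the same.
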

Recall $\rP^{\psi,\lambda}(d\G)$ denotes the
distribution of the Galton-Watson tree $\G(\psi,\lambda)$ defined in
Section \ref{sec:GW}. 

\begin{proof}
If $\psi$ is (sub)critical, then this is a consequence of  Theorem 3.2.1
in \cite{dlg:rtlpsbp} and      Proposition
\ref{prop:marginal}. Now we assume that $\psi$ is super-critical.
Notice that:
\[
\frac{\eta}{\eta-q_0}=\frac{{\N^{\psi}}[M_{\lambda}\geq1]}
{{\N^{\psi_{q_0}}}[M_{\lambda}\geq1]}.
\]
Using Theorem \ref{thesuper},
this gives that
for  $a>0$ and $G$ a non-negative measurable functional defined on $\T$:
\[
 \E^{\psi,\lambda}\left[G(\tau_{0}^{(a)}(\lambda))\right]
= \E^{\psi_{q_0},\lambda}\left[
  \left(\frac{\eta}{\eta-q_0}\right)^{L(a, \tau_0(\lambda))-1}
 G(\tau_{0}^{(a)}(\lambda))  \right].
\]
Recall that if $(\ct,\emptyset,d, \bm)$  is a measured rooted real tree,
then  we denote by  $\tilde \ct$  the real  tree $(\ct,  \emptyset, d)$.
Since  $\psi_{q_0}$   is  sub-critical,  thanks  to   Theorem  3.2.1  in
\cite{dlg:rtlpsbp}, we get that under $\P^{\psi_{q_0},\lambda}$, $\tilde
\tau_0(\lambda)$  has distribution  $\rP^{\psi_{q_0},\lambda}$.  Then by
Lemma  \ref{lem:super}, we get  that under  $\P^{\psi,\lambda}$, $\tilde
\tau_0(\lambda)$  has  distribution   $\rP^{\psi,  \lambda}$.  Then  use
Proposition  \ref{prop:marginal}  to get  that  for each  $\theta\geq0$,
$\tilde   \tau  _{\theta}   (\lambda)$  under   $\P^{\psi,\lambda}$  has
distribution $\rP^{\psi_{\theta}, \psi_{\theta}(\eta)}$.
\end{proof}

The following Corollary is another direct consequence of Theorem
\ref{thesuper}.
\begin{cor}
\label{cor:mart-leaves}
Let  $\lambda>0$   and  $a>0$  be  fixed.   Under  $\N^{\psi_{q_0}}$  on
$\{M_\lambda\geq 1\}$, the process $(Q_z, z\geq \lambda)$ defined by:
\[
Q_z=\left(\frac{\psi^{-1}(z)}{\psi^{-1}(z) -q_0}\right)^{L(a, \tau_0(z))}
\]
is a backward martingale with respect to the filtration $(\cq_z, z\geq
\lambda)$ with $\cq_z=\sigma(\tau_0({z'}); z'\geq z).$
\end{cor}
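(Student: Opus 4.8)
The plan is to recognise the claim as a statement of consistency of Radon--Nikodym densities along the decreasing filtration $(\cq_z)$ and to prove it by invoking Theorem~\ref{thesuper} at two different base levels. Since $Q_z$ depends on $\tau_0(z)$ only through $L(a,\tau_0(z))$, it is $\cq_z$-measurable, and $\{M_{z'}\ge1\}\in\cq_{z'}$ carries finite $\N^{\psi_{q_0}}$-mass, so the backward martingale property $\E^{\psi_{q_0}}[Q_z\mid\cq_{z'}]=Q_{z'}$ (for $\lambda\le z\le z'$) reduces to the integral identity
\[
\N^{\psi_{q_0}}\!\left[Q_z\,G\,\ind_{\{M_{z'}\ge1\}}\right]=\N^{\psi_{q_0}}\!\left[Q_{z'}\,G\,\ind_{\{M_{z'}\ge1\}}\right]
\]
for every non-negative functional $G$ of the level-$a$ trajectory $\tau_{0,z'}^{(a)}$. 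I would prove it by transporting both sides to $\N^\psi$.

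For the right-hand side I apply Theorem~\ref{thesuper} at base level $z'$ with $H=G$ (the theorem holds with $\lambda$ replaced by any positive level, $\eta$ by $\psi^{-1}(z')$), obtaining $\N^{\psi_{q_0}}[Q_{z'}G\,\ind_{\{M_{z'}\ge1\}}]=\N^{\psi}[G\,\ind_{\{M_{z'}\ge1\}}]$. For the left-hand side I would use the same theorem at base level $z$, whose weight is exactly $Q_z$; the difficulty is that it carries the indicator $\ind_{\{M_z\ge1\}}$ instead of $\ind_{\{M_{z'}\ge1\}}$. Using $z\le z'$ I split $\{M_{z'}\ge1\}=\{M_z\ge1\}\sqcup\{M_z=0,\,M_{z'}\ge1\}$; Theorem~\ref{thesuper} at base $z$ gives $\N^{\psi_{q_0}}[Q_z G\,\ind_{\{M_z\ge1\}}]=\N^{\psi}[G\,\ind_{\{M_z\ge1\}}]$, handling the first piece.

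The main obstacle is the leftover piece $\N^{\psi_{q_0}}[Q_z\,G\,\ind_{\{M_z=0,\,M_{z'}\ge1\}}]$, about which the pruning identity is silent, and here the two facts that make the corollary a genuine \emph{consequence} of Theorem~\ref{thesuper} come in. First, on $\{M_z=0\}$ one has $\tau_0(z)=\emptyset$, hence $L(a,\tau_0(z))=0$ and $Q_z=1$, so the weight disappears from this term. Second, $\{M_z=0\}\subset\{\sigma<+\infty\}$, because $\sigma=+\infty$ forces $M_z=\cp^0([0,z]\times\ct_0)=+\infty$ for every $z>0$; and on $\{\sigma<+\infty\}$ the measures $\N^\psi$ and $\N^{\psi_{q_0}}$ agree on functionals of $(\ct_0,\cp^0)$, by the Girsanov identity \reff{eq:GN} together with $\psi(q_0)=\psi(\psi^{-1}(0))=0$ and the $\psi$-independence of the conditional law of $\cp^0$ given $\ct_0$. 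Hence $\N^{\psi_{q_0}}[G\,\ind_{\{M_z=0,M_{z'}\ge1\}}]=\N^{\psi}[G\,\ind_{\{M_z=0,M_{z'}\ge1\}}]$, and summing the two pieces yields $\N^{\psi_{q_0}}[Q_zG\,\ind_{\{M_{z'}\ge1\}}]=\N^{\psi}[G\,\ind_{\{M_{z'}\ge1\}}]$, which coincides with the right-hand side found above. This proves the test identity, hence the martingale property.

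As a sanity check of the final value (and to cover test functions measurable with respect to the full $\cq_{z'}$, not merely the level-$a$ trajectory), one can argue directly under $\N^{\psi_{q_0}}$: since here $\sigma<+\infty$ a.e. and all subtree masses are finite, conditionally on $L(a,\tau_0(z'))$ the variable $L(a,\tau_0(z))$ is a binomial thinning with success probability $\beta=(\psi^{-1}(z)-q_0)/(\psi^{-1}(z')-q_0)$, and the elementary identity $1-\beta+\beta\,\tfrac{\psi^{-1}(z)}{\psi^{-1}(z)-q_0}=\tfrac{\psi^{-1}(z')}{\psi^{-1}(z')-q_0}$ reproduces $\E^{\psi_{q_0}}[Q_z\mid\cq_{z'}]=Q_{z'}$.
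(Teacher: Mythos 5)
Your main argument is, in substance, the proof the paper intends: the paper offers nothing beyond calling the corollary ``another direct consequence of Theorem \ref{thesuper}'', and what you do --- apply the Girsanov identity of Theorem \ref{thesuper} at the two base levels $z$ and $z'$, then reconcile the mismatched restriction events by noting that $Q_z=1$ on $\{M_z=0\}$ and that $\N^{\psi_{q_0}}$ and $\N^\psi$ agree on $\{\sigma<+\infty\}\supset\{M_z=0\}$ because $\psi(q_0)=0$ in \reff{eq:GN} --- is exactly the consistency-of-densities computation that phrase hides. The treatment of the leftover piece $\{M_z=0,\ M_{z'}\ge 1\}$ is a genuine point the paper glosses over, and you handle it correctly.

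The gap is in your closing paragraph. Your main argument establishes the martingale property only against test functions measurable with respect to the level-$a$ truncated trajectory $\sigma(\tau_0^{(a)}(z''); z''\ge z')$, which is all Theorem \ref{thesuper} can deliver, and your attempted upgrade to the full $\cq_{z'}=\sigma(\tau_0(z''); z''\ge z')$ does not work: the binomial-thinning computation conditions only on $L(a,\tau_0(z'))$, a strictly coarser $\sigma$-field than $\cq_{z'}$. Conditionally on the whole of $\cq_{z'}$ the picture changes: the times of the marks spanning $\tau_0(z')$ are i.i.d.\ uniform on $[0,z']$ given $\cq_{z'}$, so a height-$a$ point $y$ of $\tau_0(z')$ carrying $N_y$ leaves of $\tau_0(z')$ survives into $\tau_0(z)$ with conditional probability $1-(1-z/z')^{N_y}$, whence
\[
\E^{\psi_{q_0}}\left[Q_z\,\Big|\,\cq_{z'}\right]
=\prod_{y}\Bigl(u+(1-u)\,(1-z/z')^{N_y}\Bigr),
\qquad u=\frac{\psi^{-1}(z)}{\psi^{-1}(z)-q_0},
\]
which depends on the $N_y$ and is not a function of $L(a,\tau_0(z'))$ alone unless $q_0=0$. (Averaging over the law of $N_y$ given survival to level $z'$ does return the success probability $(\psi^{-1}(z)-q_0)/(\psi^{-1}(z')-q_0)$, which is exactly why the coarse-filtration statement is true.) So the full-filtration version is in fact false for super-critical $\psi$; the corollary must be read with the filtration generated by the truncated trees $\tau_0^{(a)}(z'')$, $z''\geq z$ (equivalently by the counts $L(a,\tau_0(z''))$), which is what your first three paragraphs prove. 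Drop the last paragraph, or replace it by this remark on the filtration.
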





We present an other  Girsanov transformation for sub-trees.

\begin{remark}
\label{prop:qq-Girsanov}
Let $\psi$ be a branching mechanism satisfying (H1-3).
For any $q\geq\theta \geq0$,  $a>0$ and $F$ a non-negative measurable
functional,  we have:
\begin{equation}
   \label{eq:qq-Girsanov}
\E^{\psi,\lambda} \left[F(\tilde \tau_q^{(a)}(\lambda))\right]
=\E^{\psi,\lambda} \left[F(\tilde \tau_\theta^{(a)}(\lambda))
  N^{\theta,q}_{a,\lambda}(\tau_\theta(\lambda)) \right],
\end{equation}
where $N^{\theta,q}_\lambda$ is defined for  discrete trees  by:
\[
N_{a,\lambda}^{ \theta, q}(T)
=\left(\frac{\psi_q(\eta)}{\psi_{\theta}(\eta)}\right)^{L(T^{(a)})
  -L(a,T)}
\expp{(\psi'(\theta+\eta)-\psi'(q+\eta)) \ell^T(T^{(a)})}
\prod_{x\in \Br(T^{(a)})}
\frac{\psi_q^{(\kappa_x)}(\eta)}{\psi_{\theta}^{(\kappa_x)}(\eta)},
\]
with      the      convention     $\prod_{x\in\emptyset}=1$.       Under
$\P^{\psi,\lambda}$,                     the                     process
$N^{\theta,q}_\lambda=\left(N^{\theta,q}_{a,\lambda}(\tilde
  \tau_\theta(\lambda)) ,  a\geq 0\right)$ is a  martingale with respect
to   the  filtration   $\left(\sigma(\tilde \tau_\theta^{(a)}(\lambda)),  a\geq
  0\right)$.
\end{remark}

\section{Convergence of the sub-tree processes}
\label{SecsubCov}

We provide  an alternative proof of  the convergence of  the sub-trees to
the  L\'{e}vy tree  from \cite{dw:glt}  using  the Gromov-Hausdorff-Prohorov
distance  on $\T$ which  relies on  the Girsanov  transformation. Recall
that for simplicity, we  identify $T$ and $(T,d^T,\emptyset^T, \bm^T)\in
\T$.  And,   under  $\P^\psi_r$  or  $\N^\psi$,  the   mass  measure  on
$\tau_0(\lambda)$ is given by \reff{eq:mass-tql}.

\begin{theo}
   \label{theo:cv-dGHPc}
Let  $\psi$ be  a branching mechanism  satisfying (H1-3).
We have $\N^\psi$-a.e. or $\P^\psi_r$-a.s.:
\begin{equation}
   \label{eq:cv-dGHP}
\lim_{\lambda\rightarrow+\infty } d_{\text{GHP}}(\ct,
\tau_0(\lambda))=0.
\end{equation}
\end{theo}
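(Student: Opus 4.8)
The plan is to exploit the integral form of $d_{\text{GHP}}$ over balls together with the monotonicity $\tau_0(\lambda)\subset\tau_0(\lambda')\subset\ct$ for $\lambda\le\lambda'$. Since $\tau_0(\lambda)$ embeds isometrically in $\ct$ with the same root, I would estimate each restricted distance by embedding both restrictions in $\ct^{(r)}$, so that the root term vanishes:
\[
d^c_{\text{GHP}}\left(\ct^{(r)},\tau_0(\lambda)^{(r)}\right)\le d_\text{H}^{\ct^{(r)}}\left(\ct^{(r)},\tau_0(\lambda)^{(r)}\right)+d_\text{P}^{\ct^{(r)}}\left(\bm^{\ct,(r)},\bm^{\tau_0(\lambda),(r)}\right).
\]
As the integrand defining $d_{\text{GHP}}$ is bounded by $1$, dominated convergence reduces the claim to showing that, for a.e.\ $r>0$, both terms on the right tend to $0$ as $\lambda\to+\infty$, $\N^\psi$-a.e.\ (resp.\ $\P^\psi_r$-a.s.). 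This splits the statement into a Hausdorff convergence of sets and a Prohorov convergence of measures on the compact ball $\ct^{(r)}$.

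For the Hausdorff term I would first establish that $\N^\psi$-a.e.\ every nontrivial subtree of $\ct$ carries positive mass, i.e.\ $\supp\bm^\ct=\ct$. Indeed, by the branching property (iv) the subtrees branching above a level $a$ form a Poisson collection of independent $\N^\psi$-trees, each having $\sigma>0$ (recall the discussion around \reff{eq:N1-es}); letting $a$ range over the rationals, $\N^\psi$-a.e.\ every subtree above a rational level has positive mass, and any nontrivial subtree contains such a subtree. Now $\cp^0$ has intensity $dt\,\bm^\ct(dx)$, so its projection on $\ct$ is $\N^\psi$-a.e.\ dense in $\supp\bm^\ct=\ct$; moreover, if $y\in\Sk(\ct)$ the subtree $S_y$ hanging above $y$ has positive mass, hence eventually contains a marked leaf $x'$, and then $y\in\llb\emptyset,x'\rrb\subset\tau_0(\lambda)$ for $\lambda$ large. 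Since leaves are limits of skeleton points, $\overline{\bigcup_{\lambda}\tau_0(\lambda)}=\ct$. Being an increasing family with dense union inside the compact $\ct^{(r)}$, the $\tau_0(\lambda)^{(r)}$ then satisfy $d_\text{H}^{\ct^{(r)}}(\ct^{(r)},\tau_0(\lambda)^{(r)})\to0$: otherwise some $y_\lambda\in\ct^{(r)}$ stays at distance $\ge\varepsilon$ from $\tau_0(\lambda)^{(r)}$, and extracting a limit $y$ by compactness contradicts $\dist(y,\bigcup_\lambda\tau_0(\lambda))=0$ together with monotonicity.

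For the Prohorov term, note that with $\theta=0$ one has $\psi(\eta)=\lambda$, so by \reff{eq:mass-tql} the leaves of $\tau_0(\lambda)$ lying in $\ct^{(r)}$ are exactly the marked leaves $x_i$ with $d(\emptyset,x_i)\le r$, $t_i\le\lambda$, and $\bm^{\tau_0(\lambda),(r)}=\lambda^{-1}\sum_{t_i\le\lambda,\,x_i\in\ct^{(r)}}\delta_{x_i}$. Conditionally on $\ct$ this is $\lambda^{-1}$ times a Poisson process on $\ct^{(r)}$ with intensity $\lambda\,\bm^{\ct,(r)}$, a finite measure by local compactness; hence for each bounded continuous $f$ the process $\lambda\mapsto\langle\cp^0,f\ind_{[0,\lambda]\times\ct^{(r)}}\rangle$ has stationary independent increments of mean $\lambda\,\bm^{\ct,(r)}(f)$, and the strong law of large numbers gives $\langle\bm^{\tau_0(\lambda),(r)},f\rangle\to\bm^{\ct,(r)}(f)$ a.s. Taking $f$ in a countable convergence-determining class yields weak convergence $\bm^{\tau_0(\lambda),(r)}\to\bm^{\ct,(r)}$ a.s., hence $d_\text{P}^{\ct^{(r)}}\to0$, provided $\bm^\ct(\ct(r))=0$ so that restriction to the ball commutes with the weak limit; since the image of $\bm^\ct$ under $x\mapsto d(\emptyset,x)$ has at most countably many atoms, this holds for a.e.\ $r$, which is all that is needed.

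It remains to address the super-critical case, where $\ct$ may be unbounded and $\sigma=+\infty$ with positive probability. The facts $\N^\psi[\sigma>0]$, the branching property, and local finiteness of $\bm^\ct$ on each $\ct^{(r)}$ still hold, so both arguments above go through verbatim on the compact balls. Alternatively, one may transfer the conclusion from the critical mechanism $\psi_{q_0}$, for which $\ct$ is compact, using Theorem \ref{thesuper}: as the density $(\eta/(\eta-q_0))^{L(a,\tau_0(\lambda))}$ is strictly positive, the laws of the restricted sub-tree family under $\N^\psi$ and $\N^{\psi_{q_0}}$ on $\{M_\lambda\ge1\}$ are mutually absolutely continuous, so $\N^{\psi_{q_0}}$-a.e.\ convergence forces $\N^\psi$-a.e.\ convergence. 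I expect the main obstacle to be the Hausdorff step, namely proving that the marked leaves fill $\ct$ \emph{uniformly} on balls; this is exactly where the full-support property $\supp\bm^\ct=\ct$, extracted from the branching property and $\N^\psi[\sigma>0]$, carries the essential weight, the Prohorov step being a routine law of large numbers for the Poissonian selection.
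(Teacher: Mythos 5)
Your argument is correct, but it follows a genuinely different route from the paper's. The paper proves the (sub)critical case (Lemma \ref{lem:cv-dGHPc}) by coding $\ct$ with the continuous height process $h$ of \cite{dlg:rtlpsbp}, realizing the marked leaves as a Poisson process on $[0,\sigma]\times\R_+$ with Lebesgue intensity, identifying $\tau_0(\lambda)$ with the discrete sub-tree $T^{h,\Delta_\lambda}$ spanned by the induced subdivision $\Delta_\lambda$ of $[0,\sigma]$, and invoking the deterministic bound \reff{eq:distTf-Tfdiscret}: the modulus of continuity of $h$ over the vanishing mesh controls the Hausdorff part, and the Prohorov distance between Lebesgue measure and the normalized empirical measure of $\Delta_\lambda$ controls the mass part; the super-critical case is then obtained only through the Girsanov transfer of Theorem \ref{thesuper} (Lemma \ref{lem:cv-dGHP}), and $\P^\psi_r$ via the Poissonian decomposition at the root. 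You instead work intrinsically on the tree, ball by ball: full support of $\bm^\ct$ (extracted from the branching property and $\N^\psi$-a.e.\ $\sigma>0$) plus monotonicity and compactness of $\ct^{(r)}$ give the Hausdorff convergence, and a strong law of large numbers for the Poissonian selection gives the Prohorov convergence. What the paper's route buys is a one-shot global estimate on the compact tree (the stronger $d^c_{\text{GHP}}$ convergence of the whole tree, first part of \reff{eq:cv-dGHPc}) at the cost of being confined to the (sub)critical regime where the height-process coding is available; what your route buys is that it bypasses the height process entirely and, as you note, applies verbatim on compact balls in the super-critical case, so the Girsanov step becomes optional. Three small points you should tighten, none of which is a genuine gap: (a) compactness of $\ct^{(r)}$ in the non-compact case follows from Hopf--Rinow for complete locally compact geodesic spaces, and is worth a word; (b) density of $\bigcup_\lambda\tau_0(\lambda)$ in $\ct$ does not formally imply density of $\bigcup_\lambda\tau_0(\lambda)\cap\ct^{(r)}$ in $\ct^{(r)}$ for arbitrary sets --- you need the observation that each $\tau_0(\lambda)$ is a root-containing subtree, so an approximating point outside the ball can be replaced by its ancestor at height $r$, which lies in $\tau_0(\lambda)\cap\ct^{(r)}$ and is at most twice as far from the target; (c) the statements ``for every fixed $y\in\Sk(\ct)$, a mark eventually falls above $y$'' must be applied along a countable dense subset of $\Sk(\ct)$ chosen measurably with respect to $\ct$, before passing to closures.
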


\begin{proof}
  Under $\N^\psi$, the convergence \reff{eq:cv-dGHP} is a consequence of
  Lemma    \ref{lem:cv-dGHPc}  below  (see    also   Proposition    2.8    in
  \cite{adh:ghptwms}  to get the  $d_{\text{GHP}}$ convergence  from the
  $d_{\text{GHP}}^c$ convergence)  for the (sub)critical  case and Lemma
  \ref{lem:cv-dGHP} below   for   the    super-critical    case.   Then    the
  $\P_r^\psi$-a.s.  convergence  is a consequence  of the representation
  of $\P^\psi_r$ from Section \ref{sec:meas-LT}.
\end{proof}

\begin{rem}
   \label{rem:decomp-super}
   Notice  in particular that  Theorem \ref{theo:cv-dGHPc}  asserts that
   $(\cf,(\cf(\lambda),  \lambda\geq 0)) $  in \cite{dw:glt}  and $(\ct,
   (\tau_0(\lambda),  \lambda\geq 0))$ have  the same  distribution.  In
   particular,  this implies  that the  distribution  for super-critical
   L\'{e}vy trees  defined in \cite{dw:glt}  based on a  coloring leaves
   process and the one defined in \cite{adh:etiltvp} based on a Girsanov
   transformation     are      the     same.      Therefore,     Theorem
   \ref{theo:decomp-tree} is also valid for $\psi$ super-critical.
\end{rem}

\begin{rem}
  \label{rem:cv-proc}   The  pruning   sub-tree   process
  $(\tau_\theta(\lambda),  \theta\geq 0)$  is  a piece-wise  c\`adl\`ag $\T$-valued
  process. It is easy to check, using the representation of the backward
  process   in  \cite{adh:etiltvp},  that   the  pruning   tree  process
  $(\ct_\theta, \theta\geq  0)$ is  also a c\`adl\`ag $\T$-valued  process. Then,
  one can  also prove  the convergence, with  respect to  the Skorokhod
  topology,    of    the    pruning    sub-tree    process
  $(\tau_\theta(\lambda),  \theta\geq 0)$  towards  the  pruning  tree process
  $(\ct_\theta, \theta\geq 0)$ as $\lambda$ goes to infinity.
\end{rem}

Lemma  \ref{lem:cv-dGHPc}  is stated  in  Section \ref{sec:GHP-sub}  and
Lemma   \ref{lem:cv-dGHP}  in   Section   \ref{sec:GHP-super}.   Section
\ref{sec:d-subtree} presents preliminaries  on approximation of trees by
discrete sub-trees.

\subsection{Distance between trees and discrete sub-trees}
\label{sec:d-subtree}
In this Section, we present an immediate convergence result from
sub-trees to trees for trees coded by a function.

Let $f$ be  a non-negative continuous function
with compact support  s.t. $f(0)=0$. We set $\sigma=\sup\{t;
f(t)>0\}$. We define:
\[
d^f(x,y)=f(x)+f(y) - 2 \inf_{u\in [x\wedge y, x\vee y]} f(u)
\]
and the equivalence relation: $x\sim y$ if $d^f(x,y)=0$. We set
$T^f=[0,\sigma]/\sim$. Let $p^f$ be the projection from
$[0,\sigma]$ to $T^f$, with $p^f(x)$ the equivalent class of $x$ in
$T^f$. Let $\bm^f$ be the image of the Lebesgue measure on $[0,\sigma ]$
by the projection $p^f$. Set $\emptyset^f=p^f(0)$ and we still denote by
$d^f$ the distance on $T$, image of $d^f$ by $p^f$. It is well known
that $(T^f,d^f, \emptyset^f, \bm^f)$ is a measured rooted compact real
tree.

Let  $\Delta=\{y_0, \ldots, y_{N_\Delta}\}$, with $1\leq
N_\Delta<+\infty $  and $0=y_0<\cdots  < y_{N_\Delta}\leq \sigma$,  be a
finite   subdivision   of   $[0,\sigma]$.    Let   $|\Delta|=\sup_{0\leq
  i<N_\Delta} y_{i+1} -y_i$  be the mesh of the  subdivision. For $0\leq
i<N_\Delta$, let  $\bar y_i\in [y_i,  y_{i+1}]$ such that  $f(\bar y_i)=
\inf_{u\in  [y_i, y_{i+1}]}  f(u)$.  We consider  $f_\Delta$ the  linear
interpolation of  the points $\{(y_i, f(y_i)), (\bar  y_i, f(\bar y_i));
0\leq   i<N_\Delta\}  \cup  \{(y_{N_\Delta},   f(y_{N_\Delta}))\}$.   By
construction  $T^{f_\Delta}$   is   the  smallest   sub-tree  of  $T^f$
containing $\{ p^f(y_i), 0\leq i\leq N_\Delta\}$.

Let $a_\Delta\geq 0$  and $ \bm^{f,\Delta}$ be the  image of the measure
$\mu_\Delta=a_\Delta  \sum_{y\in  \Delta, y\neq  0}  \delta_{y}$ by  the
projection   $p^f$.  We   consider   the  measured   rooted  real   tree
$T^{f,\Delta}=(T^{f_\Delta},          d^{f_\Delta},         \emptyset^f,
\bm^{f,\Delta})$. It is elementary to get:
\begin{equation}
   \label{eq:distTf-Tfdiscret}
 d_{\text{GHP}}^c(T^f, T^{f,\Delta}) \leq  \sup_{|x-y|\leq  |\Delta|}
 |f(x) -f(y)| +  d_\text{P}^{[0,\sigma]}(\text{Leb}, \mu_\Delta),
\end{equation}
where Leb is the Lebesgue measure on $[0,\sigma]$, and the space
$[0,\sigma]$ is endowed with the usual distance.

\subsection{The (sub)critical case}
\label{sec:GHP-sub}
The main result of this Section is the following Lemma.

\begin{lem}
   \label{lem:cv-dGHPc}
Let  $\psi$ be  a (sub)critical  branching mechanism  satisfying (H1-3).
We have $\N^\psi$-a.e. for all $a_0\geq 0$:
\begin{equation}
   \label{eq:cv-dGHPc}
\lim_{\lambda\rightarrow+\infty }d_{\text{GHP}}^c(\ct,
\tau_0(\lambda))=0
\quad\text{and}\quad
\lim_{\lambda\rightarrow+\infty } \sup_{a\leq a_0} d_{\text{GHP}}^c(\ct^{(a)},
\tau_0^{(a)}(\lambda)) =0.
\end{equation}
\end{lem}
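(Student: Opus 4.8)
The plan is to realise $\ct$ through its height function and to recognise $\tau_0(\lambda)$ as a discrete sub-tree $T^{f,\Delta}$ of the kind treated in Section \ref{sec:d-subtree}, so that the elementary estimate \reff{eq:distTf-Tfdiscret} applies. Since $\psi$ is (sub)critical, $\N^\psi$-a.e. the tree $\ct$ is compact by the Grey condition (H2) and $\sigma<+\infty$ (here $q_0=\psi^{-1}(0)=0$), and it is coded by a continuous function $f=H$ on $[0,\sigma]$ with $f(0)=0$, so that $\ct=T^f$ and $\bm^\ct=(p^f)_*\mathrm{Leb}$ is the push-forward of Lebesgue measure on $[0,\sigma]$. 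I fix such a tree, work conditionally on $\ct$, prove the stated a.s.\ convergence of the marks, and invoke Fubini to recover the $\N^\psi$-a.e.\ statement. Lifting the atoms of $\cp^0$ through $p^f$, those with $t\le\lambda$ form on $[0,\sigma]$ a Poisson point measure of intensity $\lambda\,ds$; writing $s_1,\dots,s_{M_\lambda}$ for their positions and setting $\Delta_\lambda=\{0,s_1,\dots,s_{M_\lambda}\}$ and $a_{\Delta_\lambda}=1/\lambda$, the fact that leaves of $\ct$ are pairwise non-comparable shows that the minimal sub-tree spanned by the root and the $p^f(s_i)$ is exactly $T^{f_{\Delta_\lambda}}$, while the mass measure \reff{eq:mass-tql} (with $\theta=0$, so $\psi(\eta)=\lambda$) equals $(p^f)_*\mu_{\Delta_\lambda}$. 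Hence $\tau_0(\lambda)=T^{f,\Delta_\lambda}$ as a measured rooted tree, and I work on $\{M_\lambda\ge1\}$, which holds for all large $\lambda$.

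For the first limit I would apply \reff{eq:distTf-Tfdiscret} directly:
\[
d_{\text{GHP}}^c(\ct,\tau_0(\lambda))\le \sup_{|x-y|\le|\Delta_\lambda|}|f(x)-f(y)|+d_\text{P}^{[0,\sigma]}(\mathrm{Leb},\mu_{\Delta_\lambda}).
\]
For any $\varepsilon>0$ the number of $s_i$ in a fixed interval of length $\varepsilon$ is Poisson with mean $\lambda\varepsilon\to+\infty$, so the mesh $|\Delta_\lambda|\to0$ a.s.; by uniform continuity of $f$ the first term vanishes. For the second, $\mu_{\Delta_\lambda}=\lambda^{-1}\sum_i\delta_{s_i}$ is the rescaled empirical measure of a Poisson process of intensity $\lambda\,ds$, and for each fixed bounded measurable $\phi$ the strong law of large numbers gives $\int\phi\,d\mu_{\Delta_\lambda}\to\int_0^\sigma\phi\,ds$ a.s.; applying this along a countable convergence-determining family yields a.s.\ weak convergence, hence $d_\text{P}^{[0,\sigma]}(\mathrm{Leb},\mu_{\Delta_\lambda})\to0$.

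For the uniform-in-$a$ statement I would first record the coding identities $\ct^{(a)}=T^{f\wedge a}$ and $\tau_0^{(a)}(\lambda)=T^{f_{\Delta_\lambda}\wedge a}$: two coding times with values $\le a$ keep the same tree distance after capping, because the infimum of $f$ between them is already $\le a$. The metric part is then controlled uniformly by the standard fact that trees coded by functions within $\varepsilon$ in supremum norm are at Gromov-Hausdorff distance at most $2\varepsilon$, together with $\|f\wedge a-f_{\Delta_\lambda}\wedge a\|_\infty\le\|f-f_{\Delta_\lambda}\|_\infty\le\sup_{|x-y|\le|\Delta_\lambda|}|f(x)-f(y)|\to0$. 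For the mass measures, writing $\nu_a=\ind_{\{f\le a\}}\mathrm{Leb}$ and $\hat\nu_a^\lambda=\lambda^{-1}\sum_{i:f(s_i)\le a}\delta_{s_i}$, one has $\bm^{\ct^{(a)}}=(p^f)_*\nu_a$ and $\bm^{\tau_0^{(a)}(\lambda)}=(p^f)_*\hat\nu_a^\lambda$; since $p^f$ is uniformly continuous it suffices to make $\sup_{a\le a_0}d_\text{P}^{[0,\sigma]}(\nu_a,\hat\nu_a^\lambda)$ small. Here I would use that $a\mapsto\bm^\ct(\ct^{(a)})=\int_0^a\cz_b\,db$ is continuous to choose a finite grid $0=b_0<\cdots<b_m=a_0$ with consecutive increments small, and for $a\in[b_k,b_{k+1}]$ write
\[
d_\text{P}(\nu_a,\hat\nu_a^\lambda)\le d_\text{P}(\nu_a,\nu_{b_k})+d_\text{P}(\nu_{b_k},\hat\nu_{b_k}^\lambda)+d_\text{P}(\hat\nu_{b_k}^\lambda,\hat\nu_a^\lambda).
\]
The two outer terms are bounded, by monotonicity of both families, by the masses of $\nu_{b_{k+1}}-\nu_{b_k}$ and of $\hat\nu_{b_{k+1}}^\lambda-\hat\nu_{b_k}^\lambda$, both close to $\bm^\ct(\ct^{(b_{k+1})})-\bm^\ct(\ct^{(b_k)})$ for large $\lambda$ by the same law of large numbers, while the finitely many middle terms tend to $0$ as above.

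The main obstacle is precisely this last point: upgrading the pointwise-in-$a$ law of large numbers to a bound uniform over the truncation level $a\le a_0$. This is what forces the use of the continuity of the occupation measure $a\mapsto\int_0^a\cz_b\,db$ together with the monotonicity of $a\mapsto\nu_a$ and $a\mapsto\hat\nu_a^\lambda$, so that a single finite grid controls all intermediate levels; everything else reduces to the estimate \reff{eq:distTf-Tfdiscret} and the a.s.\ density of the Poissonian marks.
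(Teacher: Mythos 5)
Your treatment of the first limit is essentially the paper's own proof: realise $\ct$ via its (continuous, compactly supported) height function, identify $\tau_0(\lambda)$ with the discrete sub-tree $T^{f,\Delta_\lambda}$ spanned by a Poisson subdivision of $[0,\sigma]$ with the rescaled empirical mass measure, and let \reff{eq:distTf-Tfdiscret} together with $|\Delta_\lambda|\to0$ and $d_{\text{P}}(\mathrm{Leb},\mu_{\Delta_\lambda})\to0$ do the work. Where you genuinely diverge is the uniform-in-$a$ statement. The paper does not return to the coding functions at all: it sets $\varepsilon_\lambda=d_{\text{GHP}}^c(\ct,\tau_0(\lambda))$ and invokes the general restriction estimate from the proof of Proposition 2.8 in \cite{adh:ghptwms}, namely $d_{\text{GHP}}^c(\ct^{(a)},\tau_0^{(a)}(\lambda))\le 3\varepsilon_\lambda+\bm^\ct\bigl(\ct^{(a+2\varepsilon_\lambda)}\setminus\ct^{(a-\varepsilon_\lambda)}\bigr)$, and then bounds the annulus mass uniformly in $a\le a_0$ by $3\varepsilon_\lambda\sup_{a\le a_0+2\varepsilon_\lambda}\cz_a$ using \reff{eq:int-la}; the second limit is then an immediate corollary of the first. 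Your route instead re-enters the coding picture ($\ct^{(a)}=T^{f\wedge a}$, sup-norm control of the capped height functions, and a finite-grid/monotonicity argument for $\sup_{a\le a_0}d_{\text{P}}(\nu_a,\hat\nu_a^\lambda)$ via the continuity of $a\mapsto\int_0^a\cz_b\,db$). This is correct and self-contained, at the cost of being longer and of one point you should make explicit: to bound $d_{\text{GHP}}^c$ you must realise the Hausdorff and Prohorov estimates in a \emph{common} embedding, so the metric bound and the measure bound should both be read off the single correspondence $x\mapsto(p^{f\wedge a}(x),p^{f_{\Delta_\lambda}\wedge a}(x))$ rather than quoted as two separate facts. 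What the paper's approach buys is exactly the avoidance of this bookkeeping (the annulus inequality packages it once and for all); what yours buys is independence from the external Proposition 2.8.
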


\begin{proof}
   According  to \cite{dlg:rtlpsbp}, there  exists a  continuous stochastic
process  $h$,  called the height  process,  such  that  under its  excursion
measure it  has compact  support $[0,\sigma^h]$ and  $(T^h,\sigma^h)$ is
distributed at $(\ct, \sigma)$  under $\N^\psi$.  Notice that
the continuity of the height  process is a consequence of (H2).
Conditionally
on $h$, let $\cp=\sum_{i \in  I} \delta_{(y_i, t_i)}$ be a Poisson point
measure on $[0, \sigma]\times \R_+$ with intensity $dydt$. For
$\lambda>0$, we set:
\[
\Delta_\lambda=\{y_i; i\in I \text{ and } t_i\leq \lambda\}\cup\{0\}
\quad\text{and}\quad
\mu_{\Delta_\lambda}=\inv{\lambda} \sum_{y\in \Delta_\lambda, y\neq 0} \delta_y.
\]

By construction, we get the following equality in distribution:
\[
(T^h, (T^{h,\Delta_\lambda}, \lambda\geq 0))
\stackrel{\text{(d)}}{=}
(\ct, (\tau_0(\lambda), \lambda\geq 0)).
\]

The properties of the Poisson point measures imply  that a.e. under the
excursion measure of $h$,   $\lim_{\lambda\rightarrow+\infty }
|\Delta_\lambda|=0$ and
$\lim_{\lambda\rightarrow+\infty } d_\text{P}^{[0,\sigma^h]}(\text{Leb},
\mu_{\Delta_\lambda})=0$. Thus, we deduce from Section \ref{sec:d-subtree} and
\reff{eq:distTf-Tfdiscret} that a.e. under the excursion measure of $h$,
\[
\lim_{\lambda\rightarrow+\infty } d_{\text{GHP}}^c(T^h,
T^{h,\Delta_\lambda}) =0.
\]
Thus, we obtain the first part
of \reff{eq:cv-dGHPc}.

We set $\varepsilon_\lambda=d_{\text{GHP}}^c(\ct,
\tau_0(\lambda))$.
According to the proof of Proposition 2.8 in \cite{adh:ghptwms}, we
have, for $a\geq 0$:
\[
d_{\text{GHP}}^c(\ct^{(a)},
\tau_0^{(a)}(\lambda)) \leq  3\varepsilon_\lambda  +
\bm^\ct\left(\ct^{(a+2\varepsilon_\lambda)}\backslash
\ct^{(a-\varepsilon_\lambda)}\right).
\]
Using \reff{eq:int-la} and the definition of $\cz$, we deduce that for
$a_0\geq 0$:
\[
\sup_{a\leq a_0} d_{\text{GHP}}^c(\ct^{(a)},
\tau_0^{(a)}(\lambda)) \leq  3\left(1  +  \sup_{a\leq
    a_0+2\varepsilon_\lambda} \cz_a\right)\varepsilon_\lambda.
\]
We deduce then the second part of \reff{eq:cv-dGHPc} from the first part
of \reff{eq:cv-dGHPc}. This ends the proof of the Lemma.
\end{proof}

\subsection{The super-critical case}
\label{sec:GHP-super}
The main result of this Section is the following Lemma.

\begin{lem}
   \label{lem:cv-dGHP}
Let  $\psi$ be  a super-critical  branching mechanism  satisfying (H1-3).
We have $\N^\psi$-a.e.:
\[
\lim_{\lambda\rightarrow+\infty } d_{\text{GHP}}(\ct,
\tau_0(\lambda))=0.
\]
\end{lem}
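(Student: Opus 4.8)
The plan is to deduce the super-critical case from the (sub)critical Lemma \ref{lem:cv-dGHPc} by transporting almost-everywhere convergence across the Girsanov transformation of Theorem \ref{thesuper}. Since $\psi$ is super-critical, $q_0=\psi^{-1}(0)>0$ and $q_0>\theta^*$, so $\psi_{q_0}'(0)=\psi'(q_0)>0$; thus $\psi_{q_0}$ is sub-critical and still satisfies (H1-3). Applying Lemma \ref{lem:cv-dGHPc} to $\psi_{q_0}$ yields, $\N^{\psi_{q_0}}$-a.e. and for every fixed $a>0$,
\[
\lim_{\lambda\to+\infty} d_{\text{GHP}}^c\bigl(\ct^{(a)},\tau_0^{(a)}(\lambda)\bigr)=0,
\]
and this is the statement I want to push from $\N^{\psi_{q_0}}$ to $\N^\psi$.

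First I would reduce to fixed levels. Recalling that $d_{\text{GHP}}(\ct,\tau_0(\lambda))=\int_0^\infty \expp{-r}\bigl(1\wedge d_{\text{GHP}}^c(\ct^{(r)},\tau_0^{(r)}(\lambda))\bigr)\,dr$ and that the integrand is bounded by $1$ and dominated by $\expp{-r}$, it suffices by dominated convergence to prove that, for each fixed $a>0$, the event
\[
B_a=\Bigl\{\lim_{z\to+\infty} d_{\text{GHP}}^c\bigl(\ct^{(a)},\tau_0^{(a)}(z)\bigr)=0\Bigr\}
\]
has full $\N^\psi$-measure. Indeed, once $\N^\psi[B_a^c]=0$ for every $a$, Fubini gives that $\N^\psi$-a.e. the inner convergence holds for Lebesgue-a.e. $r$; dominated convergence in $r$ along any sequence $z_n\to+\infty$ then gives $d_{\text{GHP}}(\ct,\tau_0(z_n))\to0$, hence $\lim_{\lambda\to+\infty}d_{\text{GHP}}(\ct,\tau_0(\lambda))=0$.

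Then I would transfer $B_a$ from $\N^{\psi_{q_0}}$ to $\N^\psi$. Fix $a>0$ and $\lambda_0>0$; on $\{M_{\lambda_0}\ge1\}$ the event $B_a$ is a measurable functional of the pair $\bigl(\ct^{(a)},\{\tau_0^{(a)}(z),\,z\ge\lambda_0\}\bigr)$, since only large $z$ enter the limit. Running the computation that proves Theorem \ref{thesuper} with a functional $\tilde H(\ct^{(a)},\tau_{0,\lambda_0}^{(a)})$ that also depends on $\ct^{(a)}$ (the factor $\ct^{(a)}$ is merely carried along in the conditioning and does not change the density), one obtains, extending \reff{eq:tau-girsanov},
\[
\N^\psi\!\left[\tilde H\bigl(\ct^{(a)},\tau_{0,\lambda_0}^{(a)}\bigr)\ind_{\{M_{\lambda_0}\ge1\}}\right]
=\N^{\psi_{q_0}}\!\left[\left(\frac{\eta}{\eta-q_0}\right)^{L(a,\tau_0(\lambda_0))}\tilde H\bigl(\ct^{(a)},\tau_{0,\lambda_0}^{(a)}\bigr)\ind_{\{M_{\lambda_0}\ge1\}}\right],
\]
with $\eta=\psi^{-1}(\lambda_0)$ and density $(\eta/(\eta-q_0))^{L(a,\tau_0(\lambda_0))}\in[1,+\infty)$ finite $\N^{\psi_{q_0}}$-a.e. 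Taking $\tilde H=\ind_{B_a^c}$ and using that $B_a^c$ is $\N^{\psi_{q_0}}$-null by the first paragraph, the right-hand side vanishes, so $\N^\psi[B_a^c\cap\{M_{\lambda_0}\ge1\}]=0$. Letting $\lambda_0\to+\infty$, the events $\{M_{\lambda_0}\ge1\}$ increase to $\{\sigma>0\}$, which is $\N^\psi$-full, and continuity from below gives $\N^\psi[B_a^c]=0$.

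The main difficulty is measure-theoretic bookkeeping rather than a new estimate: I must check that the density produced by the computation of Theorem \ref{thesuper} remains the strictly positive, finite function $(\eta/(\eta-q_0))^{L(a,\tau_0(\lambda_0))}$ when the test functional is allowed to depend on $\ct^{(a)}$ as well (so that $\N^\psi$- and $\N^{\psi_{q_0}}$-null sets of $\sigma(\ct^{(a)},\tau_{0,\lambda_0}^{(a)})$ coincide), and that $B_a$ is genuinely $\sigma(\ct^{(a)},\tau_{0,\lambda_0}^{(a)})$-measurable, so that the extended Girsanov identity can be applied to its indicator. Once these points are settled the transfer is automatic, the key structural fact being that level-restriction makes everything compact and thereby makes the finite-level Girsanov applicable even on $\{\sigma=+\infty\}$, where the global transforms \reff{eq:GN} are unavailable.
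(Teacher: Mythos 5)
Your proposal is correct and follows essentially the same route as the paper: both deduce the super-critical case from Lemma \ref{lem:cv-dGHPc} applied to the sub-critical mechanism $\psi_{q_0}$, transferring the null set of non-convergence back to $\N^\psi$ via the finite-level Girsanov identity of Theorem \ref{thesuper} with its $\N^{\psi_{q_0}}$-a.e.\ finite density $\left(\eta/(\eta-q_0)\right)^{L(a,\tau_0(\lambda_0))}$. The paper applies that identity directly to the indicator of $\left\{\liminf_{\lambda\rightarrow+\infty}\int_0^a \expp{-r}\left(1\wedge d^c_{\text{GHP}}\left(\ct^{(r)},\tau_0^{(r)}(\lambda)\right)\right)dr>0\right\}$ with $\lambda=1$ fixed, and is less explicit about the two bookkeeping points you flag (allowing the functional to depend on $\ct^{(a)}$, and the restriction to $\{M_{\lambda_0}\geq 1\}$), so your added care tightens but does not alter the argument.
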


\begin{proof}
We deduce
from Theorem   \ref{thesuper}
that for  $a>0$:
\begin{multline*}
\N^{\psi}\left[\liminf_{\lambda\rightarrow+\infty }
\int_0^a  \expp{-r} \left(1 \wedge
d^c_{\text{GHP}}\left(\ct^{(r)}, \tau_0^{(r)}(\lambda) \right)
\right) \ dr>0\right]\\
= \N^{\psi_{q_0}}\left[\left(\frac{\psi^{-1}(1)}{\psi^{-1}(1)-
      q_0}\right)^{L({a},\tau_0(1))} , \,
\liminf_{\lambda\rightarrow+\infty}
\int_0^a  \expp{-r} \left(1 \wedge
d^c_{\text{GHP}}\left(\ct^{(r)}, \tau_0^{(r)}(\lambda) \right)
\right) \ dr>0\right].
\end{multline*}
Then use \reff{eq:cv-dGHPc} to get that the right hand-side in the
previous equality is 0 for all $a>0$. This implies that $
   \N^{\psi}\left[\liminf_{\lambda\rightarrow+\infty } d_{\text{GHP}}(\ct,
\tau_0(\lambda))>0 \right]=0$.
\end{proof}

\section{Pruning and growth of the discrete sub-trees}
\label{sec:subgrowth}

\subsection{The pruning process}

We define the following pruning procedure for the discrete sub-trees.   Under
$\P^{\psi,\lambda}$, let $\cT$ be distributed as $\tau_0(\lambda)$.
Conditionally   on  $\cT$,   we   consider  a   Poisson  point   measure
$\cm^\Sk(d\theta, dy)$ on $\R_+\times \cT$ with intensity:
\[
\psi''(\eta+\theta) \ind_{[0,+\infty)}(\theta)d\theta\,
\ell^\cT(dy)
\]
and an independent family  of independent random variables $(\xi_x, x\in
\Br(\cT))$, such that the distribution of $\xi_x$  has density:
\[
-\frac{\psi^{(\kappa_x+1)}(\eta+z)}{\psi^{(\kappa_x)}(\eta)} \, \ind_{\{z>0\}} dz.
\]
We define the mark process:
\[
\cm^\cT(d\theta,dy)= \cm^\Sk(d\theta, dy) + \sum_{x\in \Br(\cT)}
 \delta_{(\xi_x,x)} (d\theta, dy).
\]
For every $x\in\cT$, we consider the corresponding record process on $\cT$:
\[
\theta^\cT(x)=\inf\{\theta>0,\ \cm^\cT([0,\theta]\times \lb\emptyset,x\rb)>0\}.
\]
We define the pruned tree at time $q\geq 0$ as:
\[
\cT_q=\{x\in\cT,\ \theta^\cT(x)\ge q\}
\]
with  the  induced  metric,   root  $\emptyset$  and  mass  measure
$
\bm^{\cT_q}=\inv{\psi_q(\eta)} \sum_{x\in \Lf(\cT_q)} \delta_x$.
Then we have the following theorem.
\begin{theo}
\label{theo:pruning}
Let $\psi$ be a  branching mechanism satisfying (H1-3). Let $\lambda\geq
0$     such     that     $\eta=\psi^{-1}(\lambda)>0$.     Then     under
$\P^{\psi,\lambda}$,    the   two    processes   $(\tau_\theta(\lambda),
\theta\geq  0)$   and  $(\cT_\theta,   \theta\geq  0)$  have   the  same
distribution.
\end{theo}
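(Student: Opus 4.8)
The plan is to realise both processes as the record-process pruning of a single discrete tree $\cT$ distributed as $\tau_0(\lambda)$, and to show that the marks driving the two prunings have the same conditional law given $\cT$. For the continuous side this is immediate: by \reff{subtree} and the construction in Section~\ref{sec:pruning}, a point $x\in\tau_0(\lambda)$ lies in $\tau_\theta(\lambda)=\tau_0(\lambda)\cap\ct_\theta$ if and only if $\theta(x)\ge\theta$, where $\theta(x)$ is the first time a mark of $M^{\ct_0}$ hits $\lb\emptyset,x\rb$. As $\lb\emptyset,x\rb\subset\tau_0(\lambda)$ for $x\in\tau_0(\lambda)$, the restriction of the record process to $\cT=\tau_0(\lambda)$ depends only on the marks of $M^{\ct_0}$ carried by the skeleton of $\cT$ and by the infinite branching points of $\ct_0$ lying on $\cT$. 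Since $(\tau_\theta(\lambda),\theta\ge0)$ is a deterministic functional of $(\cT,\theta(\cdot)|_\cT)$ and $(\cT_\theta,\theta\ge0)$ is the same functional of $(\cT,\theta^\cT(\cdot))$, it suffices to show that, conditionally on $\cT$, these induced marks have the law of $\cm^\cT$.

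To identify the conditional law I would condition on $\tau_0(\lambda)=\cT$ and use the decomposition of $\ct_0$ in Theorem~\ref{theo:decomp-tree}, which holds for any branching mechanism satisfying (H1-3) by Remark~\ref{rem:decomp-super}. This decomposition locates the infinite branching points of $\ct_0$ sitting on $\cT$: along the edges they are the atoms of the $\int r\,\pi_\eta(dr)\,\P_r^{\psi_\eta}$ part of $\bN^{\psi_\eta}$ in \reff{eq:def-bN}, so given $\cT$ they form a Poisson point measure on $\Sk(\cT)\times(0,+\infty)$ with intensity $\ell^\cT(dy)\,r\expp{-\eta r}\pi(dr)$, a node of mass $r$ carrying the grafted subtree without marked leaves; at a branching point $x\in\Br(\cT)$ with $\kappa_x$ children the associated node has mass $\Delta_x$ distributed as $\Gamma^\psi_{\kappa_x,\lambda}(dr)$, see \reff{eq:def-Gamma}. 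Theorem~\ref{theo:decomp-tree} makes these data conditionally independent given $\cT$, and, given $\ct_0$, the marks of $M^{\ct_0}$ are independent: a node of mass $r$ is first marked at an independent time with distribution $\mathrm{Exp}(r)$, while the skeleton marks form an independent Poisson measure of intensity $2\beta\,d\theta\,\ell^\cT(dy)$ on the edges.

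The core of the argument is a marginalisation over the node masses, carried out edge by edge. On a fixed edge the cut times produced by the nodes are the images of the Poisson measure $\ell^\cT(dy)\,r\expp{-\eta r}\pi(dr)$ under the independent marking $r\mapsto\mathrm{Exp}(r)$; by the mapping and superposition theorems for Poisson measures, combining them with the skeleton marks yields a Poisson measure on $[0,+\infty)\times\Sk(\cT)$ with intensity
\[
d\theta\,\ell^\cT(dy)\left(2\beta+\int_{(0,+\infty)}r^2\expp{-(\eta+\theta)r}\pi(dr)\right)
=\psi''(\eta+\theta)\,d\theta\,\ell^\cT(dy),
\]
the last equality being the explicit form of $\psi''$ read from \reff{eq:psi}. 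This is exactly the intensity of $\cm^\Sk$, and since the record process along the edge depends only on this superposed cut measure, the node masses are correctly integrated out. At a branching point $x$ the node is first marked at a time that, conditionally on its mass $\Delta_x$, has density $r\expp{-rz}$; integrating against $\Gamma^\psi_{\kappa_x,\lambda}(dr)$ and using $\psi^{(n)}(\mu)=(-1)^n\int_{(0,+\infty)}r^n\expp{-\mu r}\pi(dr)$ for $n\ge3$ gives the prescribed density $-\psi^{(\kappa_x+1)}(\eta+z)/\psi^{(\kappa_x)}(\eta)$, the mass $2\beta/\psi''(\eta)$ at $\{+\infty\}$ (binary points never marked at $x$) being accounted for by the $\delta_0$ term of $\Gamma^\psi_{2,\lambda}$.

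Combining the two computations shows that, conditionally on $\cT$, the marks induced by $M^{\ct_0}$ on $\cT$ coincide in law with $\cm^\cT$; hence the restricted record process has the law of $\theta^\cT$ and the two tree-valued processes agree in distribution, the mass measures matching since both are given by \reff{eq:mass-tql}. I expect the marginalisation to be the delicate step: one must read the exact node intensities and the conditional independence off Theorem~\ref{theo:decomp-tree}, so that the Poisson mapping theorem applies and the skeleton and nodal contributions recombine into the single intensity $\psi''(\eta+\theta)$, with the remaining bookkeeping at branching points reducing to the elementary identities for the derivatives of $\psi$.
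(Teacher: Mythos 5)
Your proof is correct and rests on the same ingredients as the paper's: Theorem \ref{theo:decomp-tree} (extended to the super-critical case via Remark \ref{rem:decomp-super}), the superposition of the $2\beta$ skeleton marks with the exponentially marked Poissonian nodes to produce the intensity $\psi''(\eta+\theta)\,d\theta\,\ell^{\cT}(dy)$, and the integration of the exponential first-mark density against $\Gamma^\psi_{\kappa_x,\lambda}$ at the branching points (including the defective mass $2\beta/\psi''(\eta)$ at $+\infty$ for binary points). The only difference is organizational: you identify the full conditional law of the induced mark process given $\cT=\tau_0(\lambda)$ in one step, whereas the paper reduces to two-dimensional marginals via the Markov property and computes, for each $q\le\theta$, the law of the marks falling on $\tau_q(\lambda)$ during $[q,\theta]$ from the decomposition of $\ct_q$ given $\tau_q(\lambda)$ supplied by Proposition \ref{prop:marginal}.
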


\begin{proof}
  The  proof  is  based  on Theorem  \ref{theo:decomp-tree}  and  Remark
  \ref{rem:decomp-super}.  Notice that the  processes $(\tau_\theta(\lambda),
  \theta\geq  0)$ and  $(\cT_\theta, \theta\geq  0)$ are  by construction
  Markov and right continuous. Therefore, it is enough to check the
  two-dimensional marginals have the same distribution.

  Let $\theta\geq  q\geq 0$.
Recall  the pruning  procedure defined  in Section
\ref{sec:pruning}. On  one hand, a
mark appears on the  skeleton of $\tau_q(\lambda)$ before time $\theta$,
if this is a mark which appears before time $\theta$ and which is either
on the skeleton of $\ct_q$ or on a branching point of $\ct_q$. Those marks
which  initially are  on  the  skeleton of  $\ct_q$  are distributed  on
$\tau_q(\lambda)$   according  to   a  Poisson   point  measure   with  intensity
$2\beta(\theta-q) \ell^{\tau_q(\lambda)}(dy)  $. A node  of $\ct_q$ with
mass   $r$  has   a   mark  before   time   $\theta$  with   probability
$1-\expp{-(\theta-q) r}$. And the nodes  of $\ct_q$ with mass $r$ which lies
on  the   skeleton  of  $\tau_q(\lambda)   $  are,  thanks   to  Theorem
\ref{theo:decomp-tree}, distributed  on $\tau_q(\lambda)$ according  to a Poisson
point   measure   with   intensity   $  r\expp{-r\eta   -rq}   \pi(dr)\,
\ell^{\tau_q(\lambda)}  (dy)$.  This  implies  that  the  marks  on  the
skeleton  of  $\tau_q(\lambda)$  before  time $\theta$  are  distributed
according to a Poisson point measure with intensity:
\begin{align*}
 \left[2\beta(\theta-q) +
\int_{(0,+\infty )} (1-\expp{-(\theta-q) r} )r\expp{-r\eta -rq} \pi(dr)
\right]\, \ell^{\tau_q(\lambda)}(dy)
&= \left[\psi_{\theta}' (\eta)-\psi_q' (\eta)\right] \,
\ell^{\tau_q(\lambda)}(dy) \\
&= \int_q^\theta \psi''(\eta+z) dz \,
\ell^{\tau_q(\lambda)}(dy) .
\end{align*}

On the other hand, if $x$  is a node of $\tau_q(\lambda)$ with number of
children $\kappa_x$, then a mark  appears on it before time $\theta$, if
it  appears before time  $\theta$ on  $\ct_q$. According  to Proposition
\ref{prop:marginal},  $(\ct_q,\tau_q(\psi_q(\eta)))$  is distributed  as
$(\ct_0,  \tau_0(\lambda))$ under $\N^{\psi_q}$.   We deduce,  thanks to
Theorem   \ref{theo:decomp-tree},   that    the   mass   $\Delta_x$   is
conditionally    on    $\tau_0(\lambda)$    distributed   according    to
$\Gamma^{\psi_q}_{\kappa_x,      \psi_q(\eta)}     $      defined     by
\reff{eq:def-Gamma}.  Therefore  a  mark  appears  on the  node  $x$  of
$\tau_q(\lambda)$ before time $\theta$ with probability:
\[
 \int \Gamma_{\kappa_x,\psi_q(\eta)}^{\psi_q}(dr)(1-\expp{-(\theta-q)r})=
 1-\frac{\psi_{\theta}^{(\kappa_x)} (\eta)}{\psi_q^{(\kappa_x)} (\eta)}
=\P(\xi_x<\theta|\xi_x>q).
\]
By  construction  of  $\cT_\theta$  from  $\cT_q$, we  deduce  that  the
distribution of $\cT_\theta$ conditionally  on $\{\cT_q=T\}$ is the same
as   the  distribution   of  $\tau_\theta(\lambda)$   conditionally  on
$\{\tau_q(\lambda)=T\}$.    Then    use   that   $\cT_0$    is   distributed   as
$\tau_0(\lambda)$, to deduce that $\cT_\theta$ has the same distribution
as  $\tau_\theta(\lambda)$.  Thus,  we    get  that  the  processes
$(\tau_\theta(\lambda), \theta\geq 0)$  and $(\cT_\theta, \theta\geq 0)$
have the same two-dimensional marginals distribution.
\end{proof}

\begin{rem}
   \label{rem:shift}
By construction of $\cT$ and thanks to Proposition \ref{prop:marginal},
we get that $(\cT_{\theta+q}, \theta\geq 0)$ under $\P^{\psi,\lambda}$ is
distributed as $(\cT_\theta, \theta\geq 0)$ under $\P^{\psi_q,
  \psi_q(\eta)}$. 
\end{rem}

\subsection{The growth process}
Let $\lambda>0$.  Theorem \ref{theo:pruning} gives the pruning procedure
of  the  sub-tree  process.   Conversely,  we will  also  give  a  growth
procedure for the time reversed  sub-tree process. However,  if $\ct_\theta$
can be defined on $\Theta^\psi$ simultaneously, this is no more the case
for  $\tau_\theta(\lambda)$.  Recall $\psi_\theta(\eta)\geq  \lambda> 0$
for $\theta\geq 0$.  We define:
\begin{equation}
   \label{eq:def-q-l}
\theta_\lambda=\inf\{\theta\in \Theta^\psi; \psi_\theta(\eta)\geq 0\}
\quad\text{and}\quad
\Theta^{\psi, \lambda}=[\theta_\lambda, +\infty ) \cap \Theta^\psi.
\end{equation}
Notice that $\theta_\lambda\leq 0$. Theorem \ref{theo:pruning},
Remark \ref{rem:shift} and the
Kolmogorov extension theorem insure that there exists a process
$(\tau_\theta(\lambda), \theta\in \Theta^{\psi,\lambda})$ under
$\P^{\psi,\lambda}$, such that for all $q\in \Theta^{\psi,\lambda}$ the process
$(\tau_{\theta+q}(\lambda),  \theta\geq 0)$ is distributed as
$(\tau_{\theta}(\psi_q(\eta)), \theta\geq 0)$ under $\P^{\psi_q,\psi_q(\eta)}$.

We consider the  function
$g_{(\psi, \lambda)}^{q,\theta}$ defined for $q\in
\Theta^{\psi,\lambda}$ and $\theta> q$ by:
\begin{equation}
   \label{eq:g-psil-qq}
g_{(\psi, \lambda)}^{q,\theta}(r)
=1-\frac{\psi_\theta(\eta(1-r))-
  \psi_q(\eta(1-r))}{ \psi_\theta(\eta)}\cdot
\end{equation}
Notice that $\psi_q(\eta)>0$ and thus $g_{(\psi,\lambda)}^{q,\theta}(1)=1$, $
g_{(\psi,\lambda)}^{q,\theta}(0)=\psi_q(\eta)/\psi_{\theta}(\eta)$
and for $k\in \N^*$:
\[
\left(g_{(\psi,
  \lambda)}^{q,\theta}\right)^{(k)}\!\!\!(0)
=
\frac{(-1)^{k+1}\eta^{k}}{\psi_\theta(\eta)}(\psi^{(k)}
(\theta+\eta)-\psi^{(k)}(q+\eta))  \geq 0.
\]
Since  $\psi$ is  analytical  at least  on  $(\theta_\lambda,+\infty )$,  we
deduce  that  $g_{(\psi,   \lambda)}^{q,\theta}(r)$  is  the  generating
function of a  random variable $K$ taking values  in $\N$. Let $(\tau^k,
k\in \N^*)$ be independents random trees distributed as $\tau_q(\lambda)$
under $\P^{\psi,\lambda}$ and independent of $K$.  We set:
\[
{\G}_{q,\theta}(\psi, \lambda)= \emptyset \circledast_{1\leq k\leq
  K}(\tau^k,\emptyset) ,
\]
with the convention
that $\emptyset \circledast_{1\leq k\leq K}
(\tau^k,\emptyset)=\emptyset$ if $K=0$.

\begin{theo}
   \label{theo:growth}
   Let  $\psi$  be  a  branching  mechanism satisfying  (H1-3).   Let
   $\lambda>0$ and $\eta=\psi^{-1}(\lambda)$. Let  $
   \theta>  q$  with   $q\in  \Theta^{\psi,  \lambda}$.   Then  under
   $\P^{\psi,\lambda} $, conditionally  on $\tau_\theta(\lambda)$,
   $\tau_q(\lambda)$ is distributed as
\[
\tau_\theta(\lambda) \circledast_{x\in \Lf(\tau_\theta(\lambda))}
(\cg_q^x, x),
\]
with mass measure given by \reff{eq:mass-tql} (with $\theta$ replaced by
$q$) and 
where $(\cg_q^x,x\in \Lf(\tau_\theta(\lambda)))$ are independent and
distributed according to ${\G}_{q,\theta}(\psi, \lambda)$.
\end{theo}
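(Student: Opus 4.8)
The plan is to identify the backward transition of the Markov process $(\tau_\theta(\lambda))_\theta$ by matching two-dimensional marginals, exactly as in the proof of Theorem \ref{theo:pruning}. Since both the pruning process and the growth process (the latter built by Kolmogorov extension just above the statement) are Markov and right-continuous, it suffices to verify, for $\theta>q$, that the conditional law of $\tau_q(\lambda)$ given $\tau_\theta(\lambda)$ is the stated grafting at leaves. By the shift identity of Remark \ref{rem:shift} I reduce to $q=0$, and I read the two marginals from Corollary \ref{cor:tau-law}: $\tau_\theta(\lambda)\sim\cg(\psi_\theta,\psi_\theta(\eta))$ and $\tau_0(\lambda)\sim\cg(\psi,\lambda)$, with offspring laws $p_{(\psi_\theta,\psi_\theta(\eta))}$, $p_{(\psi,\lambda)}$ of \reff{eq:p-psi-lambda}.

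The first substantive step is to describe how $\tau_\theta(\lambda)$ sits inside $\tau_0(\lambda)$ using Theorem \ref{theo:pruning} and Theorem \ref{theo:decomp-tree}. Along each edge of $\tau_0$ (an exponential branch of rate $c_0=\psi'(\eta)$ leading to the next feature of $\tau_0$), the skeleton marks before time $\theta$ form a Poisson process of spatial rate $s=\int_0^\theta\psi''(\eta+z)\,dz=\psi'(\eta+\theta)-\psi'(\eta)$, while a branch point with $n$ children is node-marked before $\theta$ with probability $1-\psi^{(n)}(\eta+\theta)/\psi^{(n)}(\eta)$ (the $q=0$ instance of the computation in the proof of Theorem \ref{theo:pruning}). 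Hence, running along $\tau_\theta$ from a feature to the next, the edge length is the minimum of an $\mathrm{Exp}(s)$ skeleton cut and the $\mathrm{Exp}(c_0)$ distance to the next feature of $\tau_0$; this minimum is $\mathrm{Exp}(s+c_0)=\mathrm{Exp}(\psi'(\eta+\theta))$, which is precisely the edge rate $c_\theta=\psi_\theta'(\eta)$ of $\cg(\psi_\theta,\psi_\theta(\eta))$. A leaf of $\tau_\theta$ thus arises in exactly three ways, and by the branching property (iv) and the independence in Theorem \ref{theo:decomp-tree} the part of $\tau_0$ beyond it is an independent copy of $\tau_0(\lambda)$: a skeleton cut beyond which one branch continues ($K=1$); an original marked leaf with nothing beyond ($K=0$); or a node-marked branch point of degree $n\ge 2$ beyond which its whole bush is restored ($K=n$).

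The core computation is to check that this per-leaf offspring number $K$ has generating function $g^{0,\theta}_{(\psi,\lambda)}$ of \reff{eq:g-psil-qq}. The unconditional probability that the next feature is a leaf of the given type is $s/c_\theta$ for the skeleton cut, $(c_0/c_\theta)\,p_{(\psi,\lambda)}(0)$ for a marked leaf, and $(c_0/c_\theta)\,p_{(\psi,\lambda)}(n)\,(1-\psi^{(n)}(\eta+\theta)/\psi^{(n)}(\eta))$ for a node-marked branch point of degree $n$. Summing these, and using that $\sum_{n\ge 2}\eta^{n-1}|\psi^{(n)}(\eta)|/n!=\psi'(\eta)(1-p_{(\psi,\lambda)}(0))=\psi'(\eta)-\lambda/\eta$ together with the analogous identity at $\eta+\theta$, the total leaf-probability collapses to $\psi_\theta(\eta)/(\eta\psi_\theta'(\eta))=p_{(\psi_\theta,\psi_\theta(\eta))}(0)$; dividing by this normalization yields $P(K=0)=\lambda/\psi_\theta(\eta)$, $P(K=1)=\eta(\psi'(\eta+\theta)-\psi'(\eta))/\psi_\theta(\eta)$, and for $n\ge 2$, $P(K=n)=(-1)^{n+1}\eta^n(\psi^{(n)}(\eta+\theta)-\psi^{(n)}(\eta))/(n!\,\psi_\theta(\eta))$, which are exactly $g^{0,\theta}(0)$, $(g^{0,\theta})'(0)$ and $(g^{0,\theta})^{(n)}(0)/n!$ as recorded before the statement. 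Since each regrown subtree is a fresh exponential branch of rate $c_0$ followed by $p_{(\psi,\lambda)}$-offspring, it is an independent copy of $\tau_0(\lambda)$, so the object grafted at each leaf is $\emptyset\circledast_{1\le k\le K}(\tau^k,\emptyset)=\G_{0,\theta}(\psi,\lambda)$; the mass measure \reff{eq:mass-tql} (with $\theta$ replaced by $0$) is recovered as the uniform measure of total weight $1/\psi(\eta)$ on the leaves of the grafted tree.

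The step I expect to be most delicate is the edge-rate bookkeeping together with the sign analysis of $\psi^{(n)}$ in the generating-function collapse: one must verify the competing-exponentials identity $s+c_0=\psi_\theta'(\eta)$, and then that the skeleton ($K=1$) term and the node ($K\ge 2$) terms assemble, after the $\eta^{n-1}$-versus-$\eta^n$ shift and the normalization by $\psi_\theta(\eta)$ rather than by $\psi'(\eta)$, into a genuine probability generating function summing to $1$ at $r=1$. The combinatorial identity $1+\sum_{x\in\Br(\G)}(\kappa_x-1)=L(\G)$ from the proof of Lemma \ref{lem:super} is the bridge I would use to pass from this per-leaf description to the global law of the grafted tree, confirming that grafting independent copies of $\G_{0,\theta}(\psi,\lambda)$ at the leaves of $\tau_\theta(\lambda)$ reproduces $\tau_0(\lambda)$ conditionally on $\tau_\theta(\lambda)$.
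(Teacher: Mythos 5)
Your proposal is correct, but it takes a genuinely different route from the paper's. The paper works at the continuum level: it applies the special Markov property (Theorem \ref{theo:markov-special}) to write $\ct_q=\ct_{\theta}\circledast_{j}(\ct^j_q,x_j)$ with the grafted continuum trees forming a Poisson point measure of intensity $\bm^{\ct_\theta}(dx)\,\cn^{\psi_q}_{\theta-q}[d\ct]$, notes that $\ct^j_q$ contributes to $\tau_q(\lambda)$ exactly when it carries a marked leaf (an event of $\cn^{\psi_q}_{\theta-q}$-measure $\psi_\theta(\eta)-\psi_q(\eta)$), identifies the law of that contribution via Lemma \ref{lem:cn-tl} (a root-degree generating-function computation under $\cn^{\psi_q}_{\theta-q}[\,\cdot\,|\,M_{\psi_q(\eta)}\geq 1]$), and concludes by Poisson thinning that each leaf of $\tau_\theta(\lambda)$ independently receives nothing with probability $\psi_q(\eta)/\psi_\theta(\eta)$ or a conditioned copy of $\cg_{q,\theta}(\psi,\lambda)$ otherwise. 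You instead stay at the level of the discrete real trees, take the forward pruning of Theorem \ref{theo:pruning} as input, and invert it by a competing-exponentials/renewal analysis along $\tau_0(\lambda)$; your three leaf types are exhaustive, the rate identity $s+c_0=\psi'_\theta(\eta)$ is right, and the three weights, normalized by the total leaf probability $\psi_\theta(\eta)/(\eta\psi'_\theta(\eta))=p_{(\psi_\theta,\psi_\theta(\eta))}(0)$, do reproduce exactly the coefficients of $g^{0,\theta}_{(\psi,\lambda)}$ recorded before the statement. Your route is more elementary and simultaneously re-derives $\tau_\theta(\lambda)\sim\cg(\psi_\theta,\psi_\theta(\eta))$; the paper's route avoids the case analysis and handles general $q$ directly, whereas you pass through the Kolmogorov extension and should record the (routine) check that $\G_{0,\theta-q}(\psi_q,\psi_q(\eta))$ and $\G_{q,\theta}(\psi,\lambda)$ have the same law. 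Two small points: the conditional independence of the regrown subtrees across the leaves of $\tau_\theta(\lambda)$ comes from the branching property of $\cg(\psi,\lambda)$ and the independence of the marks on disjoint subtrees, not from the identity $1+\sum_{x\in\Br(\G)}(\kappa_x-1)=L(\G)$ invoked in your last paragraph, which plays no role here; and the expansion $\psi(\theta)=\sum_{n\geq 0}\psi^{(n)}(\eta+\theta)(-\eta)^n/n!$ behind your ``analogous identity at $\eta+\theta$'' deserves a word on convergence, the same analyticity point the paper makes when verifying that $g^{q,\theta}_{(\psi,\lambda)}$ is a genuine generating function.
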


We first state a preliminary Lemma.

\begin{lem}
   \label{lem:cn-tl}
Under the Hypothesis of Theorem \ref{theo:growth}, the sub-tree
$\tilde \tau_0(\psi_q(\eta))$ 
is distributed under  $\cn_{\theta-q}^{\psi_q} [\,  \cdot\, |
M_{\psi_q(\eta)}\geq 1]$ as $\cg_{q,\theta}(\psi,\lambda)$ conditionally on
$\cg_{q,\theta}(\psi,\lambda)\neq \emptyset$.
\end{lem}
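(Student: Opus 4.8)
The plan is to match two descriptions of one random rooted real tree. Set $\mu:=\psi_q(\eta)$. On one side is $\tilde\tau_0(\mu)$, built by Poissonian selection of leaves at rate $\mu$ under the $\sigma$-finite measure $\cn_{\theta-q}^{\psi_q}$ and conditioned on $\{M_\mu\geq1\}$; on the other side is $\cg_{q,\theta}(\psi,\lambda)=\emptyset \circledast_{1\leq k\leq K}(\tau^k,\emptyset)$ conditioned on being non-empty, i.e.\ on $\{K\geq1\}$. Each is rooted at $\emptyset$ and carries a random number of independent subtrees grafted at the root, so it suffices to show that (i) those subtrees have, on both sides, the same law and are conditionally independent, and (ii) their number has the same law. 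For the law of the subtrees, I would apply Corollary~\ref{cor:tau-law} to the branching mechanism $\psi_q$ at parameter $\mu$: under $\P^{\psi_q,\mu}=\N^{\psi_q}[\,\cdot\,|M_\mu\geq1]$ the tree $\tilde\tau_0(\mu)$ is the Galton--Watson real tree $\cg(\psi_q,\psi_q(\eta))$, which is exactly the common law of the building blocks $\tau^k\sim\tau_q(\lambda)$ under $\P^{\psi,\lambda}$ used to define $\cg_{q,\theta}(\psi,\lambda)$ (again by Corollary~\ref{cor:tau-law}). This reduces (i) to a structural decomposition and (ii) to a generating-function identity.

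For (i) I would split $\cn_{\theta-q}^{\psi_q}$ according to \reff{eq:cn-q} into the skeletal part $2\beta(\theta-q)\N^{\psi_q}$ and the nodal part $\int \expp{-qr}(1-\expp{-(\theta-q)r})\pi(dr)\,\P_r^{\psi_q}$. Under $\N^{\psi_q}$ the selected tree is a single $\cg(\psi_q,\mu)$, whose root has exactly one child, so this part contributes the value $K'=1$ with total weight $2\beta(\theta-q)\,\N^{\psi_q}[M_\mu\geq1]$. Under $\P_r^{\psi_q}$ the root is an infinite branching point carrying a Poisson collection of $\N^{\psi_q}$-excursions; Poissonian thinning then shows that the number of excursions retaining at least one selected leaf is Poisson with parameter $r\,\N^{\psi_q}[M_\mu\geq1]$, and, conditionally on that number, the surviving excursions are independent copies of $\tilde\tau_0(\mu)$ under $\N^{\psi_q}[\,\cdot\,|M_\mu\geq1]$, each grafted at the root. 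Here I must use $\N^{\psi_q}[M_\mu\geq1]=\psi_q^{-1}(\mu)=\eta$; this is where $q\in\Theta^{\psi,\lambda}$ enters, since $q\geq\theta_\lambda$ together with the convexity of $\psi$ forces $q+\eta>\theta^*$ (equivalently $\eta\in\Theta_*^{\psi_q}$), so the distinguished inverse $\psi_q^{-1}$ really sends $\psi_q(\eta)$ back to $\eta$.

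It remains to identify the law of the root-degree $K'$ of $\tilde\tau_0(\mu)$ under $\cn_{\theta-q}^{\psi_q}[\,\cdot\,|M_\mu\geq1]$. Writing $G(s)=\sum_{k\geq1}w_k s^k$ for the corresponding un-normalised generating function, the previous step gives
\[
G(s)=2\beta(\theta-q)\eta\,s+\int_{(0,\infty)}\expp{-qr}\left(1-\expp{-(\theta-q)r}\right)\left(\expp{-r\eta(1-s)}-\expp{-r\eta}\right)\pi(dr).
\]
Using the explicit form \reff{eq:psi} of $\psi$, a direct computation yields
\[
\psi_\theta(\eta(1-s))-\psi_q(\eta(1-s))=2\beta(\theta-q)\eta(1-s)+\int_{(0,\infty)}\expp{-qr}\left(1-\expp{-(\theta-q)r}\right)\left(1-\expp{-r\eta(1-s)}\right)\pi(dr),
\]
whence $G(1)-G(s)=\psi_\theta(\eta(1-s))-\psi_q(\eta(1-s))$ and, taking $s=0$ (or invoking \reff{eq:bN-moment}), $G(1)=\psi_\theta(\eta)-\psi_q(\eta)$. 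Dividing, one checks that $G(s)/G(1)$ coincides with $\bigl(g_{(\psi,\lambda)}^{q,\theta}(s)-g_{(\psi,\lambda)}^{q,\theta}(0)\bigr)/\bigl(1-g_{(\psi,\lambda)}^{q,\theta}(0)\bigr)$, which by \reff{eq:g-psil-qq} and $g_{(\psi,\lambda)}^{q,\theta}(0)=\psi_q(\eta)/\psi_\theta(\eta)$ is exactly the generating function of $K$ conditioned on $\{K\geq1\}$. This is the root-degree of $\cg_{q,\theta}(\psi,\lambda)$ conditioned on $\cg_{q,\theta}(\psi,\lambda)\neq\emptyset$, completing the identification.

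The main obstacle is the bookkeeping of the conditioning. The tree $\cg_{q,\theta}(\psi,\lambda)$ is empty precisely when $K=0$, with $\P(K=0)=g_{(\psi,\lambda)}^{q,\theta}(0)=\psi_q(\eta)/\psi_\theta(\eta)>0$, and this must be matched against the event $\{M_\mu\geq1\}$ on the $\cn$-side. A naive comparison of $G(s)/G(1)$ with $g_{(\psi,\lambda)}^{q,\theta}(s)$ would fail: they differ exactly by the conditioning on $\{K\geq1\}$, and it is only after conditioning both sides on non-triviality that the algebra closes. The accompanying subtlety is the identity $\psi_q^{-1}(\psi_q(\eta))=\eta$, which is not automatic for general $q$ and must be extracted from the defining inequality $q\geq\theta_\lambda$ of $\Theta^{\psi,\lambda}$ via the convexity of $\psi$; without it the Poisson parameter $r\eta$, and hence the final generating function, would be wrong.
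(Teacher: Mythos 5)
Your proof is correct and follows essentially the same route as the paper's: reduce the identification to the law of the root degree, split $\cn_{\theta-q}^{\psi_q}$ into its $2\beta(\theta-q)\N^{\psi_q}$ and $\P_r^{\psi_q}$ parts to compute the (un-normalised) generating function of that degree, and match it with the generating function of $K$ conditioned on $\{K\geq 1\}$. You are somewhat more explicit than the paper on two points it leaves implicit — that the root's subtrees are i.i.d.\ copies of $\cg(\psi_q,\psi_q(\eta))$ via Corollary~\ref{cor:tau-law}, and that $\psi_q^{-1}(\psi_q(\eta))=\eta$ for $q\in\Theta^{\psi,\lambda}$ — but the substance is identical.
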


\begin{proof}
  By construction  of $\cg_{q,\theta}(\psi,\lambda)$, the  Lemma will be
  proved  as  soon  as  we  check   that  the  degree  of  the  root  of
  $\tau_0(\psi_q(\eta))$  under  $\cn_{\theta-q}^{\psi_q}\left[\, \cdot\, |\,
    M_{\psi_q(\eta)}\geq  1 \right]$  is  distributed as  $K$ conditionally  on
  $\{K\geq 1\}$.

Without loss of generality, we may write $\psi$, $\lambda$ and $\theta$ for
$\psi_q$, $\psi_q(\eta)$  and $\theta-q$, that is assume $q=0$.
Let $N_\emptyset$ be the degree of the root $\emptyset$ in
$\tau_0(\lambda)$. Notice that $\{M_\lambda\geq 1\}=\{N_\emptyset\geq
1\}$. We set $h(u)=\cn_{\theta}^{\psi}\left[u^{N_\emptyset}\ind_ {\{N_\emptyset\geq
       1\}}\right]$. 
Notice that, under $\N^{\psi}$, $N_\emptyset$
is 0 or 1 and that, under $\P^{\psi}_r$,
$N_\emptyset$ is a Poisson random variable with mean
$r\N^{\psi}\left[M_\lambda\geq 1 \right]=r \eta$. We deduce
that for $u\in [0,1]$:
\begin{align*}
 h(u)
&=2 \beta \theta u \N^{\psi} \left[M_\lambda\geq 1 \right]
+ \int_{(0,+\infty )} \pi(dr) (1-\expp{-\theta r}) \E_r^\psi
\left[u^{N_\emptyset} \ind_{\{N_\emptyset\geq 1\}}\right]\\
&=2 \beta \theta \eta u 
+ \int_{(0,+\infty )} \pi(dr) (1-\expp{-\theta r}) (\expp{-r\eta (1-u)}
-\expp{-r\eta} ).
\end{align*}
Let $g_0=g^{0,\theta}_{(\psi,\lambda)}$ be the generating function of
$K$ and $g_1$ be the generating function of
$K$ conditionally on $ \{K\geq 1\}$. Elementary computations yields
$g_0(u)=g_0(0)+ h(u)/\psi_\theta(\eta)$. 
We deduce that $g_1(u)=h(u)/h(1)$. This readily implies that
$N_\emptyset$   under  $\cn_{\theta}^{\psi}\left[\, \cdot\, |\,
    M_\lambda\geq  1 \right]$  is  distributed as  $K$ conditionally  on
  $\{K\geq 1\}$.
\end{proof}

\begin{proof}[Proof of Theorem \ref{theo:growth}]
  The   proof   is   very   similar   to  the   proof   of   Proposition
  \ref{prop:marginal}.   From  the   special  Markov   property  Theorem
  \ref{theo:markov-special}, we get:
\[
\ct_q=\ct_{\theta}\circledast_{j\in J^{\theta,q}}(\ct^j_q, x_j),
\]
where $\sum_{j\in J^{\theta,q}}{\dz}_{(x_j, \ct^j_q)}$ is, conditionally
on $\ct_\theta$, a Poisson  point measure on $\ct_{\theta}\times\T$ with
intensity $\bm^{\ct_{\theta}}(dx)\cn_{\theta-q}^{\psi_q}[d\ct]$.  Notice
that  $\ct^j_q$  gives  a  contribution to  $\tau_q(\lambda)$  (that  is
$\ct^j_q\cap \tau_q(\lambda)\neq  \emptyset$) if  there is at  least one
marked leaf on  $\ct^j_q$. Furthermore, if there is  a contribution, then
$\ct^j_q\cap  \tau_q(\lambda)$ is distributed  as $\tau_0(\psi_q(\eta))$
under  $\cn_{\theta-q}^{\psi_q} [\,  \cdot\, |  M_{\psi_q(\eta)}\geq 1]$
but for  the root which  is $x_j$. This  distribution is given  in Lemma
\ref{lem:cn-tl}.  Thanks to \reff{eq:bN-moment}, we have:
\begin{equation}
   \label{eq:cn-M>=1}
\cn_{\theta-q}^{\psi_q} [ M_{\psi_q(\eta)}\geq 1]=
\cn_{\theta-q}^{\psi_q} [1- \expp{-\psi_q(\eta) \sigma}]=\psi(\theta+\eta)-
\psi(\theta) -\psi_q(\eta)=\psi_\theta(\eta) -\psi_q(\eta) .
\end{equation}

Standard results on marked Poisson point process imply that the point
measure on the leaves of $\tau_q(\lambda)$ which are still in
$\tau_\theta(\lambda)$, that is $\sum_{x\in
  \Lf(\tau_\theta(\lambda))\cap \Lf(\tau_q(\lambda))} \delta_x(dy)$,  is,
conditionally on $\ct_\theta$, a Poisson point process on
$\ct_\theta$ with intensity $\psi_q(\eta) \bm^{\ct_\theta}(dy)$, and
is also independent of $\sum_{j\in J^{\theta,q}}{\dz}_{(x_j,
  \ct^j_q)}$.

Using standard results on marked Poisson point measure, we get that
$\tau_q(\lambda)$ can be recovered from $\tau_\theta(\lambda)$ by grafting
independently on
each leaf $x\in \Lf(\tau_\theta(\lambda))$:
\begin{itemize}
   \item Nothing with probability $\psi_q(\eta)/\psi_\theta(\eta)$.
\item A sub-tree distributed as $\cg_{q,\theta}(\psi,\lambda)$ conditionally on
$\cg_{q,\theta}(\psi,\lambda)\neq \emptyset$ with probability
$1-\psi_q(\eta)/\psi_\theta(\eta)$.
 \end{itemize}
Then use that 
$\rP(\cg_{q,\theta}(\psi,\lambda)=\emptyset)=\rP(K=0)=\psi_q(\eta)
/\psi_\theta(\eta)$ and that the mass measure  of $\tau_q(\lambda)$ is
given by \reff{eq:mass-tql} (with $\theta$
replaced by $q$) to end the proof.
\end{proof}

\begin{rem}
   \label{rem:generateur}
We deduce from Theorem  \ref{theo:growth} that the transition rate (for
the backward process) at
time $\theta$ from $\tau_\theta(\lambda)$ to $\tau_\theta(\lambda) \circledast_{1\leq k\leq
  k_0}(\tau^k,x) $, with $x$ a leaf of $\tau_\theta(\lambda)$, is given by:
\[
\frac{(-1)^{k_0+1}\eta^{k_0}}{k_0!}\, 
\frac{\psi^{(k_0+1)}(\theta+\eta)}{\psi_\theta(\eta)} \mu_\theta(d\tau^1)
\cdots \mu_\theta(d\tau^{k_0}),
\]
with $\mu_\theta$ the distribution of $\tau_\theta(\lambda)$ under
$\P^{\psi,\lambda} $. The mass measure process is always defined by
\reff{eq:mass-tql}. 
\end{rem}

\section{Study of Leaves}
\label{sec:leaves}
We first present a martingale based on the total mass of the pruned
process.

\begin{prop}
\label{prop:mart-s}
Let $\psi$ be a branching mechanism satisfying (H1-3). Then under
$\P_r^{\psi}$ and $\N^\psi$, the process $(R_\theta, \theta>q_0)$, with:
\[
R_\theta=\psi'(\theta)\sigma_\theta,
\]
is a backward martingale  with respect to the filtration
$(\cf_{\theta}, \theta>q_0)$  where
$\cf_\theta=\sigma(\ct_q, q\geq \theta)$.
\end{prop}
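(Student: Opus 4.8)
The plan is to establish the backward martingale identity directly from the special Markov property, rather than passing through the leaf counts. Fix $q_0 < q \le \theta$. Since $\cf_\theta \subset \cf_q$, and $R_\theta = \psi'(\theta)\sigma_\theta$ is $\cf_\theta$-measurable while $R_q=\psi'(q)\sigma_q$ is $\cf_q$-measurable, it suffices to prove
\[
\E[R_q \mid \cf_\theta] = R_\theta,
\quad\text{i.e.}\quad
\psi'(q)\,\E[\sigma_q \mid \cf_\theta] = \psi'(\theta)\,\sigma_\theta .
\]
Because $(\ct_\theta,\theta\ge 0)$ is Markov, the tree $\ct_q$ (an earlier pruning time, hence $\ct_\theta\subset\ct_q$) is conditionally independent of the more pruned trees $(\ct_{q'},q'>\theta)$ given $\ct_\theta$; consequently $\E[\sigma_q \mid \cf_\theta] = \E[\sigma_q \mid \ct_\theta]$, and the computation reduces to a single backward transition. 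Under $\P_r^\psi$ this is a genuine conditional expectation, while under the $\sigma$-finite measure $\N^\psi$ the same identity is read off the disintegration furnished by Theorem \ref{theo:markov-special}.

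Next I would insert the explicit backward transition. By Theorem \ref{theo:markov-special}, conditionally on $\ct_\theta$ one has $\ct_q = \ct_\theta \circledast_{i}(\ct^i_q,x_i)$, where $\sum_i \delta_{(x_i,\ct^i_q)}$ is a Poisson point measure on $\ct_\theta\times\T$ with intensity $\bm^{\ct_\theta}(dx)\,\cn^{\psi_q}_{\theta-q}[d\ct]$. Since grafting adds masses, $\sigma_q = \sigma_\theta + \sum_i \sigma^{\ct^i_q}$, so by Campbell's formula the Poissonian sum has conditional mean $\sigma_\theta\,\cn^{\psi_q}_{\theta-q}[\sigma]$ (using $\int_{\ct_\theta}\bm^{\ct_\theta}(dx)=\sigma_\theta$), whence
\[
\E[\sigma_q \mid \ct_\theta] = \sigma_\theta\big(1 + \cn^{\psi_q}_{\theta-q}[\sigma]\big).
\]
Everything then hinges on the first moment $\cn^{\psi_q}_{\theta-q}[\sigma]$, which I extract from \reff{eq:bN-moment}.

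To get this moment, apply \reff{eq:bN-moment} with $\psi$ replaced by $\psi_q$ and subscript $\theta-q$: for small $s>0$, after using $\psi_q(\cdot)=\psi(\cdot+q)-\psi(q)$,
\[
\cn^{\psi_q}_{\theta-q}\!\left[1 - \expp{-\psi_q(s)\sigma}\right]
= \psi(\theta+s) - \psi(\theta) - \psi(q+s) + \psi(q).
\]
Dividing by $\psi_q(s)=\psi(q+s)-\psi(q)$ and letting $s\downarrow 0$ (monotone convergence turns the left side into $\cn^{\psi_q}_{\theta-q}[\sigma]$, the right side being a ratio of difference quotients) yields
\[
\cn^{\psi_q}_{\theta-q}[\sigma] = \frac{\psi'(\theta)-\psi'(q)}{\psi'(q)},
\qquad\text{so}\qquad
1 + \cn^{\psi_q}_{\theta-q}[\sigma] = \frac{\psi'(\theta)}{\psi'(q)}.
\]
Feeding this back gives $\E[\sigma_q \mid \ct_\theta] = \sigma_\theta\,\psi'(\theta)/\psi'(q)$, and therefore $\E[R_q \mid \cf_\theta] = \psi'(q)\,\sigma_\theta\,\psi'(\theta)/\psi'(q) = \psi'(\theta)\,\sigma_\theta = R_\theta$, which is the claim.

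The main difficulties are integrability and finiteness, not the algebra. The restriction $\theta>q_0$ guarantees $\psi(\theta)>0$ (since $q_0=\psi^{-1}(0)$ and $\psi$ is increasing beyond $\theta^*<q_0$), which keeps the relevant exponential moments bounded and, via \reff{eq:N1-es} applied to $\psi_\theta$, gives $\N^\psi[\sigma_\theta]=1/\psi'(\theta)<+\infty$; hence $R_\theta$ is integrable with constant mean $1$, consistent with the martingale property. It also forces $\cn^{\psi_q}_{\theta-q}[\sigma]<+\infty$, legitimizing Campbell's formula and the limit $s\downarrow 0$. The one point requiring genuine care is the meaning of the conditional expectation under the infinite measure $\N^\psi$, which I would handle exactly as in the statement of Theorem \ref{theo:markov-special}, namely through the explicit Poissonian disintegration of $\ct_q$ over $\ct_\theta$, treating the probabilistic case under $\P_r^\psi$ via the representation recalled in Section \ref{sec:meas-LT}. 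I note that the same conclusion can alternatively be obtained, as the introduction suggests, by letting $\lambda\to+\infty$ in the leaf-counting martingale and invoking the total-mass convergence contained in Theorem \ref{theo:cv-dGHPc}.
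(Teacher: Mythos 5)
Your proof is correct and follows essentially the same route as the paper: condition on $\ct_\theta$ via the special Markov property (Theorem \ref{theo:markov-special}), write $\sigma_q=\sigma_\theta+\sum_i\sigma^{\ct^i_q}$, and use Campbell's formula together with $\cn^{\psi_q}_{\theta-q}[\sigma]=(\psi'(\theta)-\psi'(q))/\psi'(q)$ obtained from \reff{eq:bN-moment}. The only difference is that you spell out the derivation of this first moment (by dividing the Laplace-transform identity by $\psi_q(s)$ and letting $s\downarrow 0$) and the integrability points, which the paper leaves implicit.
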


\begin{proof}
   Let $q_0<q\leq \theta$. According to the special Markov property, we
   have:
\[
(\ct_{\theta}, \sigma_q)\overset{(d)}{=}(\ct_{\theta},
\sigma_{\theta}+\sum_{i\in I}\sigma^{\ct^i}),
\]
where $\sum_{i\in I}\delta_{  \ct^{i}}$ is conditionally on $\ct_\theta$
 a Poisson point  measure on $\T$ with intensity:
\[
\bm^{\ct_\theta}(dx) \cn^{\psi_q}_{\theta-q}[d\ct].
\]
Using \reff{eq:cn-q}
and \reff{eq:bN-moment}, we have:
\[
   \E_r^{\psi}[\sigma_q|\F_{\theta}]
=\E_r^{\psi}\left[\sigma_{\theta}+\sum_{i\in
I}\sigma^{\ct^i} \bigg{|}\F_{\theta}\right]
=\sigma_{\theta}+\sigma_{\theta} \cn^{\psi_q}_{\theta-q}
\left[\sigma\right]
=\frac{\psi'(\theta)}{\psi'(q)}\sigma_{\theta}.
\]
This gives the result under $\P^\psi_r$. The proof is similar under
$\N^\psi$.
\end{proof}

Notice that  Proposition \ref{prop:mart-s} is also  a direct consequence
of the infinitesimal transitions of time-reversed process $(\ct_\theta, \theta\in
\Theta^\psi)$ given in \cite{adh:etiltvp}.

Now we present a result on the number of leaves for the sub-tree
process. Let $\lambda\geq 0$.
 We consider
the leaves
process of the sub-trees $L(\lambda)=\{L_\theta(\lambda), \theta\in
\Theta^{\psi,\lambda}\}$: 
\[
L_\theta(\lambda)=L(\tau_\theta(\lambda))=\Card (\Lf(\tau_\theta(\lambda))).
\]

\begin{prop}
   \label{prop:mart-l}
   Let  $\psi$   be  a  branching  mechanism   satisfying  (H1-3).   Let
   $\lambda\geq      0$     and     $\eta=\psi^{-1}(\lambda)>0$.      Under
   $\P^{\psi,\lambda}$,    the    process   $(R_\theta(\lambda),
   \theta>q_0)$ with:
\[
R_\theta(\lambda)= \frac{\psi'(\theta)} {\psi_\theta(\eta)} L_\theta(\lambda),
\]
is  a  backward  martingale  with
   respect  to   the  filtration  $(\ch_{\theta},   \theta>q_0)$,  where
   $\ch_\theta =\sigma (\tau_q(\lambda), q\geq \theta)$.
\end{prop}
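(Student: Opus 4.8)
The plan is to exploit the explicit backward (growth) dynamics of Theorem \ref{theo:growth} so that the whole statement reduces to one conditional-mean computation. First I would record that for $\theta>q_0$ one has $\psi_\theta'(0)=\psi'(\theta)>0$, so $\psi_\theta$ is sub-critical; hence by Corollary \ref{cor:tau-law} the tree $\tau_\theta(\lambda)$ is a.s. finite and $L_\theta(\lambda)<\infty$, integrability following from the mean computed below. Since $(\tau_\theta(\lambda),\theta\geq 0)$ is Markov, for $q_0<q\leq\theta$ the conditional law of $\tau_q(\lambda)$ given $\ch_\theta=\sigma(\tau_{q'}(\lambda),q'\geq\theta)$ depends only on $\tau_\theta(\lambda)$, so it suffices to show
\[
\E^{\psi,\lambda}\!\left[L_q(\lambda)\,\middle|\,\tau_\theta(\lambda)\right]
=\frac{\psi'(\theta)\,\psi_q(\eta)}{\psi'(q)\,\psi_\theta(\eta)}\,L_\theta(\lambda),
\]
since multiplying by $\psi'(q)/\psi_q(\eta)$ then yields $\E^{\psi,\lambda}[R_q(\lambda)\mid\ch_\theta]=R_\theta(\lambda)$.

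The first ingredient I would establish is the \emph{unconditional} mean $m_q:=\E^{\psi,\lambda}[L_q(\lambda)]$. By Corollary \ref{cor:tau-law}, $\tau_q(\lambda)$ is a $(g_{(\psi_q,\psi_q(\eta))},\psi_q'(\eta))$-Galton--Watson tree, and for a sub-critical offspring generating function $g$ the expected number of leaves obeys the one-step recursion $m_q=g(0)+g'(1)\,m_q$, whence $m_q=g(0)/(1-g'(1))$. Reading off \reff{eq:def-g} one gets $g_{(\psi_q,\psi_q(\eta))}(0)=\psi_q(\eta)/(\eta\,\psi'(q+\eta))$ and $1-g_{(\psi_q,\psi_q(\eta))}'(1)=\psi'(q)/\psi'(q+\eta)$, so that
\[
m_q=\frac{\psi_q(\eta)}{\eta\,\psi'(q)}<\infty .
\]
As a check this gives $\E^{\psi,\lambda}[R_q(\lambda)]=1/\eta$, constant in $q$, as a backward martingale requires.

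Finally I would compute the conditional mean through the grafting description of Theorem \ref{theo:growth}. Conditionally on $\tau_\theta(\lambda)$, each of its $L_\theta(\lambda)$ leaves $x$ receives an independent tree $\cg_q^x$ distributed as $\cg_{q,\theta}(\psi,\lambda)=\emptyset\circledast_{1\leq k\leq K}(\tau^k,\emptyset)$, where $K$ has generating function $g_{(\psi,\lambda)}^{q,\theta}$ of \reff{eq:g-psil-qq} and the $\tau^k$ are independent copies of $\tau_q(\lambda)$. The number of leaves created at $x$ equals $1$ when $K=0$ (the leaf survives) and $\sum_{k=1}^{K}L(\tau^k)$ when $K\geq 1$, so by Wald its mean is
\[
g_{(\psi,\lambda)}^{q,\theta}(0)+\big(g_{(\psi,\lambda)}^{q,\theta}\big)'(1)\,m_q
=\frac{\psi_q(\eta)}{\psi_\theta(\eta)}
+\frac{\eta\,(\psi'(\theta)-\psi'(q))}{\psi_\theta(\eta)}\cdot\frac{\psi_q(\eta)}{\eta\,\psi'(q)}
=\frac{\psi'(\theta)\,\psi_q(\eta)}{\psi'(q)\,\psi_\theta(\eta)},
\]
using $g_{(\psi,\lambda)}^{q,\theta}(0)=\psi_q(\eta)/\psi_\theta(\eta)$ and $(g_{(\psi,\lambda)}^{q,\theta})'(1)=\eta(\psi'(\theta)-\psi'(q))/\psi_\theta(\eta)$. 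Summing over the $L_\theta(\lambda)$ leaves gives the displayed conditional mean, hence the martingale identity. The main obstacle is the bookkeeping in this last step: one must keep the grafted object $\cg_{q,\theta}(\psi,\lambda)$, whose root offspring law is $g_{(\psi,\lambda)}^{q,\theta}$, distinct from the subtrees $\tau^k\sim\tau_q(\lambda)$ hanging on it, so that the two generating-function computations — $m_q$ from the Galton--Watson law of $\tau_q(\lambda)$ and $\E[K]$ from $g_{(\psi,\lambda)}^{q,\theta}$ — combine correctly; the final simplification then rests on the elementary identity $\psi'(q)+(\psi'(\theta)-\psi'(q))=\psi'(\theta)$.
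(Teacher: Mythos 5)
Your proposal is correct and follows essentially the same route as the paper's own proof: the unconditional mean $\E^{\psi,\lambda}[L_q(\lambda)]=\psi_q(\eta)/(\eta\,\psi'(q))$ is obtained from the one-step Galton--Watson recursion of Corollary \ref{cor:tau-law}, and the conditional mean given $\tau_\theta(\lambda)$ is computed leaf by leaf from the grafting description of Theorem \ref{theo:growth} using $g_{(\psi,\lambda)}^{q,\theta}(0)$ and $\bigl(g_{(\psi,\lambda)}^{q,\theta}\bigr)'(1)$, exactly as in the paper. Your added remarks on integrability and the reduction from $\ch_\theta$ to $\tau_\theta(\lambda)$ via the Markov property only make explicit what the paper leaves implicit.
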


\begin{rem}
  \label{rem:martL} Notice  that $ L_\theta(\lambda)/ \psi_\theta(\eta)$
  is  the total  mass of  $\bm^{\tau_\theta(\lambda)}$.  The convergence
  from  Remark  \ref{rem:cv-proc}  and  the fact  that  $\ct_\theta$  is
  compact for $\theta>q_0$,  implies that (as a process)  the total mass
  of  $\bm^{\tau_\theta(\lambda)}$  converges   to  the  total  mass  of
  $\bm^{\ct_\theta}$  that  is  $\sigma_\theta$  as  $\lambda$  goes  to
  infinity. Thus Proposition  \ref{prop:mart-s} appears as a consequence
  of Proposition \ref{prop:mart-l}.
\end{rem}

Recall \reff{eq:def-g} and  \reff{eq:g-psil-qq}.
For $\theta\geq q$ and $q\in \Theta^{\psi,\lambda}$, we set:
\begin{equation}
   \label{eq:def-g-q}
g_q(r)=g_{(\psi_q, \psi_q(\eta))}(r)
\quad\text{and}\quad
g(r)=g_{(\psi,\lambda)}^{q,\theta}(r).
\end{equation}

\begin{proof}[Proof of  Proposition   \ref{prop:mart-l}]
We write $L_\theta$ for $L_\theta(\lambda)$. Let $q_0<q\leq \theta$.    By Theorem \ref{theo:growth}, we have:
\begin{equation}
   \label{eq:E-Lq-theta}
\E^{\psi,\lambda}\left[L_q|  \tau_\theta(\lambda)\right]
= L_\theta g(0) + L_\theta
g'(1) \E^{\psi,\lambda} \left[L_q \right].
\end{equation}
Thanks to Corollary \ref{cor:tau-law} and the branching property, we
have:
\begin{equation}
   \label{eq:N-Lq}
\E^{\psi,\lambda}\left[L_q\right]= g_q(0) + \E^{\psi,\lambda}
\left[L_q\right] g'_q(1). 
\end{equation}
This gives:
\[
\P^{\psi,\lambda}\left[L_q \right]= \frac{g_q(0)}{1- g'_q(1)}= \frac{\psi_q(\eta)}{\eta
  \psi'(q)} \cdot
\]
 Then use that:
\[
g(0)=\frac{\psi_q(\eta)}{\psi_\theta(\eta)},
\quad
g'(1) = \frac{\eta}{\psi_\theta(\eta)} \left( \psi'(\theta) -\psi'(q)
\right),
\]
and \reff{eq:E-Lq-theta} to get that:
\[
\E^{\psi,\lambda}\left[L_q| \tau_\theta(\lambda)\right]
= \frac{\psi_q(\eta)}{\psi'(q)} \frac{\psi'(\theta)}{\psi_\theta(\eta)} L_\theta.
\]
This gives the result. 
\end{proof}

\begin{remark}
A similar result  for the leaves process of  discrete time Galton-Watson
tree-valued process was proved in Corollary 3.4 of \cite{adh:pgwttvmp}
using a quantity similar to $(
1- g'_q(1))/g_q(0)$ which comes from  \reff{eq:N-Lq}.
\end{remark}

For $\theta>\theta_\lambda$, the function $g_\theta$ is convex
positive with $g_\theta(0)>0$ and $g_\theta(1)=1$. Hence, for $\zeta\in [0,1)$, the equation:
\[
x=g_\theta(x) + g_\theta(0) (\zeta-1)
\]
has   a   unique   solution   $x\in   [0,1]$,   which   we   denote   by
$h_\theta(\zeta)$.     By    construction    the     backward    process
$(L_\theta(\lambda),    \theta>\theta_\lambda)$    is    Markov    under
$\P^{\psi,\lambda}$.   The   next   Proposition   gives  its   one   and
two-dimensional marginals.
\begin{prop}
   \label{prop:Lq}
Let $\psi$ be a branching mechanism satisfying (H1-3). 
For $\theta\geq q> \theta_\lambda$ and $\zeta, z\in [0,1)$, we have:
\begin{equation}
   \label{eq:NzL}
\E^{\psi,\lambda}\left[\zeta^{L_\theta(\lambda)}\right]= h_\theta(\zeta),
\quad\text{and}\quad
\E^{\psi,\lambda}\left[\zeta^{L_\theta(\lambda)}z^{L_q(\lambda)}\right]=
h_\theta(\zeta w^{q,\theta}(z)),
\end{equation}
with $w^{q,\theta}(z)=  g(h_q(z))+g(0) (z- 1)$.
\end{prop}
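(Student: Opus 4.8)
The plan is to read both identities off the Galton--Watson description of $\tau_\theta(\lambda)$ together with the backward grafting dynamics, reducing everything to the fixed-point equation defining $h_\theta$. First I would establish the one-dimensional marginal. By Corollary~\ref{cor:tau-law}, under $\P^{\psi,\lambda}$ the tree $\tau_\theta(\lambda)$ is a Galton--Watson real tree whose offspring law has generating function $g_\theta$ (see \reff{eq:def-g-q}), and its leaf count $L_\theta(\lambda)$ depends only on the branching structure, not on the branch lengths. Decomposing at the root, with probability $g_\theta(0)$ the root is a leaf and $L_\theta(\lambda)=1$, while conditionally on the root having $K\geq 1$ children the leaf count is a sum of $K$ independent copies of $L_\theta(\lambda)$. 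Setting $\phi(\zeta)=\E^{\psi,\lambda}[\zeta^{L_\theta(\lambda)}]$ (with the convention $\zeta^{+\infty}=0$, needed when $\theta_\lambda<\theta<\theta^*$, where $\tau_\theta(\lambda)$ may be unbounded), this branching decomposition yields
\[
\phi(\zeta)=g_\theta(0)\,\zeta+\bigl(g_\theta(\phi(\zeta))-g_\theta(0)\bigr)=g_\theta(\phi(\zeta))+g_\theta(0)(\zeta-1),
\]
which is precisely the equation defining $h_\theta(\zeta)$. Since $\phi(\zeta)\in[0,1]$ and this equation has a unique solution in $[0,1]$ (recalled just before the statement), we obtain $\phi(\zeta)=h_\theta(\zeta)$, the first identity in \reff{eq:NzL}.

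For the two-dimensional marginal I would condition on $\tau_\theta(\lambda)$ and invoke Theorem~\ref{theo:growth}: conditionally on $\tau_\theta(\lambda)$, the tree $\tau_q(\lambda)$ is obtained by grafting on each of the $L_\theta(\lambda)$ leaves of $\tau_\theta(\lambda)$ an independent copy of $\cg_{q,\theta}(\psi,\lambda)$, whose offspring variable $K$ has generating function $g=g^{q,\theta}_{(\psi,\lambda)}$. A leaf $x$ remains a leaf of $\tau_q(\lambda)$, contributing exactly one leaf, precisely when $K=0$; when $K\geq 1$ it contributes the leaves of $K$ grafted subtrees, each distributed as $\tau_q(\lambda)$ under $\P^{\psi,\lambda}$ and hence with leaf-generating function $h_q$ by the first part. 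The per-leaf contribution therefore has generating function
\[
g(0)\,z+\bigl(g(h_q(z))-g(0)\bigr)=g(h_q(z))+g(0)(z-1)=w^{q,\theta}(z),
\]
and, the contributions of distinct leaves being independent, $\E^{\psi,\lambda}[z^{L_q(\lambda)}\mid \tau_\theta(\lambda)]=w^{q,\theta}(z)^{L_\theta(\lambda)}$. Multiplying by $\zeta^{L_\theta(\lambda)}$ and taking expectations gives $\E^{\psi,\lambda}[\zeta^{L_\theta(\lambda)}z^{L_q(\lambda)}]=\E^{\psi,\lambda}[(\zeta\,w^{q,\theta}(z))^{L_\theta(\lambda)}]=h_\theta(\zeta\,w^{q,\theta}(z))$, the last step being the one-dimensional identity applied at level $\theta$ with argument $\zeta\,w^{q,\theta}(z)$.

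The delicate points are bookkeeping and domain checks rather than any genuine estimate. One must verify that $\zeta\,w^{q,\theta}(z)$ stays in $[0,1)$ so that $h_\theta$ may be applied: since each grafted contribution consists of at least one leaf, the per-leaf contribution is $\geq 1$ almost surely, whence $w^{q,\theta}(z)\leq z<1$ and $\zeta\,w^{q,\theta}(z)<1$. One must also treat the supercritical sub-regime $\theta_\lambda<\theta<\theta^*$, where $L_\theta(\lambda)=+\infty$ with positive probability; this is absorbed by the convention $\zeta^{+\infty}=0$, which keeps the branching recursion for $\phi$ valid and makes $h_\theta(\zeta)$ the relevant (smallest) root, encoding the extinction probability. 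I expect this regularity check to be the only mildly subtle part, the remainder being two successive applications of the fixed-point characterisation of $h$.
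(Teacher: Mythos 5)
Your proof is correct and follows essentially the same route as the paper: the first identity comes from decomposing $\tau_\theta(\lambda)$ at its lowest branching point and using the branching property to land on the fixed-point equation defining $h_\theta$, and the second from Theorem~\ref{theo:growth}, computing the per-leaf generating function $g(0)z+g(h_q(z))-g(0)=w^{q,\theta}(z)$ and then applying the one-dimensional identity at argument $\zeta\,w^{q,\theta}(z)$. The extra domain checks you supply (that $\zeta w^{q,\theta}(z)\in[0,1)$ and the $\zeta^{+\infty}=0$ convention) are sound and are left implicit in the paper.
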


\begin{proof} We write $L_\theta$ for $L_\theta(\lambda)$. 
  Conditioning on the  number of children of the  lowest branching point
  and using the branching property of the Galton-Watson trees
  $\tau_\theta(\lambda)$, we get: 
\[
   \E^{\psi,\lambda} \left[\zeta^{L_\theta}\right]
=g_{\theta}(0)\zeta+\sum_{k=1}^{\infty}
    \E^{\psi,\lambda}\left[\zeta^{L_\theta}\right]^k
\frac{g^{(k)}_{\theta}(0)}{k!}
=g_\theta\left(\E^{\psi,\lambda}\left[\zeta^{L_\theta}\right]\right)
+g_\theta(0) (\zeta-1). 
\]
This gives the first part of \reff{eq:NzL}.
Recall $\cg_{q,\theta}(\psi,\lambda)$ defined in Section
\ref{sec:subgrowth}.
Using again the branching
property, we have:
\[
 \E\left[z^{L(\cg_{q,\theta}(\psi,\lambda))}\right]
= g(0)z+ g(h_q(z)) - g(0).
\]
 Then,  by Theorem \ref{theo:growth}, we have:
\begin{align*}
\E^{\psi,\lambda}\left[\zeta^{L_\theta}z^{L_q}\right]
&=   \E^{\psi,\lambda}\left[\zeta^{L_\theta}z^{\sum_{x\in
      \Lf(\tau_\theta(\lambda))} 
    L(\cg^x_{q,\theta} (\psi,\lambda))} \right]\\
&=   \E^{\psi,\lambda}\left[\zeta^{L_\theta}( g(h_q(z)) + g(0)(z-1))^{L_\theta }
  \right]\\
&= h_\theta\Big(\zeta ( g(h_q(z)) + g(0)(z-1))\Big).
\end{align*}
This ends the proof.
\end{proof}

\begin{exple}
\label{ex:quad}
Assume     $\psi(u)=\beta u^2$,    with    $\beta>0$, so that
$\Theta=\R$ and $q_0=0$.  Let $\lambda>0$. We have
$\eta=\sqrt{\lambda/\beta}$, $\theta_\lambda=-\eta/2$ and
$\Theta^{\psi,\lambda}=[\theta_\lambda, +\infty )$. For
$\theta>\theta_\lambda$ and $\zeta \in [0,1)$,  we have:
\[
\E^{\psi,\lambda} \left[\zeta^{L_\theta(\lambda)}\right]=
\frac{\eta+\theta-\sqrt{\theta^2 \zeta+(1-\zeta)(\theta+\eta)^2}}{\eta}
\]
and     for $\theta\neq 0$ (see \ref{eq:Lq-m1} for $\theta_\lambda<\theta<0$):
\[
\E^{\psi,\lambda}\left[{L_\theta(\lambda)}\right]=
\frac{\eta+2\theta}{2|\theta|} \cdot
\]
For $\theta=\theta_\lambda$, we have $g_{\theta_\lambda}(0)=0$, and the
tree $\tau_{\theta_\lambda}(\lambda)$ is a Yule tree and has no leaf
(formally, we have
$\E^{\psi,\lambda}\left[\zeta^{L_{\theta_\lambda}(\lambda)}\right]=0$). 
\end{exple}

\section{Ascension time and tree at the ascension time}
\label{sec:ascension}
For convenience, we assume in this Section that $\psi$ is a critical
branching mechanism satisfying (H1-3). 

\subsection{Ascension process and Ascension time}
Let $\lambda>0$. Recall $\theta_{\lz}$ and $\Theta^{\psi,\lz}$ defined
in Section 6.2.  Define the ascension time on $\{M_\lambda\geq 1\}$:
\begin{equation}
   \label{eq:def-Al}
A_{\lz}=\inf\{\theta\in\Theta^{\psi,\lz}; \tau_{\theta}(\lz) \text{ is
  a compact tree.}\},
\end{equation}
where   we  make   the  convention   that  $\inf\emptyset=\theta_{\lz}$.
$\P^{\psi,\lambda}$-a.s.,    we   have    $A_{\lz}\leq0$.    Since,   by
construction,  $\tau_{\theta}(\lz)$ is  a compact  tree if  and  only if
$\ct_{\theta}$     is     a      compact     tree,     we     have     $
A_{\lz}=\inf\{\theta\in\Theta^{\psi,\lz}: \sigma_{\theta}<\infty\}$.

For  $\theta\in \Theta$, we  set $\bar{\theta}=\psi^{-1}(\psi(\theta))$,
so that  $\bar{\theta}$ is  the unique positive  number such
that:
\begin{equation}
   \label{eq:barthe}
\psi(\bar{\theta})=\psi(\theta).
\end{equation}
By  Theorem  6.5 of  \cite{ad:ctvmp}  and its  proof,  we  have for  all
$\theta\in \Theta^\psi$:
\begin{equation}
   \label{eq:dif-theta}
\bar{\theta}-\theta=\psi_{\theta}^{-1}(0).
\end{equation}
Recall   $g_\theta$    defined   in   \reff{eq:def-g-q}.    And   notice
$1-\frac{\bar{\theta}-\theta}{\eta}$  is  the  minimal solution  of  the
equation $r=g_\theta(r)$. Since
$\tau_{\theta}(\lz)$  is under $\P^{\psi,\lambda}$ a Galton-Watson tree
such the reproduction law has generating function $g_\theta$, we deduce that for   $\theta\in(\theta_{\lz},0)$:
\begin{equation}
   \label{eq:disA1}
\P^{\psi,\lambda}(A_{\lz}<\theta)=\P^{\psi,\lambda}(\tau_{\theta}(\lz)\text{
  is      compact})=1-\frac{\bar{\theta}-\theta}{\eta}\cdot
\end{equation}
Since $d\bar \theta/d\theta=\psi'(\theta)/\psi'(\bar \theta)$, we have
for   $\theta_{\lz}<\theta<0$:
\begin{equation}
   \label{eq:disA2}
\P^{\psi,\lambda}(A_{\lz}\in
d\theta)=\frac{1}{\eta}
\left(1-\frac{\psi'(\theta)}{\psi'(\bar{\theta})}\right)d\theta. 
\end{equation}
We give the distribution of the sub-tree at the ascension time. We set
$\cs_\theta(\lambda)=(\tau_{\theta+q} (\lambda), q\geq 0)$. 
Thanks to Corollary \ref{cor:tau-law} and Theorem \ref{theo:pruning}, for
$\theta\in \Theta^{\psi,\lambda}$, 
 $\cs_\theta(\lambda)$ under $\P^{\psi,\lambda}$
is 
distributed as $\cs_0(\psi_\theta(\eta))$ under $\P^{\psi_\theta,\psi_\theta(\eta)}$.

\begin{prop}
\label{prop:S-SL}
  Let      $\lambda>0$      and      $\eta=\psi^{-1}(\lambda)$.      For
  $\theta_{\lz}<\theta<0$ and any non-negative measurable function $F$, we have:
\[
\E^{\psi,\lz}[F(\cs_{A_{\lz}}(\lz))|A_{\lz}=\theta]
=\frac{\eta\psi'(\bar{\theta})}{\psi_{\theta}(\eta)}
\E^{\psi,\lz}\left[F(\cs_{\theta}(\lz))\, L_{\theta}(\lambda)\, 
\ind_{\{L_{\theta}(\lambda)<\infty\}}\right].
\]
\end{prop}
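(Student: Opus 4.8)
The plan is to peel off the trajectory and reduce everything to the one-dimensional law of the tree at the ascension time, which I then identify as a size-biasing by the number of leaves. First I would reduce to a marginal statement. On $\{A_{\lz}=\theta\}$ one has $\cs_{A_{\lz}}(\lz)=\cs_\theta(\lz)$ by definition, and on $\{A_{\lz}\le\theta\}$ the variable $A_{\lz}$ is $\sigma(\tau_{\theta'}(\lz);\theta'\le\theta)$-measurable (once the tree is compact it stays compact as $\theta$ increases, so the infimum is attained below $\theta$), whereas $\cs_\theta(\lz)$ is $\sigma(\tau_{\theta'}(\lz);\theta'\ge\theta)$-measurable. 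By the Markov property of the $\theta$-indexed process, conditionally on $\tau_\theta(\lz)$ the event $\{A_{\lz}=\theta\}$ and the future $\cs_\theta(\lz)$ are independent. Writing $\Phi_F(T)=\E^{\psi,\lz}[F(\cs_\theta(\lz))\mid\tau_\theta(\lz)=T]$ and using that $L_\theta(\lambda)$ is a function of $\tau_\theta(\lz)$, the statement follows (after replacing $\Phi$ by $\Phi_F$ and re-expanding the conditioning) once I prove, for every non-negative measurable $\Phi$,
\[
\E^{\psi,\lz}\big[\Phi(\tau_\theta(\lz))\,\ind_{\{A_{\lz}\in d\theta\}}\big]
=\frac{\psi'(\bar\theta)-\psi'(\theta)}{\psi_\theta(\eta)}\,
\E^{\psi,\lz}\big[\Phi(\tau_\theta(\lz))\,L_\theta(\lambda)\,\ind_{\{L_\theta(\lambda)<\infty\}}\big]\,d\theta,
\]
since dividing by the density $\P^{\psi,\lz}(A_{\lz}\in d\theta)=\eta^{-1}(1-\psi'(\theta)/\psi'(\bar\theta))\,d\theta$ from \reff{eq:disA2} produces exactly the constant $\eta\psi'(\bar\theta)/\psi_\theta(\eta)$.

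Next I would compute the left-hand side through the growth (backward) dynamics. Recall $\tau_q(\lz)$ is compact iff $L_q(\lambda)<\infty$ iff $A_{\lz}\le q$, and that the backward process grows, so $\{\theta-\varepsilon<A_{\lz}\le\theta\}=\{\tau_\theta(\lz)\text{ compact}\}\cap\{\tau_{\theta-\varepsilon}(\lz)\text{ non-compact}\}$. Theorem \ref{theo:growth} with $q=\theta-\varepsilon$ gives $\tau_{\theta-\varepsilon}(\lz)$ from $\tau_\theta(\lz)$ by grafting at each $x\in\Lf(\tau_\theta(\lz))$ an independent copy $\cg^x$ of $\cg_{\theta-\varepsilon,\theta}(\psi,\lambda)$; hence on $\{\tau_\theta(\lz)\text{ compact}\}$ the tree $\tau_{\theta-\varepsilon}(\lz)$ is non-compact iff some graft is infinite, so
\[
\E^{\psi,\lz}\big[\Phi(\tau_\theta(\lz))\,\ind_{\{\theta-\varepsilon<A_{\lz}\le\theta\}}\big]
=\E^{\psi,\lz}\Big[\Phi(\tau_\theta(\lz))\,\ind_{\{L_\theta(\lambda)<\infty\}}\big(1-(1-\rho_\varepsilon)^{L_\theta(\lambda)}\big)\Big],
\]
with $\rho_\varepsilon=\P^{\psi,\lz}(\cg_{\theta-\varepsilon,\theta}(\psi,\lambda)\text{ infinite})$. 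As $\tau_\theta(\lz)$ and $\tau_{A_{\lz}}(\lz)$ differ only by the grafts added over $[A_{\lz},\theta]$, an interval of length at most $\varepsilon$, replacing one by the other costs $o(\varepsilon)$, so this quantity is the target integrand integrated over $(\theta-\varepsilon,\theta]$.

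Finally I would identify the rate. Writing $\cg_{\theta-\varepsilon,\theta}(\psi,\lambda)=\emptyset\circledast_{1\le k\le K}(\tau^k,\emptyset)$ with $K$ of generating function $g_{(\psi,\lambda)}^{q,\theta}$ of \reff{eq:g-psil-qq} and the $\tau^k$ independent copies of $\tau_{q}(\lz)$, the graft is compact iff all $\tau^k$ are, so with $p_q=\P^{\psi,\lz}(\tau_q(\lz)\text{ non-compact})=(\bar q-q)/\eta$ (by \reff{eq:disA1}) one has $\rho_\varepsilon=1-g^{q,\theta}_{(\psi,\lambda)}(1-p_q)$, $q=\theta-\varepsilon$. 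Evaluating at $r=1-p_q$, so that $\eta(1-r)=\eta p_q=\bar q-q=\psi_q^{-1}(0)$ by \reff{eq:dif-theta}, gives $\psi_q(\bar q-q)=\psi_q(\psi_q^{-1}(0))=0$, whence \reff{eq:g-psil-qq} collapses to $\rho_\varepsilon=\psi_\theta(\bar q-q)/\psi_\theta(\eta)$. Differentiating in $q$ at $q=\theta$, using $\psi_\theta'(\bar\theta-\theta)=\psi'(\bar\theta)$ and $d\bar q/dq=\psi'(q)/\psi'(\bar q)$, yields $\rho_\varepsilon=\varepsilon\,(\psi'(\bar\theta)-\psi'(\theta))/\psi_\theta(\eta)+o(\varepsilon)$. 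The elementary bound $1-(1-\rho_\varepsilon)^n\le n\rho_\varepsilon$ together with finiteness of $\E^{\psi,\lz}[\Phi\,L_\theta(\lambda)\ind_{\{L_\theta(\lambda)<\infty\}}]$ (which for $\Phi\equiv1$ equals $\psi_\theta(\eta)/(\eta\psi'(\bar\theta))$, also pinning the normalisation) justify dividing by $\varepsilon$ and letting $\varepsilon\downarrow0$, producing the displayed identity.

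The hard part will be the infinitesimal bookkeeping rather than the algebra (which is forced by \reff{eq:g-psil-qq}, \reff{eq:dif-theta} and \reff{eq:disA2}): I must verify that two or more simultaneous infinite grafts, and the grafts accumulated strictly between $A_{\lz}$ and $\theta$, contribute only $o(\varepsilon)$, and that the dominated-convergence passage $1-(1-\rho_\varepsilon)^{L_\theta(\lambda)}\sim L_\theta(\lambda)\,\rho_\varepsilon$ is legitimate uniformly in the possibly large but integrable leaf count $L_\theta(\lambda)$.
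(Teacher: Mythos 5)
Your proof is correct and follows essentially the same route as the paper's: reduce to a functional of $\tau_\theta(\lambda)$, write $\P^{\psi,\lz}(\tau_q(\lz)\text{ non compact}\mid\tau_\theta(\lz))=1-g_{(\psi,\lambda)}^{q,\theta}\bigl(1-(\bar q-q)/\eta\bigr)^{L_\theta(\lambda)}$ on $\{L_\theta(\lambda)<\infty\}$ via Theorem \ref{theo:growth} and \reff{eq:disA1}, differentiate at $q=\theta$, and normalise using \reff{eq:disA2} and \reff{eq:Lq-m1}. One remark: your rate $(\psi'(\bar\theta)-\psi'(\theta))/\psi_\theta(\eta)$ is the self-consistent one (its product with $\E^{\psi,\lz}[L_\theta(\lambda)\ind_{\{L_\theta(\lambda)<\infty\}}]=\psi_\theta(\eta)/(\eta\psi'(\bar\theta))$ recovers exactly the density \reff{eq:disA2} of $A_\lz$), whereas the paper's displayed intermediate value $L_\theta\,\psi'(\bar\theta)/\psi_\theta(\eta)$ drops the $-\psi'(\theta)$ contribution coming from $\partial_q g$; that slip cancels in the paper's final ratio $\E^{\psi,\lz}[F L_\theta\ind]/\E^{\psi,\lz}[L_\theta\ind]$, so both derivations land on the stated formula.
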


\begin{proof}
By considering $\E^{\psi,\lambda}\left[F(\cs_q)|\tau_q(\lambda)\right]$
instead of $F(\cs_q(\lambda))$, one can assume that $F$ is measurable defined on
$\T$. Assume $F(T)=0$ if $T$ is non compact. For $\theta_{\lz}<q<\theta<0$, we have:
\begin{equation}
   \label{eq:Asc-T}
\E^{\psi,\lz}[F(\tau_{\theta}(\lz))\ind_{\{A_{\lz}\geq q\}}]
=\E^{\psi,\lz}[F(\tau_{\theta}(\lz))\P^{\psi,\lz}(\tau_q(\lz) \text{ is
  non compact}|\tau_{\theta}(\lz))].
\end{equation}
We write $L_\theta$ for $L_\theta(\lambda)$. On
$\{\tau_\theta(\lambda)\text{ compact}\}$ that is 
$\{L_{\theta} <\infty\}$, we get that $\tau_q(\lambda)$ is
compact if and only if the trees grafted on $\tau_\theta(\lambda)$ to
get $\tau_q(\lambda)$, see Theorem
\ref{theo:growth}, are compact. Using \reff{eq:disA1},
\reff{eq:g-psil-qq} and notation \reff{eq:def-g-q}, we get on
$\{L_{\theta} <\infty\}$:
\begin{equation}
   \label{eq:Asc-T|}
\P^{\psi,\lz}(\tau_q(\lz) \text{ is  non compact}|\tau_{\theta}(\lz))
=1- g\left(1-\frac{\bar{q}-q}{\eta}\right)^{L_{\theta}}.
\end{equation}
A  simple  calculation  (recall  $g$   depends  on  $q$)  based  on  the
computation         of         \reff{eq:disA2}         yields         on
$\{L_{\theta}<\infty\}$:
\begin{align*}
   \frac{d}{dq}g
 \left(1-\frac{\bar{q}-q}{\eta}\right)^{L_{\theta}}_{|q=\theta}
&= L_\theta\,
g\left(1-\frac{\bar{q}-q}{\eta}\right)^{L_{\theta}-1}_{|q=\theta}
\frac{d}{dq}g \left(1-\frac{\bar{q}-q}{\eta}\right) _{|q=\theta}\\
&= L_\theta\left[
\frac{dg}{dq} \left(1-\frac{\bar{q}-q}{\eta}\right) 
- g' \left(1-\frac{\bar{q}-q}{\eta}\right) \frac{1}{\eta}
\left(1-\frac{\psi'(q)}{\psi'(\bar{q})}\right)
\right]_{|q=\theta} \\
&= L_{\theta}\, 
 \frac{\psi'(\bar{\theta})}{\psi_{\theta}(\eta)}\cdot
\end{align*} 
Then by \reff{eq:Asc-T} and \reff{eq:Asc-T|} and thanks to the
regularity of $g$ and $\bar q$ in $q$, we have:
\begin{align*}
  \frac{\E^{\psi,\lz}[F(\tau_{\theta}(\lz)),A_{\lz}\in d\theta]}{d\theta}
 &=-\frac{d}{dq}\E^{\psi,\lz}[F(\tau_{\theta}(\lz))
\ind _{\{A_{\lz}\geq q\}}]_{|q=\theta}\\
& =\E^{\psi,\lz}\left[F(\tau_{\theta}(\lz))L_{\theta}\, 
 \frac{\psi'(\bar{\theta})}{\psi_{\theta}(\eta)}\ind
 _{\{L_{\theta} <\infty\}}\right].  
\end{align*}
Meanwhile, by Proposition \ref{prop:Lq}, we have:
\begin{equation}
   \label{eq:Lq-m1}
\E^{\psi,\lz}[L_{\theta} \ind _{\{L_{\theta}<\infty\}}]
=\lim_{\zeta\rar1-}\frac{\partial}{\partial \zeta}h_{\theta}(\zeta)
=\frac{g_\theta(\eta)(0)}
 {1-g'_{\theta}(h_{\theta}(1-))}
 =\frac{\psi_{\theta}(\eta)}{\eta\psi'(\bar{\theta})},
\end{equation}
where we use the fact that
$h_{\theta}(1-)=1-\frac{\bar{\theta}-\theta}{\eta}$ which  is  the
minimal solution  of  the equation $r=g_\theta(r)$.
Thus, we get: 
\begin{align*}
\E^{\psi,\lz}[F(\tau_{\theta}(\lz))|A_{\lz}\in d\theta]
&=\frac{\E^{\psi,\lz}[F(\tau_{\theta}(\lz)),A_{\lz}\in
  d\theta]}{\P^{\psi,\lz}(A_{\lz}\in d\theta)}\\
&=\frac{\E^{\psi,\lz}[F(\tau_{\theta}(\lz)) L_\theta
  \ind_{\{L_\theta<+\infty
    \}}]}{\E^{\psi,\lz}[L_{\theta}  \ind_{\{L_\theta<+\infty
    \}}]}\\
&= \frac{\eta\psi'(\bar{\theta})}{\psi_{\theta}(\eta)}\; 
\E^{\psi,\lz}\left[F(\tau_{\theta}(\lz))L_{\theta}\ind_{\{L_{\theta}<\infty\}}
  \right].   
\end{align*}
This ends  the proof. 
\end{proof}

We give an immediate Corollary. 

\begin{cor}
   \label{cor:S-l-S-L}
  Let      $\lambda>0$      and      $\eta=\psi^{-1}(\lambda)$.      For
  $\theta_{\lz}<\theta<0$ and any non-negative measurable function $F$,
  we have, with $\eta_\theta=\eta-\bar \theta +\theta$:
\[
\E^{\psi,\lz}[F(\cs_{A_{\lz}}(\lz))|A_{\lz}=\theta]
=\frac{\eta_ \theta \psi'(\bar{\theta})}{\psi_{\bar
    \theta}(\eta_\theta)}
\P^{\psi,\lambda}\left[F(\cs_{\bar{\theta}}(\psi(\eta_\theta)))
L_{\bar{\theta}}(\psi(\eta_\theta))\right].
\]
\end{cor}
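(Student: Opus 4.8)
The plan is to rewrite the right-hand side of Proposition \ref{prop:S-SL}, which is a weighted average over the \emph{super-critical} pruned sub-tree $\cs_\theta(\lz)$ restricted to its finite realizations, as a weighted average over the \emph{sub-critical} sub-tree $\cs_{\bar\theta}(\psi(\eta_\theta))$; the passage from $\theta<0$ to $\bar\theta>0$ is precisely the duality between a super-critical branching mechanism and its sub-critical conjugate. Throughout $\theta\in(\theta_\lz,0)$, so that $\psi_\theta$ is super-critical (with $\psi_\theta^{-1}(0)=\bar\theta-\theta$ by \reff{eq:dif-theta}) while $\psi_{\bar\theta}$ is sub-critical. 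I would first record two elementary facts. Since $\bar\theta+\eta_\theta=\theta+\eta$ and $\psi(\bar\theta)=\psi(\theta)$,
\[
\psi_{\bar\theta}(\eta_\theta)=\psi(\bar\theta+\eta_\theta)-\psi(\bar\theta)=\psi(\theta+\eta)-\psi(\theta)=\psi_\theta(\eta)=:\mu,
\]
so the two prefactors coincide; and by \reff{eq:disA1},
\[
\P^{\psi,\lz}(L_\theta(\lz)<\infty)=\P^{\psi,\lz}(\tau_\theta(\lz)\text{ compact})=1-\frac{\bar\theta-\theta}{\eta}=\frac{\eta_\theta}{\eta}.
\]

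Next I would reduce to a statement about the excursion measures. By the distributional identity recorded just before Proposition \ref{prop:S-SL} together with Remark \ref{rem:shift}, $\cs_\theta(\lz)$ under $\P^{\psi,\lz}$ and $\cs_{\bar\theta}(\psi(\eta_\theta))$ under $\P^{\psi,\lz}$ are distributed respectively as $\cs_0(\mu)$ under $\P^{\psi_\theta,\mu}$ and as $\cs_0(\mu)$ under $\P^{\psi_{\bar\theta},\mu}$, both sharing the selection rate $\mu$; under these identifications $L_\theta(\lz)$ and $L_{\bar\theta}(\psi(\eta_\theta))$ both correspond to $L_0(\mu):=L(\tau_0(\mu))$. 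The heart of the matter is then the Girsanov identity
\[
\N^{\psi_{\bar\theta}}[G]=\N^{\psi_\theta}\big[G\,\ind_{\{\sigma<\infty\}}\big],
\]
which I would obtain by applying \reff{eq:GN} with the base mechanism taken to be $\psi_\theta$ and the shift parameter $\bar\theta-\theta$: this is legitimate because $\bar\theta-\theta=\psi_\theta^{-1}(0)$ is at least the positive root $-\theta$ of $\psi_\theta'$, one checks $(\psi_\theta)_{\bar\theta-\theta}=\psi_{\bar\theta}$, and the exponential weight is trivial since $\psi_\theta(\bar\theta-\theta)=\psi(\bar\theta)-\psi(\theta)=0$.

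The hard part will be to extend this identity from functionals $G$ of the tree $\ct_0$ to functionals of the \emph{enriched} tree, that is of $\ct_0$ together with the pruning marks of Section \ref{sec:pruning} and the Poissonian leaf selection $\cp^0$ of \reff{eq:def-P0}. The key observation is that, conditionally on $\ct_0$, these are independent Poisson measures whose intensities, namely $2\beta\,\ell^{\ct_0}(dy)+\sum_{x\in\Br_\infty(\ct_0)}\Delta_x\delta_x(dy)$ for the marks and $dt\,\bm^{\ct_0}(dx)$ for the selection, are \emph{intrinsic} functionals of $\ct_0$; since $\psi_\theta$ and $\psi_{\bar\theta}$ share the coefficient $\beta$, and the node masses $\Delta_x$ and the measures $\ell^{\ct_0},\bm^{\ct_0}$ are read off from $\ct_0$ itself, the conditional law of the enriched data given $\ct_0$ is the same under $\N^{\psi_\theta}$ and $\N^{\psi_{\bar\theta}}$. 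Hence the Girsanov identity holds verbatim for $G$ a functional of the enriched tree, and in particular for $G=F(\cs_0(\mu))\,L_0(\mu)\,\ind_{\{M_\mu\geq1\}}$, since $\cs_0(\mu)$ and $L_0(\mu)$ are measurable functions of it.

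Finally I would assemble the pieces. Using $\{\sigma<\infty\}=\{L_0(\mu)<\infty\}$ on $\{M_\mu\geq1\}$, the normalizations $\N^{\psi_\theta}[M_\mu\geq1]=\eta$ and $\N^{\psi_{\bar\theta}}[M_\mu\geq1]=\eta_\theta$, and the enriched identity, I get
\[
\eta\,\E^{\psi_\theta,\mu}\big[F(\cs_0(\mu))\,L_0(\mu)\,\ind_{\{L_0(\mu)<\infty\}}\big]=\eta_\theta\,\E^{\psi_{\bar\theta},\mu}\big[F(\cs_0(\mu))\,L_0(\mu)\big].
\]
Undoing the two distributional shifts turns this into $\eta\,\E^{\psi,\lz}[F(\cs_\theta(\lz))L_\theta(\lz)\ind_{\{L_\theta<\infty\}}]=\eta_\theta\,\E^{\psi,\lz}[F(\cs_{\bar\theta}(\psi(\eta_\theta)))L_{\bar\theta}(\psi(\eta_\theta))]$. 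Substituting the resulting value of $\E^{\psi,\lz}[F(\cs_\theta(\lz))L_\theta(\lz)\ind_{\{L_\theta<\infty\}}]$ into Proposition \ref{prop:S-SL} and replacing $\psi_\theta(\eta)$ by $\psi_{\bar\theta}(\eta_\theta)$ in the prefactor yields exactly the claimed formula. The delicate step is the enriched extension of \reff{eq:GN}; everything else is bookkeeping of normalizing constants.
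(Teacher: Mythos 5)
Your proof follows essentially the same route as the paper's: start from Proposition \ref{prop:S-SL}, unnormalise to $\N^\psi$, transfer to $\N^{\psi_\theta}$ via Proposition \ref{prop:marginal}, apply the Girsanov identity \reff{eq:GN} to pass from $\N^{\psi_\theta}$ to $\N^{\psi_{\bar\theta}}$ (you make explicit its extension to the tree enriched with the pruning marks and the leaf selection, which the paper uses implicitly), and transfer back. The one loose point is the final bookkeeping: undoing the shift on the $\bar\theta$-side lands on the conditioning event $\{M_{\psi(\eta_\theta)}\geq 1\}$, of $\N^\psi$-mass $\eta_\theta$, not on $\{M_\lambda\geq 1\}$, of mass $\eta$, so your displayed identity $\eta\,\E^{\psi,\lambda}[\cdots]=\eta_\theta\,\E^{\psi,\lambda}[\cdots]$ is off by the factor $\eta/\eta_\theta$ if $\E^{\psi,\lambda}$ is read literally on both sides. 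This, however, is exactly the normalisation ambiguity already present in the corollary as stated (taking $F=1$, the stated right-hand side evaluates to $\eta_\theta/\eta$ rather than $1$; the prefactor $\eta_\theta$ goes with conditioning on $\{M_{\psi(\eta_\theta)}\geq 1\}$, while with $\P^{\psi,\lambda}$ it should be $\eta$), so your argument matches the paper's in substance.
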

\begin{proof}
   Then similarly to Proposition 4.6 of \cite{adh:pgwttvmp}, we have:
\begin{align*}
\E^{\psi,\lambda}[F(\cs_{A_\lambda}(\lz))|A_{\lz}= \theta]
&=\frac{\eta\psi'(\bar{\theta})}{\psi_{\theta}(\eta)}
\E^{\psi,\lz}\left[F(\cs_{\theta}(\lz))L_{\theta}(\lambda)
\ind_{\{L_{\theta}(\lambda)<\infty\}}\right]\\
&=
\frac{\psi'(\bar{\theta})}{\psi_{\theta}(\eta)}
\N^{\psi}\left[F(\cs_{\theta}(\lz))L_{\theta}(\lambda)
\ind_{\{1\leq L_{\theta}(\lambda)<\infty\}}\right]\\
&=
\frac{\psi'(\bar{\theta})}{\psi_{\theta}(\eta)}
\N^{\psi_{\theta}}\left[F(\cs_{0}(\psi_{\theta}(\eta)))
L_{0}(\psi_{\theta}(\eta))
 \ind_{\{1\leq L_{0}(\psi_{\theta}(\eta)))<\infty\}}\right]\\
&=
\frac{\psi'(\bar{\theta})}{\psi_{\theta}(\eta)}
\N^{\psi_{\bar{\theta}}}\left[F(\cs_{0}(\psi_{\theta}(\eta)))
L_{0}(\psi_{\theta}(\eta))
\ind_{\{L_{0}(\psi_{\theta}(\eta))\geq 1\}}\right]\\
&=
\frac{\psi'(\bar{\theta})}{\psi_{\theta}(\eta)}
\N^{\psi}\left[F(\cs_{\bar{\theta}}(\psi(\eta-\bar{\theta}+\theta)))
L_{\bar{\theta}}(\psi(\eta-\bar{\theta}+\theta))
\ind_{\{L_{\bar{\theta}}(\psi(\eta-\bar{\theta}+\theta))\geq 1\}}\right]\\
&=
\frac{(\eta-\bar \theta+\theta) \psi'(\bar{\theta})}{\psi_{\bar
    \theta}(\eta-\bar \theta + \theta)}
\P^{\psi,\lambda}\left[F(\cs_{\bar{\theta}}(\psi(\eta-\bar{\theta}+\theta)))
L_{\bar{\theta}}(\psi(\eta-\bar{\theta}+\theta))\right],
\end{align*}
where we used Proposition \ref{prop:S-SL} for the first equality;
definition \reff{eq:def-P-psi-l} of $\P^{\psi,\lambda}$ and 
$\{M_\lambda\geq 1\}= \{L_\theta(\lambda)\geq 1\}$ for the second;
Proposition \ref{prop:marginal} for the third; Girsanov transformation
\reff{eq:GN} and  $\psi_\theta(\bar \theta-\theta)=0$ as well as the
fact that the number of the leaves are finite under $\N^{\psi_{\bar
    \theta}}$ as $\psi_{\bar \theta}$ is sub-critical for the fourth;
Proposition \ref{prop:marginal} as well as the equality 
$\psi_{\bar{\theta}}(\eta-{\bar{\theta}}+\theta)=\psi_{\theta}(\eta)$
for the fifth and sixth equalities.
\end{proof}

\subsection{An infinite CRT and its pruning}
\label{sec:infinite-CRT}
An  infinite  CRT was  constructed  in  \cite{ad:ctvmp}  which, because
of (H2)  is the L\'evy CRT conditioned to have infinite height. Notice
that since $\psi$ is critical the event of infinite height is of measure
zero. Before recalling
its construction, we stress that under $\P_r^\psi$, the root $\emptyset$
belongs  to  $\Br_\infty  $  and  has  mass  $\Delta_\emptyset=r$.  We
identify the half real line $[0,+\infty )$ with a real tree denoted by
$\llbracket 0,\infty \llbracket$ with the null mass measure. We denote
by $dx$ the length measure on $\llbracket 0,\infty \llbracket$. 
 Let
$\sum_{i\in I^*} \dz_{(x^{*}_i, T^{*,i})}$ be a Poisson point measure    on
$\llbracket 0,\infty \llbracket \times    {\T}$    with     intensity
$dx \, \bN^\psi[d\ct]$, with $\bN^\psi[d\ct]$ defined in \reff{eq:def-bN}.
The infinite CRT from \cite{ad:ctvmp}  is defined as:
\begin{equation}
   \label{eq:T*}
\ct^*=\lb\emptyset,\infty\lb\circledast_{i\in I^*}(T^{*,i}, x_i^{*}).
\end{equation}
We denote by $\P^{*,\psi}(d\ct^*)$ the distribution of $\ct^*$. 
Following  \cite{ad:ctvmp}  and  similarly  to the  setting  in  Section
\ref{sec:pruning},   we    consider   on   $\ct^*$    a   mark   process
$M^{\ct^*}(d\theta,dy)$ which is a  Poisson point measure on $\R_+\times
\ct^*$ with intensity:
\[
\ind_{[\emptyset,+\infty)}(\theta)d\theta\left(2\beta \ell^{\ct^*}(dy)+\sum_{i\in
   I^* }\sum_{x\in \Br_{\infty}(T^{*,i  })}\Delta_x\delta_x(dy)\right), 
\]
with the identification of $x^{*}_i$ as the root of $\ct^{*,i}$. In
particular nodes in $\lb\emptyset,\infty\lb$ with infinite degree will
be charged by $M^{\ct^*}$. 
For every $x\in\ct^*$, we set:
\[
\theta^*(x)=\inf\{\theta>0,\ M^{\ct^*}([0,\theta]\times \lb\emptyset,x\rb)>0\}.
\]
Then we  define the  pruned tree at  time $q$  as $\ct_q^*=\{x\in\ct^*,\
\theta^*(x)\ge q\}$  with the induced metric, root  $\emptyset$ and mass
measure the restriction to $\ct_q^*$ of the mass measure $\bm^{\ct^*}$.

Given $\ct^*$, let $\cp^{*}(dtdx)=\sum_{j\in J^*}\dz_{(t_j^*, y_j^*)}$
be a Poisson point  measure on $[0,\infty)\times\ct^*$ with intensity
$dt\, {\bf m}^{\ct^*}(dx)$. For
$\theta\geq 0$ and $\lz>0$, define the  pruned  sub-tree
$\tau_\theta^*(\lambda)$  containing the  root  and all  the ancestors  in
$\ct_\theta^*$ of the marked leaves of $\ct^*$:
\begin{equation}
   \label{subtree*}
\tau_{0}^*(\lambda)=\bigcup_{j\in J^*, t_j^*\leq  \lambda}
\llb\emptyset,  y_j^*\rrb 
\quad\text{and}\quad 
\tau_{\theta}^*(\lambda)=\tau_{0}^*(\lambda)\bigcap \ct_\theta^*. 
\end{equation}
We define $
\tau_{\theta}^*(0)=\bigcap_{\lz>0}\tau_{\theta}^*(\lambda)$,
and notice that $\tau_{\theta}^*(0)=\lb\emptyset,\infty\lb$ and that it
has no leaf. 
Similarly to \reff{eq:mass-tql}, we define the mass measure of
$\tau_{\theta}^*(\lambda)$ by:
\begin{equation}
   \label{eq:mass-tql*}
\bm^{\tau_\theta^*(\lambda)}=\inv{\psi_\theta(\eta)} \sum_{x\in
  \Lf (\tau_\theta^*(\lambda))} \delta_x,
\end{equation}
with $\eta=\psi^{-1}(\lambda)$ and the convention the mass measure is
zero if $\lambda=0$.  

We have a similar convergence result as  Theorem \ref{theo:cv-dGHPc}. 

\begin{theo}
   \label{theo:cv-dGHP*}
For all $\theta\geq 0$, we have $\P^{*,\psi}$-a.s.:
\[
\lim_{\lambda\rightarrow+\infty } d_{\text{GHP}}(\ct^*_\theta,
\tau_\theta^*(\lambda))=0.
\]
\end{theo}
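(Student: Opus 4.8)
The plan is to reduce the convergence in $d_{\text{GHP}}$ to a convergence of balls for $d^c_{\text{GHP}}$ and then to exploit the spine/sub-tree decomposition \reff{eq:T*} of $\ct^*$, so that on each grafted component the problem becomes an instance of the compact convergence already established in Theorem \ref{theo:cv-dGHPc}. First I would note that, by the very definition of $d_{\text{GHP}}$ as $\int_0^\infty \expp{-r}\left(1\wedge d^c_{\text{GHP}}(\cdot^{(r)},\cdot^{(r)})\right)dr$ and dominated convergence (the integrand is bounded by $1$), it suffices to prove that $\P^{*,\psi}$-a.s., for Lebesgue-almost every $r>0$, $\lim_{\lambda\to\infty} d^c_{\text{GHP}}\big(\ct_\theta^{*,(r)}, \tau_\theta^{*,(r)}(\lambda)\big)=0$, where $\tau_\theta^{*,(r)}(\lambda)=(\tau_\theta^*(\lambda))^{(r)}\subset\ct_\theta^{*,(r)}$. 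Fix $\theta\ge 0$ and $r>0$. Conditionally on $\ct^*$, both the mark process $M^{\ct^*}$ and the leaf-selection measure $\cp^*$ split into independent contributions along the spine $\lb\emptyset,\infty\lb$ and on each grafted tree $T^{*,i}$: indeed $\bm^{\ct^*}=\sum_{i\in I^*}\bm^{T^{*,i}}$ charges no point of the spine, and the intensity of $M^{\ct^*}$ restricted to $T^{*,i}$ is exactly the pruning intensity of Section \ref{sec:pruning} for $T^{*,i}$. Consequently $\ct_\theta^*$ is obtained by grafting the pruned components $\ct_\theta^{*,i}$ on the surviving part of the spine, and $\tau_\theta^*(\lambda)$ by grafting $\tau_\theta^{*,i}(\lambda):=\tau_\theta^*(\lambda)\cap\ct_\theta^{*,i}$ on the portion of that spine reached by the selected leaves.

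Next I would treat the two parts of the decomposition separately. For the grafted components, conditionally on $\ct^*$ each pair $(\ct_\theta^{*,i},\tau_\theta^{*,i}(\lambda))$ is distributed as the pair $(\ct_\theta,\tau_\theta(\lambda))$ attached to the L\'evy tree $T^{*,i}$ (whose law is a $\bN^\psi$-component, i.e. a combination of $\N^\psi$ and $\P_r^\psi$). For these I would invoke Theorem \ref{theo:cv-dGHPc} together with the distributional identity of Proposition \ref{prop:marginal}, which shows that $(\ct_\theta,\tau_\theta(\lambda))$ under $\N^\psi$ has the same law as $(\ct_0,\tau_0(\psi_\theta(\eta)))$ under $\N^{\psi_\theta}$, with the matching mass normalization $1/\psi_\theta(\eta)$ of \reff{eq:mass-tql}; since $\psi_\theta(\eta)\to\infty$ as $\lambda\to\infty$, Theorem \ref{theo:cv-dGHPc} applied to $\psi_\theta$ gives $d_{\text{GHP}}(\ct_\theta,\tau_\theta(\lambda))\to 0$. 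To upgrade this to the a.s. statement in the natural coupling $\tau_\theta(\lambda)=\tau_0(\lambda)\cap\ct_\theta$, I would use that $\lambda\mapsto\tau_\theta(\lambda)$ is non-decreasing and exhausts $\ct_\theta$ (so the metric part converges by monotone exhaustion), and that the total mass $L_\theta(\lambda)/\psi_\theta(\eta)$ converges a.s. to $\sigma_\theta$ by the backward-martingale convergence of Proposition \ref{prop:mart-l} (Remark \ref{rem:martL}), which pins the limiting mass measure as $\bm^{\ct_\theta}$.

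For the spine, I would show that the part of $\lb\emptyset,\infty\lb$ surviving the pruning and lying in the ball of radius $r$ is eventually entirely covered by $\tau_\theta^*(\lambda)$. Since the record process is non-increasing along the spine, the surviving spine is an initial segment; the grafting points $x_i^*$ carrying a surviving component of positive mass are dense along it (a Poissonian density statement identical in spirit to the one in Lemma \ref{lem:cv-dGHPc}), and as $\lambda\to\infty$ every such component eventually acquires a selected leaf, so the path $\tau_0^*(\lambda)$ reaches $x_i^*$ and, $x_i^*$ being surviving, the whole segment $\lb\emptyset,x_i^*\rb$ lies in $\tau_\theta^*(\lambda)=\tau_0^*(\lambda)\cap\ct_\theta^*$; hence the highest reached grafting point tends to the top of the surviving spine.

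Finally I would recombine within the ball of radius $r$: only finitely many grafted components have height exceeding any fixed $\varepsilon$ (because $\bN^\psi[H_{\text{max}}>\varepsilon]<\infty$), these finitely many converge in $d^c_{\text{GHP}}$ together with their masses by the second step, the surviving spine is covered by the third step, and the remaining low components contribute at most $\varepsilon$ to both the Hausdorff and the Prohorov parts; letting $\varepsilon\downarrow 0$ gives $d^c_{\text{GHP}}(\ct_\theta^{*,(r)},\tau_\theta^{*,(r)}(\lambda))\to 0$, and the reduction of the first step concludes. The main obstacle is the uniform control of the tail of infinitely many small grafted components, namely showing that their joint contribution to both the metric (Hausdorff) and the mass (Prohorov) inside the ball is uniformly small in $\lambda$; this must be carried out simultaneously with the a.s. (rather than merely distributional) upgrade at level $\theta$, which relies on the monotone exhaustion in $\lambda$ and on the martingale convergence of the normalized leaf count.
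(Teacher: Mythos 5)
Your route is genuinely different from the paper's. The paper does not decompose $\ct^*$ along the spine at all: it invokes the result of \cite{ad:ctvmp} that the truncated trees $\left(\left(\ct^*_\theta\right)^{(a)},a>0\right)$ are coded by a consistent family of continuous height functions $H^{(a)}$ with compact support, so that for each fixed $a$ the pair $\left(\left(\ct^*_\theta\right)^{(a)},\left(\tau^*_\theta(\lambda)\right)^{(a)}\right)$ falls exactly into the setting of Lemma \ref{lem:cv-dGHPc}: a compact tree $T^f$ coded by a continuous $f$, approximated by the sub-tree spanned by a Poissonian subdivision, with the single deterministic bound \reff{eq:distTf-Tfdiscret} controlling the Hausdorff and Prohorov errors simultaneously via the modulus of continuity of $f$ and $d_\text{P}(\text{Leb},\mu_{\Delta_\lambda})$. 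The conclusion then follows from the definition of $d_{\text{GHP}}$, which is your first reduction. The coding is precisely what renders the infinitely many small grafted components harmless: they are absorbed into the modulus of continuity of one height function, so no tail estimate over the components is ever needed.

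In your decomposition that difficulty survives, and you do not close it; two steps are genuinely missing. First, the a.s.\ upgrade of Theorem \ref{theo:cv-dGHPc} to level $\theta>0$ in the coupling $\tau_\theta(\lambda)=\tau_0(\lambda)\cap\ct_\theta$: monotone exhaustion handles the Hausdorff part, but the convergence of the total mass $L_\theta(\lambda)/\psi_\theta(\eta)$ to $\sigma_\theta$ from Proposition \ref{prop:mart-l} does not ``pin the limiting mass measure''---convergence of total masses says nothing about Prohorov convergence of the measures themselves. (This is repairable: the proof of Proposition \ref{prop:marginal} couples the marks so that the identity in law holds jointly in $\lambda$, and one can then transfer the a.s.\ statement of Theorem \ref{theo:cv-dGHPc} applied to $\psi_\theta$; but that transfer is the argument you would actually have to write.) Second, and more seriously, the Prohorov contribution of the infinitely many grafted components of height at most $\varepsilon$ inside the ball of radius $r$: you would need the discrepancy between $\bm^{\ct^*_\theta}$ and $\bm^{\tau^*_\theta(\lambda)}$, summed over all these components, to be small uniformly in $\lambda$, and component-wise a.s.\ convergence gives no uniformity over an infinite family. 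You explicitly flag this as ``the main obstacle'' without resolving it, so as written the proposal is a proof strategy with an acknowledged hole rather than a proof; the paper's height-function coding is the clean way around it.
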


\begin{proof}
   According to \cite{ad:ctvmp}, there exists a family of random continuous
   functions $(H^{(a)}, a>0)$ with compact support such that: $H^{(a)}$
     takes values in $[0,a]$;  for all
   $0<b<a$ and $t\geq 0$, we have:
\[
H^{(b)}(t)=H^{(a)}(C^{-1}_{b,a}(t))
\quad\text{with}\quad
C_{b,a}(s)=\int_0^s \ind_{\{H^{(a)}(r)\leq b\}}\, dr;
\]
and $\left(\left(\ct_\theta^*\right)^{(a)}, a>0\right)$ under $\P^{*,\psi}$ is
distributed as $\left(\ct^{H^{(a)}}, a>0\right)$. Following the proof of
Lemma \ref{lem:cv-dGHPc}, we get that for all $a>0$, $\P^{*,\psi}$ a.s.
\[
\lim_{\lambda\rightarrow+\infty
}d_{\text{GHP}}^c\left(\left(\ct_\theta^*\right)^{(a)} ,
  \left(\tau_\theta^*(\lambda)\right)^{(a)} \right) =0.
\]
This and the definition of $d_{\text{GHP}}$ gives the result. 
\end{proof}


\begin{rem}
  \label{rem:recons}   Similarly   to  Theorem   \ref{theo:decomp-tree},
  according  to the  argument  in \cite{cw:ggwt},  we could  reconstruct
  $\ct^*$   from   $\tau_0^*(\lambda)$.    Recall   \reff{eq:def-Gamma}.
  Conditionally on $\tau_0^*(\lambda)$, $\ct_0^*$ is distributed as:
\[
\tilde \tau_0^*(\lambda)
\circledast_{i\in I}(\ct_i^*, x_i^*)
\circledast_{x\in \Br(\tilde\tau_0^*(\lambda))} (\ct_x^*, x),
\]
with:
\begin{itemize}
   \item $\tilde \tau_0^*(\lambda)$ as $\tau_0^*(\lambda)$ but with 0 as mass
measure,
\item 
$\sum_{i\in I} \delta_{(x_i^*, \ct_i^*)}$ is a random Poisson point measure
on $\tilde\tau_0^*(\lambda)\times \T$ with intensity given by
$\ell^{\tilde \tau_0^*(\lambda)}(dx)\, \bN^{\psi_\eta}[d\ct]$,
\item conditionally on 
$\sum_{i\in I} \delta_{(x_i^*, \ct_i^*)}$, the trees $\left(\ct^*_x, x\in
  \Br(\tilde \tau_0^*(\lambda) )\right)$ are independent
  with $\ct_x^*$ is distributed as:
\[
\int \Gamma^\psi_{\kappa(x),\lambda} (dr) \;
  \P_r^{\psi_\eta}[d\ct].
\]
 \end{itemize}

\end{rem}


\subsection{Distribution of the sub-tree of the infinite CRT}
\label{sec:dist-infinite-CRT}
Recall that $\tilde \tau_0(\lambda)$ is under $\P^{\psi,\lambda}$ a
Galton-Watson tree with distribution $\rP^{\psi,\lambda}$. We shall now
describe the distribution of  $\tilde \tau_0^*(\lambda)$  under
$\P^{\psi}$, which can be seen as a  Galton-Watson  tree with distribution $\rP^{\psi,\lambda}$
conditionally on the non extinction event.

Let $K$ be an integer-valued random variable with generating function
$g_{(\psi,\lambda)}$  defined by \reff{eq:def-g}. 
Since $\psi$ is critical, we have $g'_{(\psi,\lambda)}(1)=1$, which
implies that $g'_{(\psi,\lambda)}$ itself is the generating function of
a integer-valued random variable, say $K^*$. Since $g'_{(\psi,\lambda)}(0)=0$,
$K^*$ is a.s. positive. Notice that  the distribution of $K^*+1$
is the size-biased distribution of $K$. Let $(\tau^{k,*},
k\in \N^*)$ be independent random trees distributed as $\tau_0(\lambda)$
under $\P^{\psi,\lambda}$ (that is with distribution
$\rP^{\psi,\lambda}$ and mass measure given by  \reff{eq:mass-tql})
independent of $K^*$. We set:
\[
{\G}^*= \emptyset \circledast_{1\leq k\leq
  K^*}(\tau^{k,*},\emptyset) . 
\]

\begin{theo}\label{Propsub*} Let  $\lz>0$ and
  $\eta=\psi^{-1}(\lambda)$. Under $\P^{*,\psi}$,  $\tau_0^*(\lz)$ is
a rooted real tree distributed as:
\[
\lb \emptyset,\infty\lb
\circledast_{i\in I_0^*}(\G^{*,i},x_i^*),
\]
where $\sum_{i\in I_0^*}\dz_{x_i^*}$  is a  Poisson point measure on $\lb
\emptyset,\infty\lb$ with intensity $\psi'(\eta)dx$ and conditionally on
this Poisson point measure, the real trees $(\G^{*,i}, i\in I_0^*)$ are
independent  and distributed as $\G^*$. 
\end{theo}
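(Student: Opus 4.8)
The plan is to read off the law of $\tau_0^*(\lambda)$ directly from the Poissonian construction \reff{eq:T*} of $\ct^*$, reducing the whole statement to the law of the discrete subtree $\tau_0(\lambda)$ of a single bush grafted on the spine, computed under the measure $\bN^\psi$ conditioned to carry a marked leaf. The first observation is that the spine $\lb\emptyset,\infty\lb$ carries no mass, so $\bm^{\ct^*}=\sum_{i\in I^*}\bm^{T^{*,i}}$ and every atom $y_j^*$ of $\cp^*$ lies in one of the bushes $T^{*,i}$. Conditionally on $\ct^*$, the restriction of $\cp^*$ to $[0,\lambda]\times T^{*,i}$ is Poisson of intensity $\lambda\,\bm^{T^{*,i}}$, so $T^{*,i}$ contributes to $\tau_0^*(\lambda)$ with probability $1-\expp{-\lambda\sigma^{T^{*,i}}}$, independently across $i$, and its contribution is exactly a copy of $\tau_0(\lambda)$ built inside $T^{*,i}$ and rooted at $x_i^*$.

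First I would apply the thinning and marking theorems to the Poisson measure $\sum_{i\in I^*}\delta_{(x_i^*,T^{*,i})}$ of intensity $dx\,\bN^\psi[d\ct]$. This yields that the pairs (location, contribution) of the contributing bushes form a Poisson measure on $\lb\emptyset,\infty\lb\times\T$ with intensity $dx\,\bN^\psi[\ind_{\{M_\lambda\geq1\}}\,;\,\tau_0(\lambda)\in d\ct']$. By \reff{eq:bN-moment} with $q=\eta$, together with $\psi(\eta)=\lambda$ and the criticality assumption $\psi'(0)=0$, the total intensity is $\bN^\psi[1-\expp{-\lambda\sigma}]=\psi'(\eta)-\psi'(0)=\psi'(\eta)$, so the locations $x_i^*$ form a Poisson measure of intensity $\psi'(\eta)\,dx$. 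As this intensity is locally finite but of infinite total mass on $[0,\infty)$, the contributing points accumulate only at $+\infty$ and their ancestral segments exhaust the spine; hence $\tau_0^*(\lambda)=\lb\emptyset,\infty\lb\circledast_{i\in I_0^*}(\G^{*,i},x_i^*)$ with $\sum_{i\in I_0^*}\delta_{x_i^*}$ Poisson of intensity $\psi'(\eta)\,dx$ and, by the marking theorem, the $\G^{*,i}$ independent with the law of $\tau_0(\lambda)$ under $\bN^\psi[\,\cdot\,|\,M_\lambda\geq1]$.

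The heart of the argument, and the step I expect to be the main obstacle, is to identify this conditional law with that of $\G^*$. Mimicking the computation in the proof of Lemma \ref{lem:cn-tl}, I would first determine the law of the number $N_\emptyset$ of bushes hanging above the root. Splitting $\bN^\psi=2\beta\N^\psi+\int_0^\infty r\,\pi(dr)\,\P_r^\psi$ and using that $N_\emptyset\in\{0,1\}$ with $\N^\psi[N_\emptyset=1]=\N^\psi[M_\lambda\geq1]=\eta$ under $\N^\psi$, while $N_\emptyset$ is Poisson of mean $r\eta$ under $\P_r^\psi$ (the bushes grafted at the root form, by the Poissonian structure of $\cn_0^\ct$ under $\P_r^\psi$, a measure of intensity $r\,\N^\psi$ thinned with survival probability $\N^\psi[M_\lambda\geq1]=\eta$), I expect
\[
h(u):=\bN^\psi\!\left[u^{N_\emptyset}\ind_{\{N_\emptyset\geq1\}}\right]
=2\beta\eta\,u+\int_{(0,+\infty)}r\,\pi(dr)\left(\expp{-r\eta(1-u)}-\expp{-r\eta}\right).
\]
Differentiating \reff{eq:psi} shows the right-hand side equals $\psi'(\eta)-\psi'((1-u)\eta)=\psi'(\eta)\,g'_{(\psi,\lambda)}(u)$ by \reff{eq:def-g}. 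Since $\psi$ is critical, \reff{eq:g-01} gives $g'_{(\psi,\lambda)}(1)=1$, so $h(1)=\psi'(\eta)$ and the generating function of $N_\emptyset$ under $\bN^\psi[\,\cdot\,|\,M_\lambda\geq1]$ is $h(u)/h(1)=g'_{(\psi,\lambda)}(u)$; that is, $N_\emptyset$ has the law of $K^*$.

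To conclude I would check that, conditionally on $N_\emptyset=k$, the $k$ bushes above the root are independent copies of $\tau_0(\lambda)$ under $\P^{\psi,\lambda}$, each a $\cg(\psi,\lambda)$ of law $\rP^{\psi,\lambda}$; this holds term by term in the decomposition of $\bN^\psi$, by Corollary \ref{cor:tau-law} for the single bush in the $\N^\psi$ term and by the Poissonian structure of $\cn_0^\ct$ under $\P_r^\psi$ combined with Corollary \ref{cor:tau-law} for the thinned bushes in the $\P_r^\psi$ term, the conditional bush law being the same in every term. Hence $\tau_0(\lambda)$ under $\bN^\psi[\,\cdot\,|\,M_\lambda\geq1]$ is distributed as $\emptyset\circledast_{1\leq k\leq K^*}(\tau^{k,*},\emptyset)=\G^*$; the mass measures match automatically, since \reff{eq:mass-tql*} and \reff{eq:mass-tql} both weight each leaf by $1/\psi(\eta)=1/\lambda$. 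Together with the second paragraph, this gives the claimed description of $\tau_0^*(\lambda)$, completing the proof.
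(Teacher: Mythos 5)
Your proposal is correct and follows essentially the same route as the paper's proof: decompose $\tau_0^*(\lambda)$ along the spine via the Poisson construction \reff{eq:T*}, thin to the bushes carrying a marked leaf (Poisson of intensity $\bN^\psi[M_\lambda\geq 1]\,dx=\psi'(\eta)\,dx$, using criticality), and identify the law of a contributing bush with $\G^*$ by computing $h(u)=\bN^\psi[u^{N_\emptyset}\ind_{\{N_\emptyset\geq 1\}}]$ and checking $h(u)/h(1)=g'_{(\psi,\lambda)}(u)$ exactly as in the proof of Lemma \ref{lem:cn-tl}. Your extra step verifying that, given $N_\emptyset=k$, the $k$ sub-bushes are i.i.d.\ with law $\rP^{\psi,\lambda}$ is left implicit in the paper but is a welcome addition rather than a deviation.
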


\begin{proof}
By construction, thanks to \reff{eq:T*}, we have:
\[
\tau_0^*(\lz)= \lb\emptyset,\infty\lb\circledast_{i\in
  I^*}(\tau^{*,i}(\lambda), x^*_i),
\]
with $\tau^{*,i}(\lambda)=\bigcup_{j\in J^*, t_j^*\leq  \lambda,
  x^*_i\preccurlyeq y^*_j}
\llb x_i^*,  y_j^*\rrb $ distributed as $\tau_0(\lambda)$ under
$\bN^\psi[d\ct]$. The marked Poisson point measure $\sum_{i\in I^*} 
\ind_{\tau^{*,i}(\lambda)\neq \emptyset\}}\dz_{x_i^*}$ is a Poisson
point measure on  
$\lb\emptyset,\infty\lb$ with intensity $\bN^\psi[M_\lambda\geq 1]\, dx
=\psi'(\eta)\, dx$. 

Let $I^*_0=\{i\in I^*; \tau^{*,i}(\lambda)\neq
\emptyset\}$. The  sub-trees $(\tau^{*,i}(\lambda), i\in I^*_0)$ are
independent and distributed as $\tau_0(\lambda)$ under
$\bN^\psi[\, \cdot\, | M_\lambda\geq 1]$. Let $N_\emptyset$ be the degree
of the root of $\tau_0(\lambda)$. The theorem will be proved once we
check that $N_\emptyset$  under $\bN^\psi[\, \cdot\, | M_\lambda\geq 1]$  is
distributed as $K^*$. 
Following the proof of Lemma
\ref{lem:cn-tl}, we set $h^*(u) =  \bN^\psi\left[u^{N_\emptyset}
  \ind_{\{N_\emptyset\geq 1\}} \right]$, and we have for $u\in [0,1]$:
\begin{align*}
 h^*(u)
&= 2 \beta \N^\psi[M_\lambda\geq 1] u +
\int_{(0,+\infty )} r\pi(dr) \,
\E^\psi_r\left[u^{N_\emptyset}\ind_{\{N_\emptyset\geq 1\}}\right]\\
&= 2 \beta \eta u + \int_{(0,+\infty )} r\pi(dr) \, \left(\expp {-r\eta
    (1-u)}-  \expp{-r\eta}\right). 
\end{align*}
Elementary computations yield $g_{(\psi,\lambda)}'(u)=h^*(u)/h^*(1)$. Thus
$N_\emptyset$ under $\bN^\psi[\, \cdot\, | M_\lambda\geq 1]$ is
distributed as $K^*$. This ends the proof. 
\end{proof}

We give  a similar representation  formula for $\tau^*_\theta(\lambda)$.
Let $K^*_\theta$  be an  integer-valued random variable  with generating
function $g_\theta'/g_\theta'(1)$, see definitions \reff{eq:def-g-q} and
\reff{eq:def-g}.     Since     $g'_\theta(0)=0$,     $K^*_\theta$     is
a.s.  positive. Notice that  the distribution  of $K_\theta^*+1$  is the
size-biased   distribution  of   $K_\theta$  with   generating  function
$g_\theta$.  Let $(\tau_\theta^{k,*},  k\in \N^*)$ be independent random
trees  distributed as  $\tau_\theta(\lambda)$  under $\P^{\psi,\lambda}$
(that  is  with  distribution $\rP^{\psi_\theta,\psi_\theta(\eta)}$  and
mass    measure   given    by    \reff{eq:mass-tql})   independent    of
$K^*_\theta$. We set:
\[
\G_\theta^*= \emptyset \circledast_{1\leq k\leq
  K^*_\theta}(\tau_\theta^{k,*},\emptyset) . 
\]

\begin{theo}
\label{theo:t*_q}
 Let  $\lz>0$ and
  $\eta=\psi^{-1}(\lambda)$. For  $\theta>0$, 
under $\P^{*,\psi}$,  $\tau_\theta^*(\lz)$ is
a rooted real tree distributed as:
\[
\lb \emptyset,E_\theta\rb
\circledast_{i\in I_\theta^*}(\G_\theta^{*,i},x_i^*),
\]
where 
\begin{itemize}
\item $\lb \emptyset,E_\theta\rb$ is a  real tree rooted at $\emptyset$ with no
  branching point  and zero mass measure and  such that $d(\emptyset,E_\theta)$
  is an exponential random variable with parameter $\psi'_\theta(0)$,
   \item $\sum_{i\in I_\theta^*}\dz_{x_i^*}$  is an independent 
     Poisson point measure on $\lb 
\emptyset, E_\theta\rb$ with intensity $[\psi'_\theta(\eta)-
\psi'_\theta(0)]\, dx$,
\item conditionally on $E_\theta$ and $\sum_{i\in I_\theta^*}\dz_{x_i^*}$, the
  real trees $(\G^{*,i}, i\in I_0^*)$ are 
independent  and distributed as $\G^*_\theta$. 
\end{itemize}
\end{theo}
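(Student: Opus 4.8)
The plan is to mimic the proof of Theorem~\ref{Propsub*} (the case $\theta=0$), but now feeding the pruning at level $\theta$ into both the spine and the grafted trees. By the construction \reff{eq:T*} of $\ct^*$ and the definition \reff{subtree*}, writing $\tau^{*,i}_\theta(\lambda)$ for the trace on $T^{*,i}$ of the pruned and leaf-selected sub-tree, one has
\[
\tau_\theta^*(\lambda)=\lb\emptyset,E_\theta\rb\circledast_{i}(\tau^{*,i}_\theta(\lambda),x_i^*),
\]
where $\lb\emptyset,E_\theta\rb$ is the part of the spine $\lb\emptyset,\infty\lb$ surviving the pruning at level $\theta$ and the grafting runs over those $i$ with $x_i^*\preccurlyeq E_\theta$ and $\tau^{*,i}_\theta(\lambda)\neq\emptyset$. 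The whole structure being a single (conditionally) Poisson object along the spine, the core of the argument is to split it into independent families of cut marks and of contributing graft points.

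First I would describe the marks that cut the spine. Since the length measure of $\ct^*$ restricted to $\lb\emptyset,\infty\lb$ is $dx$, the uniform marks produce cuts before time $\theta$ at rate $2\beta\theta\,dx$, while each graft point coming from the $\int r\,\pi(dr)\,\P_r^\psi$ part of $\bN^\psi$ (recall Section~\ref{sec:meas-LT}) carries at $x_i^*$ an infinite branching point of mass $r$, marked before $\theta$ with probability $1-\expp{-r\theta}$, hence cutting at rate $\int r(1-\expp{-r\theta})\pi(dr)\,dx$. As $\psi$ is critical we have $\psi'(0)=0$, so the total cut rate per unit length is $2\beta\theta+\int r(1-\expp{-r\theta})\pi(dr)=\psi'(\theta)=\psi'_\theta(0)$, and $d(\emptyset,E_\theta)$ is exponential with parameter $\psi'_\theta(0)$. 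A given infinite-branching graft point is a cut point when its root mark fires and is otherwise eligible to contribute, and this dichotomy is an independent Bernoulli thinning; together with the independence of the uniform marks and of the internal marks of each $T^{*,i}$, this shows that the cut points and the contributing graft points form independent Poisson processes.

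Next I would compute the intensity of the contributing graft points and the law of the attached sub-tree. Using Proposition~\ref{prop:marginal} one gets $\N^\psi[\tau_\theta(\lambda)\neq\emptyset]=\N^{\psi_\theta}[M_{\psi_\theta(\eta)}\geq1]=\eta$, and the representation of $\P_r^\psi$ from Section~\ref{sec:meas-LT} gives $\P_r^\psi[\tau_\theta(\lambda)\neq\emptyset]=1-\expp{-r\eta}$. Splitting $\bN^\psi=2\beta\N^\psi+\int r\,\pi(dr)\,\P_r^\psi$ and weighting the second part by the root-survival probability $\expp{-r\theta}$ yields the contributing intensity
\[
2\beta\eta+\int r\,\expp{-r\theta}(1-\expp{-r\eta})\pi(dr)=\psi'(\theta+\eta)-\psi'(\theta)=\psi'_\theta(\eta)-\psi'_\theta(0),
\]
as required. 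For the sub-tree law I would argue exactly as in Lemma~\ref{lem:cn-tl}: with $N_\emptyset$ the root degree of $\tau_\theta(\lambda)$, set $h_\theta^*(u)$ to be $\bN^\psi$ applied to $u^{N_\emptyset}\ind_{\{N_\emptyset\geq1\}}$, weighted by the root-survival probability. Using that $N_\emptyset\in\{0,1\}$ under $\N^\psi$ and is Poisson$(r\eta)$ under $\P_r^\psi$, the same splitting gives
\[
h_\theta^*(u)=2\beta\eta u+\int r\,\expp{-r\theta}\big(\expp{-r\eta(1-u)}-\expp{-r\eta}\big)\pi(dr)=\psi'_\theta(\eta)\,g_\theta'(u),
\]
the last identity being a direct differentiation of $g_\theta=g_{(\psi_\theta,\psi_\theta(\eta))}$ (recall \reff{eq:def-g}, \reff{eq:def-g-q} and \reff{eq:g-01}). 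Hence $h_\theta^*(u)/h_\theta^*(1)=g_\theta'(u)/g_\theta'(1)$, so the root degree of a contributing sub-tree is distributed as $K_\theta^*$; the branching property (and Corollary~\ref{cor:tau-law}) then makes the sub-trees hanging at the root independent copies of $\tau_\theta(\lambda)$ under $\P^{\psi,\lambda}$, so each contributing sub-tree is distributed as $\G_\theta^*$. Assembling the exponential spine $\lb\emptyset,E_\theta\rb$, the Poisson contributing points on it with intensity $[\psi'_\theta(\eta)-\psi'_\theta(0)]\,dx$, and the i.i.d.\ $\G_\theta^*$ marks then gives the stated representation.

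The main obstacle I expect is the bookkeeping of the single Poisson structure on the spine: one must justify carefully that the cut marks and the contributing graft points are genuinely independent Poisson families, which hinges on the dual role of the infinite-branching graft points (killers when their root mark fires, contributors otherwise) being an independent thinning. Once $h_\theta^*$ is set up, the generating-function identity $h_\theta^*=\psi'_\theta(\eta)\,g_\theta'$ — the crux of the sub-tree identification — is a routine computation parallel to the one in Theorem~\ref{Propsub*}.
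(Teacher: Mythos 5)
Your proposal is correct and follows essentially the same route as the paper: decompose $\tau^*_\theta(\lambda)$ into the surviving spine $\lb\emptyset,E_\theta\rb$ and an independent Poisson family of grafts, then identify the root degree of each graft as $K^*_\theta$ via the generating-function identity $h^*_\theta(u)=\psi'_\theta(\eta)-\psi'_\theta(\eta(1-u))=\psi'_\theta(\eta)\,g'_\theta(u)$. The only (harmless) difference is that you derive the exponential law of $d(\emptyset,E_\theta)$ and the contributing intensity directly under $\bN^\psi$ with the explicit root-survival weight $\expp{-r\theta}$, whereas the paper quotes the law of $E_\theta$ from the reference on the continuum-tree-valued process and first shifts the graft law to $\tau_0(\psi_\theta(\eta))$ under $\bN^{\psi_\theta}$ before computing.
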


\begin{proof}
   Recall notations of the proof of Theorem \ref{Propsub*}. 
The distribution of $d(\emptyset,E_\theta)$ is given in
   \cite{ad:ctvmp}. By construction, thanks to \reff{eq:T*}, we have:
\[
\tau_0^*(\lz)= \lb\emptyset,E_\theta\rb\circledast_{i\in
  I^*}(\tau^{*,i}_\theta(\lambda), x^*_i),
\]
with               $\tau^{*,i}_\theta(\lambda)=\tau^{*,i}(\lambda)\bigcap
\ct^{*}_\theta$. Let $N_{\emptyset,\theta}$ (resp. $N_\emptyset'$) be the
degree       of      the       root       of      $\tau_\theta(\lambda)$
(resp.         $\tau_0(\psi_\theta(\eta))$).         Notice         that
$\tau^{*,i}_\theta(\lambda)$  is  distributed as  $\tau_\theta(\lambda)$
under    $\bN^\psi[d\ct,   \,   N_\emptyset\geq    1]$   that    is   as
$\tau_0(\psi_\theta(\eta))$ under $\bN^{\psi_\theta}[d\ct]$. The rate at
which sub-trees are grafted on the spine 
$\lb\emptyset,E_\theta\rb$ is given by:
\[
\bN^{\psi_\theta}\left[N_\emptyset'\geq 1\right]= \psi_\theta'(\eta)
-\psi'_\theta(0).
\]
Then to end
the proof, it is
enough to check that  $N_\emptyset'$ under $\bN^{\psi_\theta}[\, \cdot\,
| \, N_\emptyset\geq 1]$ is distributed as $K^*_\theta$. Elementary
computations give:
\[
h^*_\theta(u)=\bN^{\psi_\theta}\left[u^{N_\emptyset'}
  \ind_{\{N_\emptyset'\geq 1\}}\right]
= \psi_\theta(\eta) -\psi'_\theta(\eta(1-u)),
\]
so  that  $h^*_\theta(u)/h^*_\theta(1)=g'_\theta(u)/g'_\theta(1)$.  Thus,
$N_\emptyset'$ under $\bN^{\psi_\theta}[\,  \cdot\, | \, N_\emptyset\geq
1]$ is distributed as $K^*_\theta$.
\end{proof}

We also provide a recursive distribution of the tree
$\tau^*_\theta(\lambda)$. Let
$a_\theta(\lambda)=\psi_\theta'(0)/\psi'_\theta(\eta)=1-g'_\theta(1)$.

\begin{cor}
\label{cor:t*_q} 
Let  $\lz>0$ and
  $\eta=\psi^{-1}(\lambda)$. For  $\theta>0$, under $\P^{*,\psi}$, 
 $\tau_{\theta}^*(\lz)$ is
a rooted real tree distributed as $\lb
\emptyset,E_\theta(\lambda)\rb$ with probability $a_\theta(\lambda)$
 and with probability $1-a_\theta(\lambda)$ as:
\[
\lb
\emptyset,E_\theta(\lambda)\rb
\circledast_{0\leq i\leq 1}(\G^{*,i}_\theta,E_\theta(\lambda)),
\]
where
\begin{itemize}
\item $\lb \emptyset,E_\theta(\lambda)\rb$ is  a real tree rooted at $\emptyset$
  with  no  branching  point  and   zero  mass  measure  and  such  that
  $d(\emptyset,E_\theta(\lambda))$  is  an   exponential  random  variable  with
  parameter $\psi'_{\theta}(\eta)$,
   \item conditionally on $E_\lambda(\theta)$, $\G^{*,0}_\theta$ and
     $\G^{* ,1}$ are independent and distributed respectively as
     $\G^*_\theta$ and  $\tau^*_\theta(\lambda)$. 
        \end{itemize}
\end{cor}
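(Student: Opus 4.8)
The plan is to obtain the recursive description directly from the explicit spine decomposition of Theorem \ref{theo:t*_q} by splitting the spine at its first point of interest. Recall that under $\P^{*,\psi}$ the tree $\tau_\theta^*(\lz)$ is built on a spine $\lb\emptyset,E_\theta\rb$ whose length $d(\emptyset,E_\theta)$ is exponential with parameter $\psi'_\theta(0)$, carrying an independent Poisson point measure $\sum_{i\in I_\theta^*}\dz_{x_i^*}$ of intensity $[\psi'_\theta(\eta)-\psi'_\theta(0)]\,dx$ at whose atoms independent copies of $\G^*_\theta$ are grafted. I would encode this by two independent mechanisms running along $[0,+\infty)$: a \emph{death} clock $S$ of parameter $\psi'_\theta(0)$ locating the tip of the spine, and a Poisson process of grafting points of rate $\psi'_\theta(\eta)-\psi'_\theta(0)$. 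Writing $G_1$ for the first grafting point, the first event occurs at $E_\theta(\lambda):=\min(S,G_1)$, and by superposition of an exponential and the first point of an independent Poisson process, $d(\emptyset,E_\theta(\lambda))$ is exponential with parameter $\psi'_\theta(0)+(\psi'_\theta(\eta)-\psi'_\theta(0))=\psi'_\theta(\eta)$, as required.

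At this first event there are two alternatives. The probability that it is the tip of the spine is $\psi'_\theta(0)/\psi'_\theta(\eta)=a_\theta(\lambda)$; indeed, applying \reff{eq:g-01} with $\psi$ and $\lambda$ replaced by $\psi_\theta$ and $\psi_\theta(\eta)$ gives $g'_\theta(1)=1-\psi'_\theta(0)/\psi'_\theta(\eta)$, so $1-g'_\theta(1)=a_\theta(\lambda)$. On this event no grafting occurs below $E_\theta(\lambda)$, so $\tau_\theta^*(\lz)$ reduces to the segment $\lb\emptyset,E_\theta(\lambda)\rb$, the first alternative of the statement. With the complementary probability $1-a_\theta(\lambda)$ the first event is a grafting point, at which an independent copy $\G^{*,0}_\theta$ of $\G^*_\theta$ is attached at $E_\theta(\lambda)$.

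The \emph{crux} is the regeneration claim: conditionally on the first event being a grafting point, the part of the configuration lying strictly above $E_\theta(\lambda)$ --- the residual spine together with all later graftings --- is, once rooted at $E_\theta(\lambda)$, an independent copy of $\tau_\theta^*(\lz)$, furnishing the tree $\G^{*,1}$ of the statement. This is precisely where the memoryless property intervenes: by lack of memory of the exponential law the residual life of $S$ past $E_\theta(\lambda)$ is again exponential with parameter $\psi'_\theta(0)$, by the strong Markov property of the Poisson process the grafting points past $E_\theta(\lambda)$ form a fresh Poisson process of the same rate carrying i.i.d. copies of $\G^*_\theta$, and the whole shifted configuration is independent of $\G^{*,0}_\theta$. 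Matching it term by term with the spine decomposition of Theorem \ref{theo:t*_q} shows it is distributed as $\tau_\theta^*(\lz)$. Since the mass measure \reff{eq:mass-tql*} is the restriction to the leaves on both alternatives, it is automatically consistent, which finishes the argument. The only point demanding care is to confirm that the grafted tree $\G^{*,0}_\theta$ at a Poisson atom is independent of the post-atom configuration $\G^{*,1}$, which is built into the marking and restriction properties of Poisson point measures.
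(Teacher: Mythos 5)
Your proof is correct and follows essentially the same route as the paper: a decomposition of $\tau_\theta^*(\lambda)$ from Theorem \ref{theo:t*_q} at the lowest branching point, with the competing exponentials giving the $\mathrm{Exp}(\psi'_\theta(\eta))$ law of $d(\emptyset,E_\theta(\lambda))$ and the probability $a_\theta(\lambda)=\psi'_\theta(0)/\psi'_\theta(\eta)$ of having no branching point, and the memoryless/branching property supplying the regeneration of $\G^{*,1}$ as a copy of $\tau^*_\theta(\lambda)$. You merely spell out in more detail what the paper compresses into ``the branching property.''
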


Notice that the number of children of $E_\theta(\lambda)$ has generating
function $1-g'_\theta(1)+ u g'_\theta(u)$. 

\begin{proof}
   This is a direct consequence of Theorem \ref{theo:t*_q}, when
   considering the decomposition of $\tau_{\theta}^*(\lz)$ with respect
   to the lowest branching point and using the branching
   property. Notice that there is no such  branching point (and then
   $\tau^*_\theta(\lambda)$ is reduce to a spine) if the point measure
   $\sum_{i\in I^*_\theta} \delta_{x^*_i} $ defined in Theorem
   \ref{theo:t*_q} is zero. This happens with probability
   $a_\theta(\lambda)$. 
\end{proof}

\begin{rem}
   \label{rem:pruning*}
Notice that $\tau_{\theta}^*(\lz)$ could be obtained from
$\tau_0^*(\lz)$ by a similar pruning procedure as the one defined in Section 6.1.
\end{rem}

\subsection{Sub-tree process from the ascension time}
We start with a remark on size-biased discrete Galton-Watson trees. 
\begin{rem}
   \label{rem:GW} 
Let $\cg$ be a discrete sub-critical Galton-Watson tree starting with one
root and with $g$ as
generating function of the reproduction law. Let $L$ be the number of
leaves of $T$. We have $\E[L]=g(0)/[1-g'(1)]$. Let $\cg^*$ be distributed
as the size-biased distribution of $T$ with respect to $L$, that is for
any non-negative measurable function:
\[
\E[F(\cg^*)]=\frac{\E[LF(\cg)]}{\E[L]}\cdot
\]
Let $N_\emptyset(t)$  be the number  of children of  the root of  a tree
$t$.  For  example $N_\emptyset(\cg)$  has  generating  function $g$.  The
following result can be proved  inductively by decomposing the tree with
respect to the children of the root.

 The distribution of $\cg^*$ is characterized as follows.
$N_\emptyset(\cg^*)$ has generating function $u\rightarrow 
1-g'(1)+ug'(u)$. If $N_\emptyset(\cg^*)\geq 1$, then label from 1 to
$N_\emptyset(\cg^*)$ the children of the root and by $ \cg_i$ the sub-tree
attached to children $i$. Then $(\cg_1, \ldots, \cg_{N_\emptyset(\cg^*)})$ are independent trees; they are  distributed as $\cg$
but for $\cg_I$, for some random index $I$ uniform on $\{1, \ldots,
N_\emptyset(\cg^*)\}$, which is distributed as $\cg^*$. 
\end{rem}

We denote
$\cs_\theta^*(\lambda)=(\tau_{\theta+q}^*(\lambda), q\geq 0)$. 

\begin{proposition}
\label{Peqdis}
For $\theta>0$, $\lz>0$ and non-negative functionals $F$, we have:
\[
\frac{\eta\psi'(\theta)}{\psi_{\theta}(\eta)}\E^{\psi,\lambda}
[F(\cs_{\theta}(\lz))L_{\theta}(\lambda) ]
=\E^{*,\psi}[F(\cs^{*}_{\theta}(\lz))].
\]
\end{proposition}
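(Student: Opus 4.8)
The plan is to use the Markov property of both sub-tree processes to reduce the identity to its time-$\theta$ marginal, and then to recognise that marginal as the size-biasing of a Galton-Watson tree by its number of leaves. For the reduction, note first that by Theorem \ref{theo:pruning} the process $(\tau_{\theta+q}(\lambda), q\geq 0)$ under $\P^{\psi,\lambda}$ is the pruning process of Section \ref{sec:subgrowth}, hence is Markov in $q$, and that by Remark \ref{rem:pruning*} the process $(\tau^*_{\theta+q}(\lambda), q\geq 0)$ under $\P^{*,\psi}$ is obtained from $\tau^*_\theta(\lambda)$ by the same forward pruning, hence is Markov with the same transition kernel --- one depending, as a function of the current tree, only on its skeleton, its branching points and on $\psi,\eta$. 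Writing $\Phi(T)=\E[F(\cs_\theta(\lambda))\mid \tau_\theta(\lambda)=T]$ for the expectation of $F$ under this pruning started from $T$, the common kernel gives $\E^{*,\psi}[F(\cs^*_\theta(\lambda))\mid \tau^*_\theta(\lambda)=T]=\Phi(T)$ as well. Since $L_\theta(\lambda)$ is $\tau_\theta(\lambda)$-measurable, conditioning on the initial tree reduces the claim to the single-time identity
\[
\E^{*,\psi}[\Phi(\tau^*_\theta(\lambda))]=\frac{\eta\psi'(\theta)}{\psi_\theta(\eta)}\,\E^{\psi,\lambda}[\Phi(\tau_\theta(\lambda))\,L_\theta(\lambda)]
\]
for an arbitrary non-negative measurable $\Phi$ on $\T$.

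To prove this single-time identity I would identify $\tau^*_\theta(\lambda)$ as the version of $\tau_\theta(\lambda)$ size-biased by its number of leaves. By Corollary \ref{cor:tau-law}, $\tau_\theta(\lambda)$ is the Galton-Watson real tree $\cg(\psi_\theta,\psi_\theta(\eta))$, with offspring generating function $g_\theta$ and exponential branch lengths of parameter $\psi'_\theta(\eta)$. Remark \ref{rem:GW} describes the leaf-size-biased law of such a tree by its spine decomposition: the distinguished branching vertex has offspring generating function $u\mapsto 1-g'_\theta(1)+u\,g'_\theta(u)$, one uniformly chosen child continues as the size-biased tree and the remaining children are independent copies of $\tau_\theta(\lambda)$. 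I would then match this, term by term, with the description of $\tau^*_\theta(\lambda)$ in Theorem \ref{theo:t*_q} and Corollary \ref{cor:t*_q}: the spine edge $\lb\emptyset,E_\theta(\lambda)\rb$ carries the same exponential parameter $\psi'_\theta(\eta)$; the first branching vertex has, by the observation following Corollary \ref{cor:t*_q}, generating function $1-g'_\theta(1)+u\,g'_\theta(u)$; and at that vertex the $K^*_\theta$ side-trees composing $\G^*_\theta$ are copies of $\tau_\theta(\lambda)$ while the spine continuation $\G^{*,1}$ is again $\tau^*_\theta(\lambda)$. As the children of an unordered real tree are exchangeable, the uniformly-distinguished child of Remark \ref{rem:GW} and the spine continuation of Corollary \ref{cor:t*_q} agree in law, so the two recursive descriptions coincide and $\tau^*_\theta(\lambda)$ indeed has the size-biased law. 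The normalisation then follows from $\E^{\psi,\lambda}[L_\theta(\lambda)]=g_\theta(0)/(1-g'_\theta(1))=\psi_\theta(\eta)/(\eta\psi'(\theta))$, already obtained in the proof of Proposition \ref{prop:mart-l}.

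The step I expect to be the main obstacle is the reduction itself, namely the claim that the two forward pruning kernels coincide as functions of the current tree. This rests on Remark \ref{rem:pruning*}, which asserts only that $\tau^*_\theta(\lambda)$ arises from $\tau^*_0(\lambda)$ by a pruning procedure analogous to that of Section \ref{sec:subgrowth}; one must check that the marks on the skeleton (at rate $\psi''(\eta+\theta)\,\ell(dy)$) and at the branching points (with the densities there prescribed) depend only on the local tree structure and on $\psi,\eta,\theta$, and so are insensitive to the infinite-spine bias carried by $\ct^*$. Once this compatibility is granted, size-biasing by the initial leaf count leaves the transitions unchanged, and the process-level identity follows from the single-time identity together with the Markov property.
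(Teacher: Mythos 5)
Your proof is correct and follows essentially the same route as the paper's: reduce to the time-$\theta$ marginal by conditioning on the initial tree (the forward pruning kernels of the two processes coincide), then identify $\tau^*_\theta(\lambda)$ as the leaf-size-biased version of the Galton--Watson tree $\tau_\theta(\lambda)$ via Remark \ref{rem:GW}, with normalisation $\E^{\psi,\lambda}[L_\theta(\lambda)]=\psi_\theta(\eta)/(\eta\psi'(\theta))$. The paper's proof is terser but rests on exactly these two steps; your explicit matching of Remark \ref{rem:GW} with Theorem \ref{theo:t*_q} and Corollary \ref{cor:t*_q} merely spells out what the paper leaves implicit.
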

\begin{proof} By considering $\E^{\psi,\lambda}\left[F(\cs_\theta)|\tau_\theta(\lambda)\right]$
instead of $F(\cs_\theta(\lambda))$ and $\E^{*,\psi}\left
  [F(\cs^{*}_{\theta}(\lz)) |\tau_\theta^*(\lambda)\right] $  instead of
$F(\cs^{*}_{\theta}(\lz)) $,  one can assume that $F$ is measurable defined on
$\T$.   Since   the   life   times   of   all   individuals   in
  $\tau_{\theta}(\lz)$   and   $\tau_{\theta}^*(\lz)$   have  the   same
  distribution, we only need to  consider the distribution of the number
  of  offsprings.   This is  equivalent  to  consider the  corresponding
  discrete  (or  size-biased) Galton-Watson  tree.  Then  the result  follows
  from Remark \ref{rem:GW}. 
\end{proof}

Recall that the function $\theta\mapsto \bar \theta$  is defined by \reff{eq:barthe}. If
$\theta_\lambda\in \Theta^\psi$, then we deduce from \reff{eq:dif-theta}
that:
\[
 \bar{\theta}_{\lz}-\theta_{\lz}=\eta. 
\]
In particular the function $f$ defined by:
\[
f_\lambda(r)=\frac{1}{\eta}
\left(1-\frac{\psi'(r)}{\psi'(\bar{r})}\right)\ind_{\{r\in(\theta_{\lz},0)\}}
\]
is a probability density. the corresponding cumulative distribution is
$F_\lambda$ defined on $[\theta_\lambda,0)$ by:
\[
F_{\lz}(r)=1- \frac{\bar r -r}{\eta}=\P^{\psi,\lz}(A_{\lz}<r).
\]

\begin{proposition}\label{disequ}
  Let   $\lambda>0$    and   $\eta=\psi^{-1}(\lambda)$.    Assume   that
  $\theta_\lambda\in  \Theta^\psi$. Under  $\P^{*,\psi}$, let  $U$  be a
  random   variable  with   density  $f_\lambda$   and   independent  of
  $\cs^*_0(\lambda)$.        Then       $\cs_{A_{\lz}}(\lz)$       under
  $\P^{\psi,\lambda}$      has     the     same      distribution     as
  $\cs^*_{\bar{U}}\left(\psi\left(\eta  F_{\lz}(U)\right)\right)$  under
  $\P^{*,\psi}$.
\end{proposition}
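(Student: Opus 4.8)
The plan is to condition on the value of the ascension time $A_\lz$ and to glue together two facts already at our disposal: the conditional law of $\cs_{A_\lz}(\lz)$ given $\{A_\lz=\theta\}$ provided by Corollary \ref{cor:S-l-S-L}, and the size-biasing identity of Proposition \ref{Peqdis}, which trades a Galton--Watson sub-tree weighted by its number of leaves for the corresponding sub-tree of the infinite CRT. Once the conditional laws are identified, the unconditional statement follows by integrating against the law of $A_\lz$.

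First I would fix $\theta\in(\theta_\lz,0)$ and set $\eta_\theta=\eta-\bar\theta+\theta$, as in Corollary \ref{cor:S-l-S-L}. Since $\psi$ is critical and $\theta<0$, the reflected parameter $\bar\theta=\psi^{-1}(\psi(\theta))$ is positive, and by \reff{eq:dif-theta} and \reff{eq:disA1} we have $\bar\theta-\theta=\psi_\theta^{-1}(0)\in(0,\eta]$, so that $\eta_\theta\in[0,\eta)$ and $\psi^{-1}(\psi(\eta_\theta))=\eta_\theta$; these range checks are exactly what make the subsequent substitutions legitimate. Corollary \ref{cor:S-l-S-L} then expresses $\E^{\psi,\lz}[F(\cs_{A_\lz}(\lz))\mid A_\lz=\theta]$ as $\frac{\eta_\theta\,\psi'(\bar\theta)}{\psi_{\bar\theta}(\eta_\theta)}$ times the $L_{\bar\theta}(\psi(\eta_\theta))$-weighted expectation of $F(\cs_{\bar\theta}(\psi(\eta_\theta)))$, taken under the probability attached to the intensity $\psi(\eta_\theta)$, that is under $\P^{\psi,\psi(\eta_\theta)}$ (this is how the symbol $\P^{\psi,\lz}$ in that corollary must be read, as its proof makes plain through $\N^\psi[M_{\psi(\eta_\theta)}\geq 1]=\eta_\theta$). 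Applying Proposition \ref{Peqdis} with $\theta$ replaced by $\bar\theta>0$ and $\lz$ by $\psi(\eta_\theta)>0$, so that the role of $\eta$ is played by $\eta_\theta$, removes precisely this prefactor and leaf-weight and turns the sub-tree process into that of the infinite CRT, giving $\E^{\psi,\lz}[F(\cs_{A_\lz}(\lz))\mid A_\lz=\theta]=\E^{*,\psi}[F(\cs^*_{\bar\theta}(\psi(\eta_\theta)))]$.

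Next I would record the elementary identity $\eta_\theta=\eta\,(1-\tfrac{\bar\theta-\theta}{\eta})=\eta\,F_\lz(\theta)$, immediate from the definition of $F_\lz$, so that the conditional law reads $\E^{*,\psi}[F(\cs^*_{\bar\theta}(\psi(\eta\,F_\lz(\theta))))]$. Finally I would integrate against $\P^{\psi,\lz}(A_\lz\in d\theta)$. The hypothesis $\theta_\lz\in\Theta^\psi$ yields $\bar\theta_\lz-\theta_\lz=\eta$ via \reff{eq:dif-theta}, hence $F_\lz(\theta_\lz)=0$ and $\int_{\theta_\lz}^0 f_\lz=1$; thus $A_\lz$ carries no atom at $\theta_\lz$ and admits on $(\theta_\lz,0)$ exactly the density $f_\lz$ of $U$. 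Integrating the conditional identity and invoking Fubini together with the independence of $U$ and the infinite CRT under $\P^{*,\psi}$ then identifies $\E^{\psi,\lz}[F(\cs_{A_\lz}(\lz))]$ with $\E^{*,\psi}[F(\cs^*_{\bar U}(\psi(\eta\,F_\lz(U))))]$ for arbitrary $F$, which is the claim.

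I expect the only genuine difficulty to be bookkeeping rather than analysis: one must check that the time, the intensity, the weighting measure and the multiplicative constant produced by Corollary \ref{cor:S-l-S-L} match exactly those consumed by Proposition \ref{Peqdis}. This hinges on the two algebraic relations $\psi_{\bar\theta}(\eta_\theta)=\psi_\theta(\eta)$ and $\eta_\theta=\eta\,F_\lz(\theta)$, and on verifying that $\bar\theta$ and $\eta_\theta$ stay in the ranges where both statements are valid. Everything else is a routine conditioning-and-averaging argument.
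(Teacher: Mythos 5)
Your proposal is correct and follows essentially the same route as the paper's proof: apply Corollary \ref{cor:S-l-S-L} to identify the conditional law given $A_\lambda=\theta$, cancel the prefactor and the leaf-weighting via Proposition \ref{Peqdis} applied at $(\bar\theta,\psi(\eta_\theta))$, substitute $\eta_\theta=\eta F_\lambda(\theta)$, and integrate against the density $f_\lambda$ of $A_\lambda$. Your careful reading of the measure in Corollary \ref{cor:S-l-S-L} (as the conditioning matching intensity $\psi(\eta_\theta)$, so that the constant $\eta_\theta$ is the correct normalization) is exactly the bookkeeping needed for the two statements to fit together.
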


\begin{proof}
Using Corollary \ref{cor:S-l-S-L}, with $\eta_\theta=\eta-\bar \theta
+\theta$, we get:
\begin{align*}
\E^{\psi,\lz}[F(\cs_{A_{\lz}}(\lz))|A_{\lz}=\theta]
&=\frac{\eta_ \theta \psi'(\bar{\theta})}{\psi_{\bar
    \theta}(\eta_\theta)}
\E^{\psi,\lambda}\left[F(\cs_{\bar{\theta}}(\psi(\eta_\theta)))
L_{\bar{\theta}}(\psi(\eta_\theta))\right]\\
&=\E^{*,\psi}[F(\cs^{*}_{\bar \theta}(\psi(\eta_\theta)))]   .
\end{align*}
Using \reff{eq:disA2} and $\eta_\theta=\eta F_\lambda(\theta)$, we get:
\begin{align*}
&=  \int_{\theta_\lambda}^0
\E^{\psi,\lz}[F(\cs_{A_{\lz}}(\lz))|A_{\lz}=\theta]\, \P^{\psi,\lambda}
(A_\lambda\in d\theta)\\
&= \int_{\theta_{\lz}}^0  \E^{*,\psi}
[F(\cs^*_{\bar \theta}(\psi(\eta_\theta)))]
 f_\lambda (\theta) \, d\theta\\
&=
  \int_{\theta_{\lz}}^0
 \E^{*,\psi}[F(\cs^*_{\bar{\theta}}(\psi(\eta F_{\lz}(\theta))))]
 f_\lambda (\theta) \, d\theta\\
&=
 \E^{*\lambda} [F(\cs^*_{\bar{U}}(\psi(\eta F_{\lz}(U))))].
\end{align*}
\end{proof}



\end{document}